\pgfplotsset{compat=1.18}
\definecolor{Egraphcolor}{RGB}{0,160,160} 
\definecolor{fluxcolor}{RGB}{219, 48, 122}
\newcommand\fluxcolor[1]{{\textcolor{fluxcolor}{#1}}}
\newtheoremstyle{dotless}{}{}{\itshape}{}{\bfseries}{}{ }{}
\theoremstyle{dotless}
\newtheorem{theorem}{Theorem}[section]
\newtheorem{lemma}[theorem]{Lemma}
\newtheorem{corollary}[theorem]{Corollary}
\newtheorem{proposition}[theorem]{Proposition}
\newtheorem{theoremalphabetic}{Theorem}
\newtheoremstyle{notitalic}{}{}{}{}{\bfseries}{}{ }{}
\theoremstyle{notitalic}
\newtheorem{definition}[theorem]{Definition}
\newtheorem{remark}[theorem]{Remark}
\newtheorem{example}[theorem]{Example}
\newcommand{\st}{\mid} 
\newcommand\mc[1]{\mathcal{#1}}  
\newcommand{\rr}{\ensuremath{\mathbb{R}}}   
\renewcommand{\phi}{\varphi}
\DeclareMathOperator{\spn}{span}
\DeclareMathOperator{\grad}{grad}
\DeclareMathOperator{\sgn}{sgn}
\DeclareMathOperator{\comp}{comp}
\DeclareMathOperator{\relint}{relint}
\newcommand{\kk}{\kappa}
\newcommand{\vv}[1]{{\boldsymbol{#1}}} 
\newcommand{\rrp}{\rr_{\geq 0}}
\newcommand{\rrpp}{\rr_{>0}}
\newcommand{\xx}{\vv x}
\newcommand{\yy}{\vv y}
\newcommand{\Hwr}{\mc{H}^{\operatorname{wr}}}
\newcommand{\Keq}{\mc{K}^{\operatorname{eq}}}
\newcommand{\Kt}{\mc{K}^{\operatorname{t}}}
\newcommand{\Kdt}{\mc{K}^{\operatorname{dt}}}
\newcommand{\Feq}{\mc{F}^{\operatorname{eq}}}
\newcommand{\Ft}{\mc{F}^{\operatorname{t}}}
\newcommand{\Fdt}{\mc{F}^{\operatorname{dt}}}
\newcommand{\0}{\mathsf{0}}
\newcommand{\X}{\mathsf{X}}
\begin{document}

\title{
A flux-based approach for analyzing the\\ disguised toric locus of reaction networks}

\author[1,2]{Bal\'azs Boros}
\author[3,4]{Gheorghe Craciun}
\author[5]{Oskar Henriksson}
\author[6]{Jiaxin Jin}
\author[3]{Diego~Rojas~La~Luz}

\affil[1]{\small Bolyai Institute, University of Szeged}
\affil[2]{\small National Laboratory for Health Security, University of Szeged}
\affil[3]{\small Department of Mathematics, University of Wisconsin-Madison}
\affil[4]{\small Department of Biomolecular Chemistry, University of Wisconsin-Madison}
\affil[5]{\small Max Planck Institute of Molecular Cell Biology and Genetics, Dresden}
\affil[6]{\small Department of Mathematics, University of Louisiana at Lafayette}

\date{\vspace{-2em}} 

\maketitle

\footnotetext{\orcidlink{0000-0001-5417-4565} 0000-0001-5417-4565; \Letter\ \href{mailto:boros.balazs@szte.hu}{boros.balazs@szte.hu}}
\footnotetext{\orcidlink{0000-0003-1344-3890} 0000-0003-1344-3890; \Letter\ \href{mailto:craciun@wisc.edu}{craciun@wisc.edu}}
\footnotetext{\orcidlink{0000-0002-2742-0027} 0000-0002-2742-0027; \Letter\ \href{mailto:oskar.henriksson@mpi-cbg.de}{oskar.henriksson@mpi-cbg.de}}
\footnotetext{\orcidlink{0000-0003-0129-7590} 0000-0003-0129-7590; \Letter\ \href{mailto:jiaxin.jin@louisiana.edu}{jiaxin.jin@louisiana.edu}}
\footnotetext{\orcidlink{0009-0005-1626-4857} 0009-0005-1626-4857; \Letter\ \href{rojaslaluz@wisc.edu}{rojaslaluz@wisc.edu}}

\begin{abstract}
\noindent
Dynamical systems with polynomial right-hand sides are very important in various applications, e.g., in biochemistry and population dynamics. The mathematical study of these dynamical systems is challenging due to the possibility of multistability, oscillations, and chaotic dynamics. One important tool for this study is the concept of \emph{reaction systems}, which are dynamical systems generated by reaction networks for some choices of parameter values.  
Among these, {\em disguised toric systems} are remarkably stable: they have a unique attracting fixed point, and \emph{cannot} give rise to oscillations or chaotic dynamics. 
The computation of the set of parameter values for which a network gives rise to disguised toric systems (i.e., the {\em disguised toric locus of the network}) is an important but difficult task. We introduce new ideas based on {\em network fluxes} for studying the disguised toric locus. We prove that the disguised toric locus of any network $G$ is a contractible manifold with boundary, and introduce an associated graph $G^{\max}$ that characterizes its interior. These theoretical tools allow us, for the first time, to compute the full disguised toric locus for many networks of interest. 
\end{abstract}

\vskip3cm

\phantom{.}

\section{Introduction}

Mathematical models for many important questions from biochemistry, ecology, population dynamics, and the study of infectious diseases give rise to complex nonlinear dynamical systems where the variables are (nonnegative) concentrations or populations of interest. It is very common in practice that the right-hand side of these dynamical systems is given by polynomials. Polynomial dynamical systems on the nonnegative orthant can exhibit any of the complex dynamics of general polynomial dynamical systems, such as multiple basins of attraction, periodic trajectories, or chaos, and their mathematical analysis is very challenging. On the other hand, the dynamics of any polynomial dynamical systems on the positive orthant can be obtained by using {\em reaction systems}, which are dynamical systems generated by reaction networks for some choices of rate constant parameters, according to the law of mass action.  

In general, the mathematical analysis of reaction systems can also be very challenging. A remarkable exception is reaction systems that have a {\em vertex-balanced equilibrium}~\cite{horn:jackson:1972}, which are called \emph{toric systems} (also known as \emph{complex-balanced systems} or \emph{vertex-balanced systems}). Toric systems have been introduced in the seminal work of Horn and Jackson, and they enjoy a unique locally stable equilibrium within each linear invariant subspace (that, under certain additional assumptions, is globally asymptotically stable), and cannot exhibit oscillations or chaotic dynamics~\cite{horn:jackson:1972}. This can be seen by showing that for each vertex-balanced equilibrium $\vv x^*$, we have a global strict Lyapunov function, given by the Horn--Jackson function
\begin{align*}
L(\vv x) = \sum_{i=1}^n x_i (\log \tfrac{x_i}{x_i^*} - 1).
\end{align*}

Whether a network $G$ admits a vertex-balanced equilibrium usually depends on the rate constant parameters. The \emph{toric locus} $\Kt(G)$ is the set of rate constants under which the network $G$ admits a vertex-balanced equilibrium. 
In \cite{craciun:dickenstein:shiu:sturmfels:2009}, the set $\Kt(G)$ has been characterized algebraically. For $\Kt(G)$ to be nonempty, the network $G$ needs to be weakly reversible, meaning every component of $G$ is strongly connected. For a weakly reversible network $G$, the codimension of $\Kt(G)$ equals the {\em deficiency} of $G$, a nonnegative integer that measures the affine dependency of the vertices of $G$. 

Yet, most real-world networks have high deficiency or are not weakly reversible, making them ineligible for vertex balancing. However, many of these systems can be \emph{dynamically equal} to a vertex-balanced system (possibly on another network), in which case the stability properties of vertex-balanced systems are shared by the non-vertex-balanced system. This notion first appeared in \cite{craciun:jin:yu:2020}, and gave rise to the notion of \emph{disguised toric locus}, introduced in \cite{brustenga:craciun:sorea:2022} and further analyzed in recent years in~\cites{haque:satriano:sorea:yu:2023, craciun:deshpande:jin:2024a, craciun:deshpande:jin:2024b, craciun:deshpande:jin:2025a, craciun:deshpande:jin:2025b}. For example, it has been shown that the disguised toric locus is path-connected \cite{craciun:deshpande:jin:2024b},  is invariant under affine transformations of the network \cite{haque:satriano:sorea:yu:2023}, and there exist methods for calculating its dimension \cite{craciun:deshpande:jin:2025a}. 
Often, the disguised toric locus of a network is significantly larger than its toric locus. However, explicitly computing parametric or implicit semialgebraic descriptions of the disguised toric locus remains a big challenge. Previously, this has only been achieved for certain families of small networks \cite{brustenga:craciun:sorea:2022}.

In this paper, we propose a new method for studying and computing the disguised toric locus based on the concept of \emph{fluxes}, which are the rates of the reactions evaluated at a particular state. We consider the \emph{disguised toric flux cone} $\Fdt(G)$, which is the set of reaction fluxes on $G$ that are realizable by some vertex-balanced fluxes (on a possibly different network). Unlike the semialgebraic set $\Kdt(G)$, the cone $\Fdt(G)$ is always a polyhedral object, which can be computed through linear programming. It turns out that the topology of $\Kdt(G)$ is fully captured by the topology of $\Fdt(G)$ in the sense of the following result, which extends a homeomorphic map previously constructed in the \emph{toric} setting in \cite{craciun:jin:sorea:2024} to the \emph{disguised toric} setting.

\begin{theoremalphabetic}[{\Cref{thm:homeo_Psi} and \Cref{cor:Kdt_G}}]
\label{thmA:homeomorphism}
Let $G$ be a network with $\Kdt(G)\neq\emptyset$, and denote by $\mc{S}$ the stoichiometric subspace of $G$. Suppose that the kinetic subspace coincides with the stoichiometric subspace for all rate constants.
Then, for any $\xx_0\in\rrpp^n$, the following map is a homeomorphism:
\begin{align*}
\begin{split}
\Psi\colon (\xx_0 + \mc{S})_{>0} \times \Fdt(G) &\to \Kdt(G), \\
\Psi(\vv x, \vv\beta) &= (\beta_{\vv y \to \vv y'} \xx^{-\yy})_{\yy \rightarrow \yy' \in E}.
\end{split}
\end{align*}
In particular, $\Kdt(G)$ is a contractible manifold with boundary of dimension $\dim \mc{S}+\dim\Fdt(G)$.
\end{theoremalphabetic}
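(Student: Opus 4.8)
The plan is to read $\Psi$ through the identity $\kk_{\yy\to\yy'} = \beta_{\yy\to\yy'}\xx^{-\yy} \iff \beta_{\yy\to\yy'} = \kk_{\yy\to\yy'}\xx^{\yy}$, so that $\vv\beta$ is exactly the vector of mass-action fluxes of the system $(G,\kk)$ evaluated at the state $\vv x$. I would prove the statement by exhibiting $\Psi$ as a continuous bijection whose inverse is also continuous. The natural candidate for the inverse sends a rate-constant vector $\kk\in\Kdt(G)$ to the pair $(\vv x^*(\kk),\,(\kk_{\yy\to\yy'}(\vv x^*)^{\yy})_{\yy\to\yy'\in E})$, where $\vv x^*(\kk)$ is the unique positive equilibrium of $(G,\kk)$ lying in the prescribed compatibility class $(\xx_0+\mc{S})_{>0}$. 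Everything then rests on three ingredients: a flux-level characterization of $\Kdt(G)$, uniqueness of the positive equilibrium in each class, and continuous dependence of that equilibrium on $\kk$.

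For well-definedness and surjectivity I would exploit that dynamical equivalence of two mass-action systems amounts to a source-by-source equality of their net reaction vectors, i.e., the sums $\sum_{\yy_0\to\yy'}\kk_{\yy_0\to\yy'}(\yy'-\yy_0)$ must agree for every source complex $\yy_0$. Multiplying or dividing these linear relations by the monomial $\xx^{\yy_0}$ converts them between the rate-constant picture and the flux picture without changing their content. Concretely, if $\vv\beta\in\Fdt(G)$ is witnessed by a vertex-balanced flux $\vv\beta'$ on some network $G'$ with the same net reaction vectors, then setting $\kk=\Psi(\vv x,\vv\beta)$ and $\kk'_{\yy\to\yy'}=\beta'_{\yy\to\yy'}\xx^{-\yy}$ makes $(G',\kk')$ complex-balanced with complex-balanced equilibrium $\vv x$ and with the same mass-action vector field as $(G,\kk)$; hence $\kk\in\Kdt(G)$. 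Running this argument in reverse—starting from a complex-balanced realization of a given $\kk\in\Kdt(G)$ and evaluating its flux at the unique positive equilibrium $\vv x^*$ of $(G,\kk)$ in $(\xx_0+\mc{S})_{>0}$, which is complex-balanced for that realization—produces a preimage and shows surjectivity. The hypothesis that the kinetic subspace equals the stoichiometric subspace is precisely what guarantees that these equilibria lie in, and are unique within, the classes $\xx_0+\mc{S}$, so that the target class is the correct one regardless of which realizing network $G'$ is used.

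Injectivity would then follow from uniqueness: if $\Psi(\vv x_1,\vv\beta_1)=\Psi(\vv x_2,\vv\beta_2)=\kk$, both $\vv x_1$ and $\vv x_2$ are positive equilibria of $(G,\kk)$ inside the same compatibility class. Since a disguised toric system shares its vector field with a complex-balanced system, its positive equilibria inherit the toric structure and meet each class exactly once; thus $\vv x_1=\vv x_2$, and then $\beta_{1,\yy\to\yy'}=\kk_{\yy\to\yy'}\xx_1^{\yy}=\beta_{2,\yy\to\yy'}$ forces $\vv\beta_1=\vv\beta_2$. Continuity of $\Psi$ is immediate, and I would derive continuity of the inverse from continuous dependence of $\vv x^*(\kk)$ on $\kk$, via the implicit function theorem applied to the equilibrium equations (or, equivalently, by a properness argument combined with the established bijection). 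The ``in particular'' clause is then a formal consequence: $(\xx_0+\mc{S})_{>0}$ is homeomorphic to $\rr^{\dim\mc{S}}$, and $\Fdt(G)$, being a closed convex polyhedral cone, is star-shaped hence contractible and is a manifold with boundary (homeomorphic to a Euclidean space, or to a closed half-space within its span); the product therefore is a contractible manifold with boundary of dimension $\dim\mc{S}+\dim\Fdt(G)$.

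The main obstacle I anticipate is the continuity of the inverse across the strata of $\Kdt(G)$ where the realizing network $G'$ changes. Within the region realized by a single maximal network the equilibrium map is smooth by the implicit function theorem, but on the boundary—where $\vv\beta$ moves to a proper face of the cone $\Fdt(G)$ and the supporting complex-balanced realization degenerates—one must verify that $\vv x^*(\kk)$ does not jump. This is exactly the point at which the passage from the toric to the disguised setting is genuinely harder than in \cite{craciun:jin:sorea:2024}, and where the auxiliary graph $G^{\max}$ governing the interior of the locus is expected to do the essential bookkeeping.
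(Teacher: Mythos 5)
Your construction of the bijection is exactly the paper's: well-definedness and surjectivity by converting between rate constants and fluxes source-by-source via multiplication by $\xx^{\yy}$, injectivity from uniqueness of the positive equilibrium in each class, and the implicit function theorem for continuity of the inverse. The genuine gap is the step you yourself flag as the ``main obstacle'' and then leave open: you assert that one ``must verify that $\vv x^*(\vv\kk)$ does not jump'' across strata where the realizing network changes, and you defer this to bookkeeping by $G^{\max}$. That verification is never supplied, and moreover the proposed mechanism is misdirected. The paper's resolution requires no stratification and no $G^{\max}$: by \Cref{thm:nice_dynamics_disg}(c), \emph{every} $\vv\kk\in\Kdt(G)$ --- including those whose flux vector lies on a proper face of $\Fdt(G)$, where the witnessing vertex-balanced realization lives on a smaller graph $H$ --- has its equilibrium $\xx^*_{\vv\kk}$ linearly stable relative to $(\xx^*_{\vv\kk}+\mc{S}_H)_{>0}$, and the kinetic-subspace hypothesis forces $\mc{S}_H=\mc{S}$ for any realizing graph $H$. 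Hence $\xx^*_{\vv\kk}$ is a nondegenerate zero of $\vv f_{(G,\vv\kk)}$ restricted to $(\xx_0+\mc{S})_{>0}$ at \emph{every} point of $\Kdt(G)$, uniformly; the equation being solved never mentions the realizing network, which enters only to certify nondegeneracy. The implicit function theorem therefore gives, near each $\vv\kk_0\in\Kdt(G)$, a continuous solution map defined on an ambient neighborhood in $\rrpp^{|E|}$, and uniqueness of the equilibrium in the class (\Cref{thm:nice_dynamics_disg}(a)) identifies that solution with $\vv\kk\mapsto\xx^*_{\vv\kk}$ on $\Kdt(G)$, so no jump can occur and no face-by-face analysis is needed. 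In the paper, $G^{\max}$ plays no role in \Cref{thm:homeo_Psi}; it is used only for \Cref{cor:Kdt_G}(d), i.e., for identifying the manifold interior of $\Kdt(G)$ as $\Kdt(G,G^{\max})$, which is a separate statement from the homeomorphism itself.

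A secondary inaccuracy concerns the ``in particular'' clause: $\Fdt(G)$ is \emph{not} a closed polyhedral cone, and it need not be homeomorphic to $\rr^d$ or to a closed half-space. By \Cref{lem:polyhedral_cones}(a) it equals $(\overline{\Fdt(G)})_{>0}$, that is, a closed pointed polyhedral cone with all faces meeting the coordinate hyperplanes removed; such a set is relatively closed in $\rrpp^{|E|}$ but typically not closed in $\rr^{|E|}$, and its boundary structure can be more complicated than that of a half-space. The conclusion you want still holds, but via the paper's argument: $\Fdt(G)$ is relatively open in $\overline{\Fdt(G)}$ (being the intersection with the open orthant), hence an open subset of a manifold with boundary and therefore itself a manifold with boundary, and it is convex and nonempty, hence contractible; the product with $(\xx_0+\mc{S})_{>0}\cong\rr^{\dim\mc{S}}$ is then a contractible manifold with boundary of dimension $\dim\mc{S}+\dim\Fdt(G)$.
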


A different homeomorphism involving $\Fdt(G)$ and $\Kdt(G)$ has been discussed in \cites{craciun:deshpande:jin:2024a,craciun:deshpande:jin:2025b} and has been used to calculate the dimension of $\Kdt(G)$.
The simpler homeomorphism in \Cref{thmA:homeomorphism} provides both a theoretical framework for analyzing $\Kdt(G)$ through the polyhedral structure of $\Fdt(G)$, and a new computational method that splits a hard quantifier elimination problem into a linear programming part and a new, simpler quantifier elimination. We explore both of these perspectives in this work. 

\begin{example}
\label{ex:running_example}
As a running example throughout the paper, we will consider the \emph{partly reversible square} $(G,\vv\kk)$ shown below, where we have also displayed the mass-action differential equation associated with $(G,\vv\kk)$, as well as the fluxes (in magenta).
\begin{align*}\label{tikz:running_intro}
\begin{aligned}
\begin{tikzpicture}[scale=2]

\tikzset{mybullet/.style={inner sep=1.5pt,outer sep=5pt,draw,fill,Egraphcolor,circle}};
\tikzset{myarrow/.style={arrows={-stealth},very thick,Egraphcolor}};

\draw [step=1, gray, very thin] (0,0) grid (1.25,1.25);
\draw [ -, black] (0,0)--(1.25,0);
\draw [ -, black] (0,0)--(0,1.25);

\node[mybullet]  (0) at (0,0) {};
\node[mybullet]  (X) at (1,0) {};
\node[mybullet] (XY) at (1,1) {};
\node[mybullet]  (Y) at (0,1) {};

\node[below left]  at  (0) {$\0$};
\node[below right] at  (X) {$\X_1$};
\node[above right] at (XY) {$\X_1 + \X_2$};
\node[above left]  at  (Y) {$\X_2$};

\draw[myarrow]                                 (0) to node[below] {\textcolor{black}{$\kappa_1$}} (X);
\draw[myarrow,transform canvas={xshift=2pt}]   (X) to node[right] {\textcolor{black}{$\kappa_2$}} (XY);
\draw[myarrow,transform canvas={xshift=-2pt}] (XY) to node[left]  {\textcolor{black}{$\kappa_5$}} (X);
\draw[myarrow]                                (XY) to node[above] {\textcolor{black}{$\kappa_3$}} (Y);
\draw[myarrow,transform canvas={xshift=-2pt}]  (Y) to node[left]  {\textcolor{black}{$\kappa_4$}} (0);
\draw[myarrow,transform canvas={xshift=2pt}]   (0) to node[right] {\textcolor{black}{$\kappa_6$}} (Y);

\node at (0.5,1.5) {$(G,\vv\kappa)$};

\node[right] at (1.75,0.5) {$\begin{aligned}
\frac{\mathrm{d}x_1}{\mathrm{d}t} &= \kappa_1 - \kappa_3 x_1x_2 \\
\frac{\mathrm{d}x_2}{\mathrm{d}t} &= \kappa_2 x_1 - \kappa_5 x_1x_2  + \kappa_6 - \kappa_4 x_2 
\end{aligned}$};

\begin{scope}[shift={(5.5,0)}]

\draw [step=1, gray, very thin] (0,0) grid (1.25,1.25);
\draw [ -, black] (0,0)--(1.25,0);
\draw [ -, black] (0,0)--(0,1.25);

\node[mybullet]  (0) at (0,0) {};
\node[mybullet]  (X) at (1,0) {};
\node[mybullet] (XY) at (1,1) {};
\node[mybullet]  (Y) at (0,1) {};

\node[below left]  at  (0) {$\0$};
\node[below right] at  (X) {$\X_1$};
\node[above right] at (XY) {$\X_1 + \X_2$};
\node[above left]  at  (Y) {$\X_2$};

\draw[myarrow]                                 (0) to node[below] {\fluxcolor{$\beta_1$}} (X);
\draw[myarrow,transform canvas={xshift=2pt}]   (X) to node[right] {\fluxcolor{$\beta_2$}} (XY);
\draw[myarrow,transform canvas={xshift=-2pt}] (XY) to node[left]  {\fluxcolor{$\beta_5$}} (X);
\draw[myarrow]                                (XY) to node[above] {\fluxcolor{$\beta_3$}} (Y);
\draw[myarrow,transform canvas={xshift=-2pt}]  (Y) to node[left]  {\fluxcolor{$\beta_4$}} (0);
\draw[myarrow,transform canvas={xshift=2pt}]   (0) to node[right] {\fluxcolor{$\beta_6$}} (Y);

\node at (0.5,1.5) {$(G,\fluxcolor{\vv\beta})$};
    
\end{scope}

\node at (-0.3,0.5) {\phantom{0}};
\draw[rounded corners, lightgray] (current bounding box.south west) rectangle (current bounding box.north east);

\end{tikzpicture}
\end{aligned}
\end{align*}
The connection between the rate constant vector $\vv\kk$ and the flux vector $\vv\beta$ is as follows:
\begin{align*}
\beta_1 = \kk_1, \quad \beta_2 = \kk_2 x_1, \quad \beta_3 = \kk_3 x_1 x_2, \quad \beta_4 = \kk_4 x_2, \quad \beta_5 = \kk_5 x_1 x_2, \quad \beta_6 = \kk_6.
\end{align*}
We find that the toric flux cone $\Ft(G)$, the disguised toric flux cone $\Fdt(G)$, the toric locus $\Kt(G)$, and the disguised toric locus $\Kdt(G)$ are given by
\begin{align*}
\Ft(G) &= \{\vv\beta \in \rrpp^6 \st \beta_1 = \beta_3,\;\; \beta_2 + \beta_6 = \beta_4 + \beta_5,\;\; \beta_4 - \beta_6 = \beta_1 \},\\
\Fdt(G) &= \{\vv\beta \in \rrpp^6 \st \beta_1 = \beta_3,\;\; \beta_2 + \beta_6 = \beta_4 + \beta_5,\;\; |\beta_4 - \beta_6| \leq \beta_1 \leq \beta_4 + \beta_5 \},\\
\Kt(G) &= \{\vv\kk \in \rrpp^6 \st \tfrac{\kk_2\kk_4}{\kk_1\kk_3} = (1+\tfrac{\kk_6}{\kk_1}) (1+\tfrac{\kk_5}{\kk_3})\}, \\
\Kdt(G) &= \{\vv\kk \in \rrpp^6 \st (1-\tfrac{\kk_6}{\kk_1}) (1-\tfrac{\kk_5}{\kk_3}) \leq \tfrac{\kk_2\kk_4}{\kk_1\kk_3} \leq (1+\tfrac{\kk_6}{\kk_1}) (1+\tfrac{\kk_5}{\kk_3})\};
\end{align*}
see \Cref{fig:running_plot} for an illustration of (some slices of) $\Ft(G)$, $\Fdt(G)$, $\Kt(G)$ and $\Kdt(G)$.
Notice that the cones $\Ft(G)$ and $\Fdt(G)$ have dimension three and four, respectively. Similarly, the semialgebraic sets $\Kt(G)$ and $\Kdt(G)$ have dimension five and six, respectively. 
The homeomorphism $\rrpp^2 \times \Fdt(G)\to\Kdt(G)$ from \Cref{thmA:homeomorphism} is given by
\[(\xx,\vv\beta) \mapsto (\beta_1, \beta_2 \tfrac{1}{x_1}, \beta_3 \tfrac{1}{x_1 x_2}, \beta_4 \tfrac{1}{x_2}, \beta_5 \tfrac{1}{x_1 x_2}, \beta_6).\qedhere
\]

\begin{figure}[ht]
    \centering
    \input{tikz/1_running_plot_with_cone}
    \caption{Some slices of the toric flux cone $\Ft(G)$ and the disguised toric flux cone $\Fdt(G)$ (left), as well as the toric locus $\Kt(G)$ and the disguised toric locus $\Kdt(G)$ (right) for the  partly reversible square $G$ from \Cref{ex:running_example}.  
    }
\label{fig:running_plot}
\end{figure}
\end{example}

In principle, it might be that for $\vv\kk, \vv\kk' \in \Kdt(G)$ one must use different graphs (say, $H$ and $H'$) to display a vertex-balanced realization of $(G,\vv\kk)$ and $(G,\vv\kk')$. To capture this phenomenon, we use the notation $\Kdt(G,H)$ for the set of rate constants for which $G$ admits a vertex-balanced realization of $G$ with respect to $H$. It was proven in \cite{craciun:jin:yu:2020}*{Theorem~4.8} that it suffices to consider graphs $H$ that are subgraphs of the complete graph $G^{\comp}$ on the source vertices of $G$, so that
\begin{equation}
\label{eq:G_comp_enough_intro}
    \Kdt(G)=\bigcup_{H\subseteq G^{\comp}}\Kdt(G,H).
\end{equation}
In this paper, we sharpen \eqref{eq:G_comp_enough_intro} by constructing a subgraph of $G^{\comp}$ with this property, which we call the \emph{maximal weakly reversible realization graph} $G^{\max}$. We refer to \Cref{sec:disg_flux} for a precise definition, and \Cref{sec:algorithm} for a discussion on how it can be efficiently computed by solving a linear feasibility problem. Additionally, we show that considering only the realizations with respect to $G^{\max}$ is enough to capture the interior of $\Kdt(G)$. 

\begin{theoremalphabetic}
[\Cref{thm:G_max_enough_K} and \Cref{cor:Kdt_G}]
\label{thmB:Gmax}
Let $G$ be a network, and let $G^{\max}$ be the associated maximal weakly reversible realization graph. Then the following statements hold.
\begin{enumerate}[label={\rm(\alph*)}]
    \item The disguised toric locus is given by $\Kdt(G)=\bigcup_{H\subseteq G^{\max}} \Kdt(G,H)$.
    \item The manifold interior of $\Kdt(G)$ is $\Kdt(G,G^{\max})$.
\end{enumerate}
\end{theoremalphabetic}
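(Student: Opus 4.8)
The plan is to prove both parts of \Cref{thmB:Gmax} by leveraging the structural decomposition \eqref{eq:G_comp_enough_intro} together with the homeomorphism $\Psi$ from \Cref{thmA:homeomorphism}, which translates topological questions about $\Kdt(G)$ into questions about the polyhedral cone $\Fdt(G)$. The key organizing principle is that $G^{\max}$ should be defined so that it \emph{dominates} every weakly reversible subgraph of $G^{\comp}$ in the sense of flux realizability: every vertex-balanced flux realizable via some $H \subseteq G^{\comp}$ is also realizable via $G^{\max}$, while $G^{\max}$ itself is weakly reversible and uses the full support of edges that can appear in \emph{any} realization.

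For part (a), I would first establish the inclusion $\bigcup_{H \subseteq G^{\max}} \Kdt(G,H) \subseteq \Kdt(G)$, which is immediate since each $\Kdt(G,H) \subseteq \Kdt(G)$ by definition. The reverse inclusion is the substantive direction: starting from \eqref{eq:G_comp_enough_intro}, it suffices to show that for every $H \subseteq G^{\comp}$ we have $\Kdt(G,H) \subseteq \bigcup_{H' \subseteq G^{\max}} \Kdt(G,H')$. Here I would use the defining maximality property of $G^{\max}$: any vertex-balanced flux on $H$ corresponds to a point of $\Fdt(G)$, and since the edge set of $G^{\max}$ contains (the sources-complete closure of) all such realizing edges, the same flux is realized by a weakly reversible subgraph of $G^{\max}$. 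Translating back through the flux-to-rate-constant correspondence $\beta_{\yy\to\yy'}=\kk_{\yy\to\yy'}\xx^{\yy}$ then yields the claim.

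For part (b), the strategy is to identify the manifold interior on both sides of the homeomorphism. Since $\Psi$ is a homeomorphism onto $\Kdt(G)$ and a product of a (full-dimensional, open) factor $(\xx_0+\mc S)_{>0}$ with the cone $\Fdt(G)$, the manifold interior of $\Kdt(G)$ corresponds under $\Psi$ to $(\xx_0+\mc S)_{>0} \times \relint \Fdt(G)$, where $\relint$ denotes the relative interior of the cone. Thus the task reduces to showing two things: first, that $\relint \Fdt(G)$ consists exactly of those fluxes realizable as genuinely vertex-balanced fluxes supported on \emph{all} of $G^{\max}$ (i.e.\ with all of its edges carrying strictly positive flux), and second, that these are precisely the fluxes coming from $\Kdt(G,G^{\max})$ rather than from any proper subgraph. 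The first point should follow from the construction of $G^{\max}$ as the subgraph collecting exactly the edges that lie in the support of some interior flux; the second is the observation that using a \emph{proper} subgraph $H \subsetneq G^{\max}$ forces some coordinate of $\vv\beta$ to vanish or some defining inequality of $\Fdt(G)$ to hold with equality, landing on the relative boundary of the cone and hence on the topological boundary of $\Kdt(G)$.

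The main obstacle I anticipate is the second point in part (b): proving that $\Kdt(G,G^{\max})$ is \emph{exactly} the interior, with neither missing interior points nor stray boundary points. The forward direction (interior points are realized by $G^{\max}$) relies on $G^{\max}$ being large enough, while the reverse (points realized only by proper subgraphs are boundary points) relies on $G^{\max}$ being the \emph{unique maximal} such graph, so that failing to use all its edges genuinely corresponds to a facet of $\Fdt(G)$. Making this precise requires a careful analysis of how the face lattice of the polyhedral cone $\Fdt(G)$ matches the poset of weakly reversible subgraphs $H \subseteq G^{\comp}$ under the realizability correspondence — in particular showing that the relative interior of $\Fdt(G)$ is attained if and only if the full edge support of $G^{\max}$ is used. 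I would expect to invoke the precise definition of $G^{\max}$ from \Cref{sec:disg_flux} and the fact that $G^{\max}$ is weakly reversible to close this gap, possibly via a dimension-counting argument matching $\dim \Fdt(G)$ with the flux dimension contributed by the full support of $G^{\max}$.
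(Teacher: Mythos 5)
Your part (a) is correct and is essentially the paper's own argument: by \eqref{eq:src_vertices_enough_K} and \Cref{lem:Kdt_G_H_nonempty}(a), any $H\subseteq G^{\comp}$ with $\Kdt(G,H)\neq\emptyset$ lies in $\Hwr(G)$ and is therefore itself a subgraph of $G^{\max}$ by maximality, so no re-realization of fluxes by a different subgraph $H'$ is even needed. Your reduction of part (b) through the homeomorphism $\Psi$ to the flux-level identity $\relint\Fdt(G)=\Fdt(G,G^{\max})$ is also exactly the paper's route (\Cref{cor:Kdt_G}(d) is deduced from \Cref{thm:FdtG_is_clFdtGGmax} together with \Cref{thm:homeo_Psi_H} applied to $H=G^{\max}$). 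The gap lies in how you propose to prove that flux-level identity.

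Your central claim---that realizing a flux via a proper subgraph $H\subsetneq G^{\max}$ ``forces some defining inequality of $\Fdt(G)$ to hold with equality, landing on the relative boundary,'' so that interior fluxes come from $G^{\max}$ \emph{rather than} from any proper subgraph---is false, and the face-lattice/dimension-counting mechanism built on it cannot work. The sets $\Fdt(G,H)$ are in general not faces of $\overline{\Fdt(G)}$ and can meet its relative interior. Concretely, let $G$ be the fully reversible triangle on the collinear vertices $\0,\X_1,2\X_1$, so that $G^{\max}=G^{\comp}=G$. The all-ones flux $\vv\beta$ is vertex-balanced on $G$, hence $\vv\beta\in\Ft(G)\subseteq\Fdt(G,G^{\max})=\relint\Fdt(G)$; yet the same $\vv\beta$ also lies in $\Fdt(G,H)$ for the proper subgraph $H\colon\0\rightleftarrows\X_1\rightleftarrows 2\X_1$, witnessed by the flux $\vv\gamma$ equal to $3$ on all four edges of $H$, which satisfies \eqref{eq:DE-f} and \eqref{eq:VB-f}. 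The geometric reason your picture fails is that the witness pair $(\vv\beta,\vv\gamma)$, extended by zeros, lies on a \emph{proper face} of the lifted cone $\mc{X}(G,G^{\max})$, but linear projection can map a proper face of the lifted cone into the relative interior of the projected cone; so ``not using all edges of $G^{\max}$'' does not correspond to a facet of $\Fdt(G)$. What actually characterizes the relative boundary is failure to be realizable by $G^{\max}$ at all, and proving this requires the two steps you are missing: (i) zero-extension of witnesses gives $\Fdt(G,H)\subseteq\overline{\Fdt(G,G^{\max})}$ for every $H\in\Hwr(G)$ (\Cref{lem:H1_subgraph_H2}), which combined with \Cref{thm:G_max_enough_F} and the relative closedness of $\Fdt(G)$ (\Cref{lem:polyhedral_cones}(a)) yields $\Fdt(G)=(\overline{\Fdt(G,G^{\max})})_{>0}$; and (ii) the key identity $\Fdt(G,G^{\max})=\relint(\overline{\Fdt(G,G^{\max})})$ (\Cref{lem:polyhedral_cones}(b)), which the paper proves not combinatorially but by noting that every point of $\Fdt(G,G^{\max})$ has a \emph{strictly positive} witness pair, that the strictly positive points of $\mc{X}(G,G^{\max})$ are exactly its relative interior, and that linear projections of convex sets carry relative interiors onto relative interiors. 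Without an argument of this type, your proof of part (b) does not close.
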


The key to proving part (b) is to first establish the analogous result for the flux cone $\Fdt(G)$, which states that $\Fdt(G,G^{\max})$ is the relative interior of $\Fdt(G)$ (see \Cref{thm:FdtG_is_clFdtGGmax}), and then apply the homeomorphism in \Cref{thmA:homeomorphism} to transfer the result to $\Kdt(G)$.

\begin{example}
\label{ex:running_example_Gmax}
For the network $G$ in \Cref{ex:running_example}, the graphs $G^{\comp}$ and $G^{\max}$ are as follows:
\begin{align*}
\begin{aligned}
\begin{tikzpicture}[scale=2]

\tikzset{mybullet/.style={inner sep=1.5pt,outer sep=5pt,draw,fill,Egraphcolor,circle}};
\tikzset{myarrow/.style={arrows={-stealth},very thick,Egraphcolor}};

\draw [step=1, gray, very thin] (0,0) grid (1.25,1.25);
\draw [ -, black] (0,0)--(1.25,0);
\draw [ -, black] (0,0)--(0,1.25);

\node[mybullet]  (0) at (0,0) {};
\node[mybullet]  (X) at (1,0) {};
\node[mybullet] (XY) at (1,1) {};
\node[mybullet]  (Y) at (0,1) {};

\draw[myarrow]                                 (0) to node {} (X);
\draw[myarrow,transform canvas={xshift=2pt}]   (X) to node {} (XY);
\draw[myarrow,transform canvas={xshift=-2pt}] (XY) to node {} (X);
\draw[myarrow]                                (XY) to node {} (Y);
\draw[myarrow,transform canvas={xshift=-2pt}]  (Y) to node {} (0);
\draw[myarrow,transform canvas={xshift=2pt}]   (0) to node {} (Y);

\node at (0.5,1.5) {$G$};
\node at (-0.5,0) {\phantom{0}};

\begin{scope}[shift={(2,0)}]

\draw [step=1, gray, very thin] (0,0) grid (1.25,1.25);
\draw [ -, black] (0,0)--(1.25,0);
\draw [ -, black] (0,0)--(0,1.25);

\node[mybullet]  (0) at (0,0) {};
\node[mybullet]  (X) at (1,0) {};
\node[mybullet] (XY) at (1,1) {};
\node[mybullet]  (Y) at (0,1) {};

\draw[myarrow,transform canvas={yshift=-2pt}]  (0) to node {} (X);
\draw[myarrow,transform canvas={yshift= 2pt}]  (X) to node {} (0);
\draw[myarrow,transform canvas={xshift=2pt}]   (X) to node {} (XY);
\draw[myarrow,transform canvas={xshift=-2pt}] (XY) to node {}  (X);
\draw[myarrow,transform canvas={yshift= 2pt}]  (XY) to node {} (Y);
\draw[myarrow,transform canvas={yshift=-2pt}]  (Y) to node {} (XY);
\draw[myarrow,transform canvas={xshift=-2pt}]  (Y) to node {} (0);
\draw[myarrow,transform canvas={xshift=2pt}]   (0) to node {} (Y);
\draw[myarrow,transform canvas={xshift=1.4pt,yshift=-1.4pt}]  (0) to node {} (XY);
\draw[myarrow,transform canvas={xshift=-1.4pt,yshift=1.4pt}] (XY) to node {} (0);
\draw[myarrow,transform canvas={xshift=1.4pt,yshift=1.4pt}]  (X) to node {} (Y);
\draw[myarrow,transform canvas={xshift=-1.4pt,yshift=-1.4pt}]  (Y) to node {} (X);

\node at (0.5,1.5) {$G^{\comp}$};

\end{scope}

\begin{scope}[shift={(4,0)}]

\draw [step=1, gray, very thin] (0,0) grid (1.25,1.25);
\draw [ -, black] (0,0)--(1.25,0);
\draw [ -, black] (0,0)--(0,1.25);

\node[mybullet]  (0) at (0,0) {};
\node[mybullet]  (X) at (1,0) {};
\node[mybullet] (XY) at (1,1) {};
\node[mybullet]  (Y) at (0,1) {};

\draw[myarrow]  (0) to node {} (X);
\draw[myarrow,transform canvas={xshift=2pt}]   (X) to node {} (XY);
\draw[myarrow,transform canvas={xshift=-2pt}] (XY) to node {}  (X);
\draw[myarrow]  (XY) to node {} (Y);
\draw[myarrow,transform canvas={xshift=-2pt}]  (Y) to node {} (0);
\draw[myarrow,transform canvas={xshift=2pt}]   (0) to node {} (Y);
\draw[myarrow,transform canvas={xshift=1.4pt,yshift=-1.4pt}]  (0) to node {} (XY);
\draw[myarrow,transform canvas={xshift=-1.4pt,yshift=1.4pt}] (XY) to node {} (0);

\node at (0.5,1.5) {$G^{\max}$};
\node at (1.5,-0.25) {\phantom{0}};
\end{scope}

\draw[rounded corners, lightgray] (current bounding box.south west) rectangle (current bounding box.north east);

\end{tikzpicture}
\end{aligned}
\end{align*}
In accordance with \Cref{thmB:Gmax}(b) (and the analogous statement for the fluxes, see \Cref{thm:FdtG_is_clFdtGGmax}), it holds that
\begin{align*}
\Fdt(G,G^{\max}) &= \{\vv\beta \in \rrpp^6 \st \beta_1 = \beta_3,\;\; \beta_2 + \beta_6 = \beta_4 + \beta_5,\;\; |\beta_4 - \beta_6| < \beta_1 < \beta_4 + \beta_5 \},\\
\Kdt(G,G^{\max}) &= \{\vv\kk \in \rrpp^6 \st (1-\tfrac{\kk_6}{\kk_1}) (1-\tfrac{\kk_5}{\kk_3}) < \tfrac{\kk_2\kk_4}{\kk_1\kk_3} < (1+\tfrac{\kk_6}{\kk_1}) (1+\tfrac{\kk_5}{\kk_3})\}.
\end{align*}
In \Cref{fig:running_plot}, the interiors of the blue regions are slices of $\Fdt(G,G^{\max})$ and $\Kdt(G,G^{\max})$.
\qedhere
\end{example}

By combining the linearization of the problem provided by \Cref{thmA:homeomorphism}, with the fact that $G^{\max}$ can be used to find the whole interior of $\Kdt(G)$, we outline in \Cref{sec:algorithm} a new three-step strategy for computing $\Kdt(G)$: first find $G^{\max}$, then compute $\Fdt(G, G^{\max})$ (and thus, $\Fdt(G)$), and finally obtain $\Kdt(G)$ through quantifier elimination. In \Cref{sec:examples}, we use this strategy to compute the disguised toric locus for several networks that have previously appeared in the literature, and which would have been out of reach with the algorithm from \cite{brustenga:craciun:sorea:2022}*{Section~8}.
Our examples include the reversible Lotka--Volterra autocatalator \cite{simon:1992}, a network with Bogdanov--Takens bifurcation \cite{banaji:boros:hofbauer:2024b}, the basic clock mechanism \cites{johnston:pantea:donnell:2016,leloup:goldbeter:1999}, a tetrahedron network \cite{johnston:siegel:szederkenyi:2013:mathbiosci}, and a four-dimensional network \cite{craciun:pantea:2008}. These examples demonstrate that $\Kdt(G)$ often has positive measure, while  $\Kt(G)$ has measure zero. In particular, for the reversible Lotka--Volterra autocatalator, we obtain an immediate proof of global stability, which is much shorter than the highly intricate original proof in \cite{simon:1992}.

\subsection*{Structure of the paper}
The rest of this paper is organized as follows. In \Cref{sec:prelim}, we collect the necessary background about mass-action systems, vertex-balancing, and the disguised toric locus. In \Cref{sec:G_max}, we introduce the maximal weakly reversible realization graph. In \Cref{sec:disg_flux}, we define the disguised toric flux cones and prove some of their basic properties. In \Cref{sec:topology}, we study some topological aspects pertaining to the disguised toric locus. In \Cref{sec:algorithm}, we discuss how to utilize our findings to construct an efficient procedure to calculate the disguised toric locus. In \Cref{sec:examples}, we present several interesting examples in detail. Finally, in \Cref{sec:discussion} we make some concluding remarks.

\subsection*{Notation and conventions}
We use the notation $\xx^\yy = x_1^{y_1}\cdots x_n^{y_n}$ for $\xx=(x_1,\dots,x_n)\in\rrpp^n$ and $\yy=(y_1,\dots,y_n)\in\rr^n$. For a set $A\subseteq\rr^n$, we write $A_{>0}$ for the positive part $A\cap\rr^n_{>0}$. Unless stated otherwise, all subsets $A\subseteq\rr^n$ are considered with the Euclidean topology, and we write $\overline{A}$ for the closure in $\rr^n$. For graphs $H$ and $G$, we denote the relation of $H$ being a subgraph of $G$ as $H\subseteq G$.

A (closed) \emph{polyhedral cone} is a set of the form $\{\sum_{i=1}^m\alpha_i\vv{v}_i\mid \vv{\alpha}\in\rrp^m\}$, and an \emph{open polyhedral cone} is a set of the form $\{\sum_{i=1}^m\alpha_i\vv{v}_i\mid \vv{\alpha}\in\rrpp^m\}$ for vectors $\vv{v}_1,\ldots,\vv{v}_m\in\rr^n$. We refer to such cones as \emph{pointed} if they do not contain any lines. For a set $S\subseteq\rr^n$, we denote its \emph{relative interior} (i.e., the interior with respect to its affine hull) by $\relint S$. 

The notion of \emph{dimension} that is used throughout the paper is the dimension of \emph{semialgebraic sets}, which at all nonsingular points agrees with the usual dimension of manifolds with boundary (see, e.g., \cite{bochnak:coste:roy:1998}*{Section~2.8} for several equivalent definitions). 

\section{Preliminaries}
\label{sec:prelim}

In this section, we introduce the basic objects and terminology of interest, and illustrate these in \Cref{ex:running} below. We start with Euclidean embedded graphs and mass-action systems. Recall from graph theory that a directed graph is said to be \emph{simple} if it has no multiple edges and has no self-loops.

\begin{definition}\label{def:E-graph}
A \textit{Euclidean embedded graph} (or \textit{E-graph} for short) is a finite simple directed graph $G=(V, E)$, where $V \subseteq \rr^n$ is the set of vertices, and $E \subseteq V \times V$ is the set of directed edges. Given an edge $(\yy,\yy')\in E$ we often write $\yy \to \yy'\in E$, and refer to  $\yy$ and $\yy'$ as the \textit{source vertex} and \textit{product vertex} of the edge $\yy \to \yy'$, respectively.
\end{definition}

\begin{definition}\label{def:mass-action}
Let $G=(V,E)$ be an E-graph with $V\subseteq\rr^n$ and let $\vv{\kk}\in\rrpp^{|E|}$ be a labeling of the edges. The \emph{species-formation function} $\vv f_{(G,\vv\kk)}\colon \rrpp^n \to \rr^n$ is defined by
\begin{align}\label{eq:f_(G,k)}
\vv f_{(G,\vv\kk)} (\xx) = \sum_{\yy\to\yy' \in E} \kk_{\yy\to\yy'} \xx^\yy (\yy' - \yy).
\end{align} 
The positive real number $\kk_{\yy\to\yy'}$ is called the \textit{rate constant} corresponding to the reaction $\yy\to\yy'$. The \textit{mass-action system generated by $(G, \vv\kk)$} is the  dynamical system on $\rrpp^{n}$ given by
\begin{align}\label{eq:dxdt_f_(G,k)}
\frac{\mathrm{d}\xx}{\mathrm{d}t} = \vv f_{(G,\vv\kk)}(\xx).
\end{align}
\end{definition}

\begin{remark}
When $V \subseteq \mathbb{Z}_{\geq 0}^n$, \Cref{def:mass-action} corresponds to \emph{classical} mass-action systems. The interpretation of an edge $\yy\to\yy'\in E$ is that the linear combination $\sum_{i=1}^n y_i \X_i$ of the species $\{\X_1,\ldots,\X_n\}$ is transformed to $\sum_{i=1}^n y'_i \X_i$. In this case, the positive orthant $\rrpp^n$ is forward invariant under the mass-action differential equation \eqref{eq:dxdt_f_(G,k)}; no solution can reach the boundary of $\rrpp^n$ in finite time. Further, $\vv{f}_{(G,\vv\kk)}$ in \eqref{eq:f_(G,k)} is a polynomial, which is the case in most practical applications.
\end{remark}

Next, we define the stoichiometric subspace and the positive stoichiometric classes of an E-graph; the discussion following the definition illuminates their relevance. 
\begin{definition}
For an E-graph $G = (V, E)$ in $\rr^n$, the linear subspace $\mc{S}$ of $\rr^n$, defined by $\mc{S}=\spn\{\yy'-\yy \st \yy \to \yy' \in E\}$ is called the \emph{stoichiometric subspace} of $G$. For an $\xx_0 \in \rrpp^n$, the \emph{positive stoichiometric class} through $\xx_0$ is the linear manifold $(\xx_0 + \mc{S})_{>0}$, i.e., the positive part of the coset $\xx_0 + \mc{S}$.
\end{definition}

Note that the solutions of \eqref{eq:dxdt_f_(G,k)} are defined uniquely as long as they are in $\rrpp^n$; and a solution with initial condition $\xx(0) = \xx_0$ remains in the positive stoichiometric class $(\xx_0 + \mc{S})_{>0}$ as long as it exists. In fact, the solutions are confined to a potentially strictly smaller linear submanifold of their positive stoichiometric class.

\begin{definition}
For an E-graph $G = (V, E)$ in $\rr^n$ and a $\vv\kk \in \rrpp^{|E|}$, the linear subspace $\mc{S}^{\vv\kk}$ of $\rr^n$, defined by $\mc{S}^{\vv\kk}=\spn\{\vv{f}_{(G,\vv\kk)}(\xx) \st \xx \in \rrpp^n\}$ is called the \emph{kinetic subspace} of $(G,\vv\kk)$.
\end{definition}

In general, for all $\vv\kk$, the kinetic subspace $\mc{S}^{\kk}$ is a subspace of the stoichiometric subspace $\mc{S}$; and the solution with initial condition $\xx(0) = \xx_0 \in \rrpp^n$ is confined to $(\xx_0 + \mc{S}^{\vv\kk})_{>0}$. In \Cref{subsec:Kdt_G}, we assume that the E-graph in question satisfies $\mc{S}^{\kk} = \mc{S}$ for all $\vv\kk$. Feinberg and Horn identified a large class of E-graphs for which this property holds, see \cite{feinberg:horn:1977}*{Section~6}. Before we state their result, we recall the notion of weak reversibility.

\begin{definition}
An E-graph $G=(V, E)$ is \emph{weakly reversible} if for any $\yy, \yy' \in V$, there exists a directed path from $\yy$ to $\yy'$ if and only if there exists a directed path from $\yy'$ to $\yy$.
\end{definition}

\begin{theorem} \label{thm:FH1977}
Let $G = (V, E)$ be an E-graph for which there exists a directed path between any two vertices that are in the same connected component. Then $\mc{S}^{\kk} = \mc{S}$ for all $\vv\kk \in \rrpp^{|E|}$. In particular, the conclusion holds if $G$ is weakly reversible.
\end{theorem}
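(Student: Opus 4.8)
The plan is to establish the Feinberg--Horn result in \cite{feinberg:horn:1977} by analyzing the span of the image of $\vv f_{(G,\vv\kk)}$ directly, using the structure imposed by the hypothesis that every connected component is strongly connected (i.e.\ there is a directed path between any two vertices in the same component). The inclusion $\mc{S}^{\vv\kk}\subseteq\mc{S}$ is immediate from the definition, since each $\vv f_{(G,\vv\kk)}(\xx)$ is a nonnegative combination of the reaction vectors $\yy'-\yy$, which span $\mc{S}$ by definition. So the entire content is the reverse inclusion $\mc{S}\subseteq\mc{S}^{\vv\kk}$, equivalently, that the various values $\vv f_{(G,\vv\kk)}(\xx)$ as $\xx$ ranges over $\rrpp^n$ span all of $\mc{S}$, not merely a proper subspace.

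First I would reduce to the case of a single strongly connected component, since both $\mc{S}$ and $\mc{S}^{\vv\kk}$ decompose as sums over the connected components of $G$ (the reaction vectors and the species-formation function split accordingly). Within one strongly connected component, the key observation is that the monomials $\xx^{\yy}$ associated to distinct source vertices $\yy$ are \emph{linearly independent} as functions on $\rrpp^n$. This lets me extract information about individual edges from the aggregate $\vv f_{(G,\vv\kk)}$: by evaluating along suitable curves or by taking appropriate limits/derivatives in $\xx$, I can isolate the contribution of each source monomial. Concretely, I would argue that $\mc{S}^{\vv\kk}$ must contain, for each source vertex $\yy$, the vector $\sum_{\yy\to\yy'\in E}\kk_{\yy\to\yy'}(\yy'-\yy)$, i.e.\ the net reaction vector out of $\yy$. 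The goal is then to show these net vectors, together over all sources in the component, span the same space as the individual edge vectors $\yy'-\yy$.

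The crux is to pass from these \emph{net} out-flux vectors back to the \emph{individual} edge vectors, and this is where strong connectedness is essential. The idea is that if $\xx^*$ is chosen to be a \emph{vertex-balanced} (complex-balanced) state of $(G,\vv\kk)$ — which exists for weakly reversible networks, or can be invoked via the Horn--Jackson / Deficiency Zero machinery, or constructed directly from the tree-based formula for the stationary distribution of the associated Laplacian — then at such a state the fluxes along the component form a balanced circulation. One can then perturb around $\xx^*$: the derivative $D\vv f_{(G,\vv\kk)}(\xx^*)$ is a linear map whose image lies in $\mc{S}^{\vv\kk}$, and using the balanced structure one shows this image is already all of $\mc{S}$. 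Equivalently, I would use the graph-theoretic fact that for a strongly connected graph the cycle space of the edges, combined with the vertex-balancing relations, forces the span of the attainable flux-weighted reaction vectors to coincide with $\spn\{\yy'-\yy\}$. An alternative, more elementary route avoids equilibria entirely: expand $\vv f_{(G,\vv\kk)}(\xx)$ in the independent monomials $\{\xx^{\yy}\}$, conclude $\mc{S}^{\vv\kk}$ contains every net vector, and then exploit strong connectivity to build, for each target edge $\yy\to\yy'$, a directed cycle through it, so that telescoping the net vectors along the cycle recovers $\yy'-\yy$ modulo vectors already known to lie in $\mc{S}^{\vv\kk}$.

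The main obstacle I anticipate is precisely this last passage: the net out-flux vectors are \emph{positive} combinations of edge vectors, so recovering a single edge vector requires genuinely using the cyclic (strongly connected) structure rather than just linear algebra on the monomial coefficients — a merely weakly connected or acyclic component could have $\mc{S}^{\vv\kk}\subsetneq\mc{S}$. I would handle this by fixing a spanning structure: for each component pick a root, use strong connectivity to choose directed paths and return paths, and show via a telescoping/cycle argument that the $\kk$-weighted net vectors span the full reaction space independently of the particular values of $\vv\kk$. The final sentence of the theorem, that weak reversibility implies the hypothesis, is immediate since weak reversibility is exactly the condition that each connected component is strongly connected, so no separate argument is needed there.
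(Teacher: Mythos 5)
Your first steps are sound: $\mc{S}^{\vv\kk}\subseteq\mc{S}$ is immediate, and linear independence of the monomials $\xx^\yy$ on $\rrpp^n$ does give $\mc{S}^{\vv\kk}=\spn\{\vv{w}_\yy \st \yy \text{ a source vertex}\}$, where $\vv{w}_\yy=\sum_{\yy\to\yy'\in E}\kk_{\yy\to\yy'}(\yy'-\yy)$. The crux, as you correctly identify, is that these net vectors span $\mc{S}$; but neither of your two routes establishes this. The equilibrium route rests on a false premise: a weakly reversible pair $(G,\vv\kk)$ need \emph{not} admit a vertex-balanced equilibrium. Such an equilibrium exists precisely when $\vv\kk\in\Kt(G)$, and $\Kt(G)$ has codimension equal to the deficiency of $G$ --- this deficiency obstruction is the central theme of the very paper you are working in. The matrix-tree theorem does produce a positive vector $\vv\chi$ in the kernel of the kinetic Laplacian, but $\vv\chi$ is in general not of the form $((\xx^*)^{\yy})_{\yy\in V}$ for any $\xx^*\in\rrpp^n$, so there is no vertex-balanced state to perturb around; and since the theorem must hold for \emph{all} $\vv\kk\in\rrpp^{|E|}$, you cannot retreat to the toric locus where your Jacobian argument would run.

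The ``elementary'' route is where the whole difficulty sits, and cycle-telescoping cannot work even in principle: each $\vv{w}_\yy$ is homogeneous of degree one in $\vv\kk$, while an edge vector $\yy'-\yy$ is constant in $\vv\kk$, so in any identity $\yy'-\yy=\sum_{\yy''} c_{\yy''}\vv{w}_{\yy''}$ the coefficients must depend on $\vv\kk$ (they are rational functions of it); no telescoping scheme with $\vv\kk$-free coefficients can isolate an individual edge vector, and summing net vectors along a directed cycle does not telescope at all once some vertex on the cycle has out-degree at least two. The missing argument --- essentially that of \cite{feinberg:horn:1977}, which the paper cites for this theorem rather than proving it --- is dual: if a linear functional $\ell$ annihilates every $\vv{w}_\yy$, set $\mu_\yy=\ell(\yy)$; the conditions say that for each source $\yy$ the value $\mu_\yy$ is a weighted average of the values $\mu_{\yy'}$ over its out-neighbors, and a maximum-principle (Perron--Frobenius) argument shows that $\mu$ is constant on each connected component provided each component contains a unique terminal strongly connected subgraph; hence $\ell$ annihilates every $\yy'-\yy$, which gives $\mc{S}\subseteq\spn\{\vv{w}_\yy\}=\mc{S}^{\vv\kk}$. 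Equivalently, the kinetic Laplacian of such a component has one-dimensional kernel. Finally, you have quietly weakened the statement: ``a directed path between any two vertices in the same connected component'' means a path in at least one of the two directions, which is strictly weaker than strong connectivity of the component; under your reading the hypothesis becomes equivalent to the paper's definition of weak reversibility, rendering the closing ``in particular'' sentence vacuous. The dual argument just sketched covers the weaker unilateral hypothesis (which still forces a unique terminal strongly connected piece per component), whereas your strong-connectivity reduction proves less than what is stated.
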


Vertex balancing plays a central role in this paper.
\begin{definition}\label{def:vertex-balanced}
Given an E-graph $G=(V,E)$ and $\vv\kk\in\rrpp^{|E|}$, the pair $(G,\vv\kappa)$ is {\em vertex-balanced} if there exists an $\vv x^* \in \rrpp^n$ satisfying the equation
\begin{align*}
\sum_{\yy \to \yy_0\in E}\kappa_{\yy \to \yy_0} (\xx^*)^{\yy} = \sum_{\yy_0 \to \yy'\in E}\kappa_{\yy_0 \to \yy'} (\xx^*)^{\yy_0}
\end{align*}
for every vertex $\yy_0\in V$. When such an $\xx^*$ exists, the mass-action system generated by $(G,\vv\kappa)$ is called a \textit{toric dynamical system}, and $\xx^*$ is called a {\em vertex-balanced equilibrium} (or {\em complex-balanced equilibrium} in the classical theory of mass-action systems).
\end{definition}

It is not hard to see that the pair $(G,\vv\kappa)$ can be vertex-balanced only if $G$ is weakly reversible.

The mass-action system generated by a vertex-balanced pair $(G,\vv\kappa)$ displays remarkably well-behaved dynamical properties, which are summarized in the following theorem. Statements (a) and (b) are due to Horn and Jackson \cite{horn:jackson:1972}. For statement (c), see \cite{johnston:2011}*{Theorem 4.3.4}, \cite{feinberg:2019}*{Theorem 15.2.2 (iii)}, or \cite{boros:muller:regensburger:2020}*{Theorem 8}.
\begin{theorem} \label{thm:nice_dynamics}
Let $(G,\vv\kappa)$ be vertex-balanced, and fix a vertex-balanced equilibrium $\xx^*$. Then the mass-action system generated by $(G,\vv\kappa)$ has the following properties.
\begin{enumerate}[label={\rm(\alph*)}]
\item The set of positive equilibria is given by
\begin{align} \label{eq:qts}
\mathcal{E} = \{\xx \in \rrpp^n \st \log \xx - \log \xx^* \in \mc{S}^\perp\},
\end{align}
every positive stoichiometric class has exactly one positive equilibrium, and every positive equilibrium is vertex-balanced.
\item The Horn--Jackson function $L \colon \rrpp^n \to \rr$, defined by
\begin{align} \label{eq:H-J}
L(\vv x) = \sum_{i=1}^n x_i (\log \tfrac{x_i}{x_i^*} - 1),
\end{align}
satisfies $\langle (\grad L)(\xx), \vv{f}_{(G,\vv\kk)}(\xx) \rangle \leq 0$ for all $\xx \in \rrpp^n$, with equality if and only if $\xx \in \mathcal{E}$. In particular, $L$ is a strict Lyapunov function in every positive stoichiometric class, and there is no periodic solution in $\rrpp^n$.
\item The equilibrium $\xx^*$ is linearly stable relative to its positive stoichiometric class.
\end{enumerate}
\end{theorem}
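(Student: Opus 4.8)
The plan is to derive everything from a single computation of the rate of change of the Horn--Jackson function $L$ along trajectories, and then to treat existence/uniqueness of equilibria and linear stability separately. First I would compute $(\grad L)(\xx)=\log\xx-\log\xx^*$ directly from \eqref{eq:H-J}, and introduce, for each vertex $\yy$, the quantity $u_\yy=\langle\log\xx-\log\xx^*,\yy\rangle$, so that $(\xx/\xx^*)^\yy=e^{u_\yy}$, together with the edge weights $a_{\yy\to\yy'}=\kappa_{\yy\to\yy'}(\xx^*)^{\yy}>0$. The defining equation in \Cref{def:vertex-balanced} says precisely that $\sum_{\yy\to\yy_0}a_{\yy\to\yy_0}=\sum_{\yy_0\to\yy'}a_{\yy_0\to\yy'}$ for every vertex $\yy_0$, i.e.\ that $\vv a$ is a balanced flow on $G$.

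For part (b), substituting into \eqref{eq:f_(G,k)} gives
\begin{align*}
\langle(\grad L)(\xx),\vv f_{(G,\vv\kk)}(\xx)\rangle=\sum_{\yy\to\yy'\in E}a_{\yy\to\yy'}\,e^{u_\yy}(u_{\yy'}-u_\yy).
\end{align*}
The key inequality is the tangent-line bound for the strictly convex exponential, $e^{u_\yy}(u_{\yy'}-u_\yy)\le e^{u_{\yy'}}-e^{u_\yy}$, after which the balance condition makes the right-hand side telescope to $\sum_{\yy_0}e^{u_{\yy_0}}\bigl(\sum_{\yy\to\yy_0}a_{\yy\to\yy_0}-\sum_{\yy_0\to\yy'}a_{\yy_0\to\yy'}\bigr)=0$. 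This yields $\langle\grad L,\vv f\rangle\le0$, with equality iff $u_\yy=u_{\yy'}$ for every edge (strict convexity, all weights positive), which is exactly $\log\xx-\log\xx^*\in\mc{S}^\perp$, i.e.\ $\xx\in\mathcal{E}$. Strictness of the Lyapunov function follows, and the absence of periodic orbits is then formal: along a closed orbit $L$ would have to return to its starting value while being nonincreasing, forcing the orbit into the equilibrium set $\mathcal{E}$.

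For part (a), I would first check that every $\xx\in\mathcal{E}$ is vertex-balanced: if $\log\xx-\log\xx^*\in\mc{S}^\perp$ then $(\xx/\xx^*)^{\yy}$ is constant on each connected component of $G$, so both sides of the balancing equation at a vertex $\yy_0$ are obtained from those for $\xx^*$ by the same positive scalar, and balance is preserved; in particular every point of $\mathcal{E}$ is an equilibrium. Conversely, any positive equilibrium $\xx$ satisfies $\langle\grad L,\vv f\rangle=0$, so the equality case of (b) forces $\xx\in\mathcal{E}$; this identifies $\mathcal{E}$ as exactly the positive equilibrium set and shows every positive equilibrium is vertex-balanced. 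Uniqueness within a stoichiometric class is a short strict-monotonicity argument: if $\xx_1,\xx_2\in\mathcal{E}$ with $\xx_1-\xx_2\in\mc{S}$, then $\log\xx_1-\log\xx_2\in\mc{S}^\perp$, so $\langle\log\xx_1-\log\xx_2,\xx_1-\xx_2\rangle=0$, forcing $\xx_1=\xx_2$. The one genuinely nontrivial input is \emph{existence} of an equilibrium in every positive stoichiometric class, which I expect to be the main obstacle; I would obtain it either from the Birch-type statement that $\xx^*\circ\exp(\mc{S}^\perp)$ meets each coset $\xx_0+\mc{S}$ in a positive point, or by minimizing the strictly convex $L$ over $(\xx_0+\mc{S})_{>0}$ and arguing, via a careful boundary/coercivity analysis, that the minimizer lies in the interior, where $\grad L\perp\mc{S}$ places it in $\mathcal{E}$.

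Finally, for part (c), I would pass to the linearization $\dot{\vv\xi}=J\vv\xi$ on $\mc{S}$, where $J=D\vv f_{(G,\vv\kk)}(\xx^*)$, and use the positive-definite Hessian $H=(\operatorname{Hess}L)(\xx^*)=\operatorname{diag}(1/x_i^*)$ as a quadratic Lyapunov function: differentiating the estimate in (b) shows $HJ+J^{\top}H$ is negative semidefinite on $\mc{S}$, and combining this with the equality analysis of (b) (whose only degenerate directions lie along $\mc{S}^\perp$, transverse to the dynamics on $\mc{S}$) yields that $J|_{\mc{S}}$ is Hurwitz. The two places requiring real work are the existence half of (a) and this Hurwitz argument; both are classical, and I would cite \cite{horn:jackson:1972} and \cite{feinberg:2019} for the remaining details rather than reproducing them.
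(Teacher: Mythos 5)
Your proposal is essentially correct, but note that the paper does not prove this theorem at all: it is presented as classical background, with parts (a) and (b) attributed to Horn--Jackson \cite{horn:jackson:1972} and part (c) cited from \cite{johnston:2011}, \cite{feinberg:2019}, and \cite{boros:muller:regensburger:2020}. What you have written is a faithful reconstruction of that classical argument: the tangent-line bound $e^{u_\yy}(u_{\yy'}-u_\yy)\le e^{u_{\yy'}}-e^{u_\yy}$ combined with telescoping via the balance condition is exactly the Horn--Jackson computation for (b); the monotonicity argument $\langle\log\xx_1-\log\xx_2,\xx_1-\xx_2\rangle=0\Rightarrow\xx_1=\xx_2$ is the standard uniqueness proof; and your Hessian argument for (c) is in substance the proof found in the cited references. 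Two caveats are worth recording. First, you correctly identify that \emph{existence} of an equilibrium in each positive stoichiometric class is the one genuinely hard step; minimizing $L$ over $(\xx_0+\mc{S})_{>0}$ with a careful boundary/coercivity analysis (or invoking a Birch-type theorem) is indeed how this is settled classically, and deferring to the literature there is appropriate --- the paper itself defers the entire theorem in the same way. Second, in (c) the degenerate directions of the quadratic form $v\mapsto v^\top(HJ+J^\top H)v$ are not $\mc{S}^\perp$ but $H^{-1}\mc{S}^\perp$, the orthogonal complement of $\mc{S}$ with respect to the inner product induced by $H=\operatorname{diag}(1/x_i^*)$; your conclusion survives because $H$ is positive definite and diagonal, so $H^{-1}\mc{S}^\perp\cap\mc{S}=\{0\}$, hence $HJ+J^\top H$ is strictly negative definite on $\mc{S}$, and for any eigenvector $v\in\mc{S}_{\mathbb{C}}$ of $J|_{\mc{S}}$ with eigenvalue $\mu$ one gets $\operatorname{Re}(\mu)\,\bar v^\top Hv=\tfrac12\bar v^\top(HJ+J^\top H)v<0$, so $J|_{\mc{S}}$ is Hurwitz. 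With these points tightened (or delegated to the cited sources, as you indicate), the proposal is a complete and correct account of the theorem the paper takes as known.
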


Asymptotic stability of the vertex-balanced equilibrium relative to its positive stoichiometric class follows from (b). The linear stability stated in (c) is a stronger property. Denoting by $J(\xx^*) \in \rr^{n \times n}$ the Jacobian matrix of a mass-action system, evaluated at a vertex-balanced equilibrium $\xx^* \in \rrpp^n$, each eigenvalue of the linear transformation $J(\xx^*)|_\mc{S}\colon \mc{S} \to \mc{S}$ has a negative real part. In other words, restricting the dynamics to its positive stoichiometric class, $(\xx^*+\mc{S})_{>0}$, we find that $\xx^*$ is linearly stable. This fact plays an important role in the proofs of \Cref{thm:homeo_Psi_H,thm:homeo_Psi} in \Cref{sec:topology} below.

Besides the properties listed in \Cref{thm:nice_dynamics}, we can often also conclude \emph{global} asymptotic stability of a vertex-balanced equilibrium relative to its positive stoichiometric class (when some other tool lets us exclude the possibility of a solution approaching the boundary of $\rrpp^n$). For instance, this includes strongly connected networks
\cites{anderson:2011a,gopalkrishnan:miller:shiu:2014,boros:hofbauer:2020}.

We remark that (4) implies that the set of positive equilibria is a \emph{positive toric variety}, in the sense that it admits a monomial parametrization, and refer to \cite{feliu:henriksson:2024} for a discussion on the algebraic and geometric consequences of this. Moreover, mass-action systems whose set of positive equilibria is as in \eqref{eq:qts}, and for which property (b) in \Cref{thm:nice_dynamics} holds, are called \emph{quasi-thermodynamic} \cite{horn:jackson:1972}. Hence, vertex-balanced mass-action systems are quasi-thermodynamic.

The following four definitions are vital for the rest of this paper. We define the toric locus, dynamical equality, disguised vertex-balanced pairs, and the disguised toric locus.

\begin{definition}\label{def:KtG}
For an E-graph $G=(V,E)$, define the \textit{toric locus} of G as the set
\begin{align*}
\Kt(G) &= \{\vv\kk\in \rrpp^{|E|} \st (G,\vv\kk) \text{ is vertex-balanced}\}.
\end{align*}
\end{definition}

\begin{definition} \label{def:dyn_equil_rateconstant}
Let $G=(V,E)$ and $H=(V_H,E_H)$ be E-graphs and let $\vv\kk\in\rrpp^{|E|}$ and  $\vv\lambda\in\rrpp^{|E_H|}$. The pairs $(G,\vv\kk)$ and $(H,\vv\lambda)$ are said to be \emph{dynamically equal}, denoted $(G,\vv\kk) \triangleq (H,\vv\lambda)$, if $$\vv{f}_{(G,\vv\kk)}(\xx)=\vv{f}_{(H,\vv\lambda)}(\xx)\quad \text{for all $\xx\in\rrpp^n$}.$$
\end{definition}

\begin{definition}
Let $G=(V,E)$ be an E-graph, and let $\vv\kk\in\rrpp^{|E|}$. We say that $(G,\vv\kk)$ is \emph{disguised vertex-balanced} if there exist an E-graph $H$ and a $\vv\lambda \in \Kt(H)$ such that $(G,\vv\kappa) \triangleq (H,\vv\lambda)$.
\end{definition}

\begin{definition} \label{def:KdtG}
For an E-graph $G=(V,E)$, define the \textit{disguised toric locus} of G as the set
\begin{align*}
\Kdt(G)=\{\vv\kk\in\rrpp^{|E|}\st \text{$(G,\vv\kk)$ is disguised vertex-balanced}\}.
\end{align*}
Furthermore, we define the \emph{disguised toric locus of $G$ with respect to $H$} for a fixed E-graph $H$ to be
\begin{align*}
\Kdt(G,H)=\{\vv\kk\in\rrpp^{|E|}\st \text{$(G,\vv\kk)\triangleq (H,\vv\lambda)$ for some $\vv\lambda\in\Kt(H)$}\}.
\end{align*}
\end{definition}

The investigation of the disguised toric locus $\Kdt(G)$ is motivated by the fact that in many examples, $\Kdt(G)$ is significantly larger than $\Kt(G)$, and for every $\vv\kappa \in \Kdt(G)$, the mass-action system generated by $(G,\vv\kk)$ enjoys remarkable dynamical properties, as $(G,\vv\kappa)$ is dynamically equal to a vertex-balanced pair $(H,\vv\lambda)$. For the precise statement, see \Cref{thm:nice_dynamics_disg} below, which is an immediate consequence of \Cref{thm:nice_dynamics}. We remark that in most practical applications, the stoichiometric subspaces of $G$ and $H$ coincide.
In particular, this is true whenever $G$ satisfies the assumptions of \Cref{thm:FH1977} (which includes all the examples in \Cref{sec:examples}).

\begin{theorem} \label{thm:nice_dynamics_disg}
Let $(G,\vv\kappa)$ be disguised vertex-balanced, and let $\mc{S}_H$ denote the stoichiometric subspace of $H$, where the E-graph $H$ is such that $\vv\kk \in \Kdt(G,H)$. Then there exists an $\xx^* \in \rrpp^n$ such that $\vv{f}_{(G,\vv\kk)}(\xx^*)=\vv{0}$, and the mass-action system generated by $(G,\vv\kappa)$ has the following properties for any such fixed $\xx^*$.
\begin{enumerate}[label={\rm(\alph*)}]
\item The set of positive equilibria is given by $\mathcal{E} = \{\xx \in \rrpp^n \st \log \xx - \log \xx^* \in \mc{S}_H^\perp\}$, and the set $(\xx_0+\mc{S}_H)_{>0}$ has exactly one positive equilibrium for every $\xx_0 \in \rrpp^n$.
\item The Horn--Jackson function $L \colon \rrpp^n \to \rr$, defined in \eqref{eq:H-J}, satisfies $\langle (\grad L)(\xx), \vv{f}_{(G,\vv\kk)}(\xx) \rangle \leq 0$ for all $\xx \in \rrpp^n$, with equality if and only if $\xx \in \mathcal{E}$. In particular, $L$ is a strict Lyapunov function in the set $(\xx_0+\mc{S}_H)_{>0}$ for every $\xx_0 \in \rrpp^n$, and there is no periodic solution in $\rrpp^n$.
\item The equilibrium $\xx^*$ is linearly stable relative to the set $(\xx^*+\mc{S}_H)_{>0}$.
\end{enumerate}
\end{theorem}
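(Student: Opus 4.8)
The plan is to reduce everything to \Cref{thm:nice_dynamics} by exploiting that dynamical equality means the two systems literally share the same right-hand side. By the definition of disguised vertex-balanced, there exist an E-graph $H$ and $\vv\lambda\in\Kt(H)$ with $(G,\vv\kk)\triangleq(H,\vv\lambda)$, that is, $\vv f_{(G,\vv\kk)}(\xx)=\vv f_{(H,\vv\lambda)}(\xx)$ for all $\xx\in\rrpp^n$. Thus the mass-action systems generated by $(G,\vv\kk)$ and $(H,\vv\lambda)$ are the \emph{same} dynamical system on $\rrpp^n$, and since $(H,\vv\lambda)$ is vertex-balanced, \Cref{thm:nice_dynamics} applies to it directly. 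The point I would stress throughout is that every object appearing in the conclusion---the equilibrium set, the Lyapunov inequality, and the Jacobian---is intrinsic to the common vector field $\vv f:=\vv f_{(G,\vv\kk)}=\vv f_{(H,\vv\lambda)}$ and to the chosen point $\xx^*$, and does not depend on whether we regard $\vv f$ as coming from $G$ or from $H$.

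First I would fix a vertex-balanced equilibrium $\xx^*$ of $(H,\vv\lambda)$, whose existence is guaranteed since $\vv\lambda\in\Kt(H)$; then $\vv f_{(G,\vv\kk)}(\xx^*)=\vv f_{(H,\vv\lambda)}(\xx^*)=\vv 0$, which supplies the claimed zero of $\vv f_{(G,\vv\kk)}$. For part (a), the positive equilibria of $(G,\vv\kk)$ are by definition the positive zeros of $\vv f$, hence coincide with the positive equilibria of $(H,\vv\lambda)$; \Cref{thm:nice_dynamics}(a) then identifies this set as $\{\xx\in\rrpp^n\st \log\xx-\log\xx^*\in\mc{S}_H^\perp\}$ and gives exactly one positive equilibrium in each positive stoichiometric class of $H$, which are precisely the sets $(\xx_0+\mc{S}_H)_{>0}$.

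For part (b), the Horn--Jackson function $L$ in \eqref{eq:H-J} depends only on $\xx^*$, so its gradient is the same for both systems; combined with $\vv f_{(G,\vv\kk)}=\vv f_{(H,\vv\lambda)}$, the inner product $\langle(\grad L)(\xx),\vv f_{(G,\vv\kk)}(\xx)\rangle$ equals the corresponding quantity for $(H,\vv\lambda)$, so the sign condition and the equality case transfer verbatim from \Cref{thm:nice_dynamics}(b). Since the values of $\vv f$ lie in $\mc{S}_H$ (each reaction vector of $H$ does), trajectories of the common system starting in $(\xx_0+\mc{S}_H)_{>0}$ remain there, which is what makes $L$ a strict Lyapunov function on each such slice and excludes periodic orbits in $\rrpp^n$. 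For part (c), the Jacobian $J(\xx^*)=(D\vv f)(\xx^*)$ is identical for the two systems, so its restriction to $\mc{S}_H$ has only eigenvalues with negative real part by \Cref{thm:nice_dynamics}(c), yielding linear stability relative to $(\xx^*+\mc{S}_H)_{>0}$.

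There is no serious obstacle here; as the excerpt advertises, the statement is essentially bookkeeping on top of \Cref{thm:nice_dynamics}. The one place that calls for a moment of care is to keep the relevant invariant subspace equal to the stoichiometric subspace $\mc{S}_H$ of $H$, rather than that of $G$: the trajectories of $(G,\vv\kk)$ are confined to cosets of $\mc{S}_H$ precisely because $\vv f_{(G,\vv\kk)}$ takes values in $\mc{S}_H$ (being equal to $\vv f_{(H,\vv\lambda)}$), and this is exactly why all three statements must be phrased relative to $\mc{S}_H$.
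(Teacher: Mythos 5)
Your proposal is correct and matches the paper exactly: the paper gives no separate proof, stating only that \Cref{thm:nice_dynamics_disg} is an immediate consequence of \Cref{thm:nice_dynamics}, since $(G,\vv\kk)\triangleq(H,\vv\lambda)$ means the two systems share the same vector field, which is precisely your argument. Your added care about phrasing everything relative to $\mc{S}_H$ (and noting that any zero of the common vector field is a vertex-balanced equilibrium of $(H,\vv\lambda)$ by \Cref{thm:nice_dynamics}(a), so the conclusions hold for any such fixed $\xx^*$) is exactly the bookkeeping the paper leaves implicit.
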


Further, as for $\vv\kk \in \Kt(G)$, we can sometimes conclude not only local but even global asymptotic stability of a positive equilibrium of $(G,\vv\kk)$, where $\vv\kk \in \Kdt(G)$ \cites{anderson:2011a,gopalkrishnan:miller:shiu:2014,boros:hofbauer:2020}.

We remark that if for a dynamical system $\frac{\mathrm{d}\xx}{\mathrm{d}t} = \vv{g}(\xx)$ on $\rrpp^n$ there exist an E-graph $H$ (in $\rr^n$) and a $\vv\lambda \in \Kt(H)$ such that $\vv{g} = \vv{f}_{(H,\vv\lambda)}$ on $\rrpp^n$ then the dynamical system $\frac{\mathrm{d}\xx}{\mathrm{d}t} = \vv{g}(\xx)$ enjoys all the remarkable dynamical properties that are listed in \Cref{thm:nice_dynamics_disg}. In particular, it is quasi-thermodynamic. The algorithmic aspects of this approach were studied, for example, in \cite{szederkenyi:hangos:2011}.

It has been shown in \cite{brustenga:craciun:sorea:2022} that $\Kdt(G, H)$ and $\Kdt(G)$ are semialgebraic sets, in the sense that each of them is the union of the solution sets of finitely many systems of polynomial equations and inequalities. The key observation is that the condition on $\vv\kk\in\rrpp^{|E|}$ in the definition of $\Kdt(G, H)$ can be formulated in terms of existential quantifiers and polynomial relations that encode dynamic equality and vertex balancing, and $\Kdt(G)$ is a finite union of such sets.
\begin{lemma}\label{lem:semialgebraic_K}
Let $G=(V,E)$ and $H=(V_H,E_H)$ be E-graphs in $\rr^n$, and $\vv{\kk} \in \rrpp^{|E|}$. Then $\vv\kk\in \Kdt(G,H)$ if and only if there exist $\vv\lambda\in\rrpp^{|E_H|}$ and $\xx\in\rrpp^n$ that satisfy the following dynamical equality and vertex-balancing conditions:
\begin{align}
\label{eq:DE}\tag{DE}
\sum_{\vv y_0\to \vv y' \in E} \kk_{\vv y_0\to \vv y'}(\vv y'-\vv y_0)&=\sum_{\vv y_0 \to \vv y'\in E_H} \lambda_{\vv y_0\to \vv y'} (\vv y'-\vv y_0)\:\:\:\text{for every $\vv y_0 \in V$};\\
\label{eq:VB}\tag{VB}
\sum_{\yy \to \yy_0 \in E_H} \lambda_{\yy \to \yy_0} \xx^\yy &= \sum_{\yy_0 \to \yy' \in E_H} \lambda_{\yy_0 \to \yy'} \xx^{\yy_0}\:\:\:\text{for every $\yy_0 \in V_H$}.
\end{align}
\end{lemma}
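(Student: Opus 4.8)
The plan is to unpack the three nested definitions involved — $\Kdt(G,H)$, dynamical equality $\triangleq$, and the toric locus $\Kt(H)$ — and to observe that the only non-formal step is converting the functional identity $\vv f_{(G,\vv\kk)}=\vv f_{(H,\vv\lambda)}$ into the finite linear system \eqref{eq:DE}. By \Cref{def:KdtG}, $\vv\kk\in\Kdt(G,H)$ holds precisely when there exists $\vv\lambda\in\rrpp^{|E_H|}$ such that simultaneously (i) $\vv\lambda\in\Kt(H)$ and (ii) $(G,\vv\kk)\triangleq(H,\vv\lambda)$. I would establish the equivalences for (i) and (ii) separately, and then recombine the existential quantifiers over $\vv\lambda$ and $\xx$.

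For condition (ii), I would start from the expression \eqref{eq:f_(G,k)} for the species-formation function and group the terms of both $\vv f_{(G,\vv\kk)}$ and $\vv f_{(H,\vv\lambda)}$ according to their source vertex, writing
\[
\vv f_{(G,\vv\kk)}(\xx)=\sum_{\yy_0}\xx^{\yy_0}\Bigl(\sum_{\yy_0\to\yy'\in E}\kk_{\yy_0\to\yy'}(\yy'-\yy_0)\Bigr),
\]
and analogously for $H$, where $\yy_0$ ranges over all vertices occurring as a source in $E$ or $E_H$ and empty inner sums are read as $\vv 0$. Since the monomial functions $\xx\mapsto\xx^{\yy_0}$ are linearly independent on $\rrpp^n$ for distinct exponent vectors $\yy_0$, the functional identity $\vv f_{(G,\vv\kk)}(\xx)=\vv f_{(H,\vv\lambda)}(\xx)$ for all $\xx\in\rrpp^n$ holds if and only if the bracketed coefficient vectors coincide for each $\yy_0$, which is exactly \eqref{eq:DE}. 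This monomial-independence step is the one genuinely substantive part of the argument; everything else is purely definitional.

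Condition (i) requires no work beyond citing the definitions: by \Cref{def:KtG} and \Cref{def:vertex-balanced}, $\vv\lambda\in\Kt(H)$ holds if and only if there exists some $\xx\in\rrpp^n$ satisfying the vertex-balancing equations, which are precisely \eqref{eq:VB}. I would emphasize that the point $\xx$ enters only through \eqref{eq:VB} and is entirely decoupled from \eqref{eq:DE}, so no compatibility condition between the equilibrium and the dynamical-equality relation is needed. Recombining, $\vv\kk\in\Kdt(G,H)$ iff some $\vv\lambda$ makes both (i) and (ii) hold, which by the two equivalences is iff there exist $\vv\lambda\in\rrpp^{|E_H|}$ satisfying \eqref{eq:DE} and $\xx\in\rrpp^n$ satisfying \eqref{eq:VB}, as claimed.

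The only real obstacle is the monomial-independence argument in (ii); beyond that, the lemma is a rewriting of definitions. I would also take care to read \eqref{eq:DE} as ranging over every source vertex appearing in $E\cup E_H$ (for the relevant realization graphs $H\subseteq G^{\comp}$ these all lie in $V$), so that a source of $H$ absent from $V$ correctly forces its $H$-side coefficient to vanish against the empty $G$-side sum, making the equivalence with the functional identity exact.
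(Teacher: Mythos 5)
Your proof is correct and matches the paper's treatment: the paper states this lemma without a formal proof, regarding it as a direct unpacking of \Cref{def:KdtG}, \Cref{def:dyn_equil_rateconstant}, and \Cref{def:KtG}, which is precisely what you do, with the linear independence of distinct monomials $\xx\mapsto\xx^{\yy_0}$ on $\rrpp^n$ supplying the only substantive step (the equivalence of the functional identity with \eqref{eq:DE}). Your closing remark about source vertices of $H$ outside $V$ correctly identifies the one place where the statement implicitly relies on $H\subseteq G^{\comp}$ (so that $V_H\subseteq V$), and handles it appropriately.
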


Next, we introduce the equilibrium locus of an E-graph.
\begin{definition}
For an E-graph $G=(V,E)$ in $\rr^n$, define the \textit{equilibrium locus} of $G$ as the set
\begin{align*}
\Keq(G) &= \{\vv\kk\in \rrpp^{|E|} \st \text{there exists an } \xx \in \rrpp^n \text{ such that } \vv{f}_{(G,\vv\kk)}(\xx) = \vv{0}\}.
\end{align*}
\end{definition}

Provided the E-graph $G=(V, E)$ is weakly reversible, we have $\Kt(G) \neq \emptyset$ (see, e.g., \cite{feinberg:horn:1977}*{Appendix}), and it has been shown in \cite{boros:2019} that $\Keq(G) = \rrpp^{|E|}$. It was proven in \cite{feliu:henriksson:pascual:2024} that $\dim\Keq(G)=|E|$ for any network that has a nondegenerate equilibrium.

In general, for any E-graph $G$ we have the inclusions
\begin{align} \label{eq:chain_K}
\Kt(G) \subseteq\Kdt(G)\subseteq\Keq(G)\subseteq\rrpp^{|E|}.
\end{align}

The following example illustrates the notions introduced in this section.

\begin{example}
\label{ex:running}
We revisit the \emph{partially reversible square} from \Cref{ex:running_example}, which is the E-graph $G = (V, E)$ displayed in \eqref{tikz:running}. It is embedded in $\rr^2$, it has 4 vertices (namely, $V=\{(0,0), (1,0), (1,1), (0,1)\}$) and 6 directed edges. All vertices are both source and target vertices. Notice that $G$ is weakly reversible (and hence, $\Keq(G)=\rrpp^6$), its stoichiometric subspace is $\mc{S} = \rr^2$, and there is only one positive stoichiometric class, namely, the positive quadrant $\rrpp^2$. Since $G$ is weakly reversible, the kinetic subspace $\mc{S}^{\vv\kk}$ equals $\mc{S}$ for all $\vv\kk\in \rrpp^6$.

\begin{align}\label{tikz:running}
\begin{aligned}
\begin{tikzpicture}[scale=2]

\tikzset{mybullet/.style={inner sep=1.5pt,outer sep=5pt,draw,fill,Egraphcolor,circle}};
\tikzset{myarrow/.style={arrows={-stealth},very thick,Egraphcolor}};

\draw [step=1, gray, very thin] (0,0) grid (1.25,1.25);
\draw [ -, black] (0,0)--(1.25,0);
\draw [ -, black] (0,0)--(0,1.25);

\node[mybullet]  (0) at (0,0) {};
\node[mybullet]  (X) at (1,0) {};
\node[mybullet] (XY) at (1,1) {};
\node[mybullet]  (Y) at (0,1) {};

\draw[myarrow]                                 (0) to node[below] {\textcolor{black}{$\kappa_1$}} (X);
\draw[myarrow,transform canvas={xshift=2pt}]   (X) to node[right] {\textcolor{black}{$\kappa_2$}} (XY);
\draw[myarrow,transform canvas={xshift=-2pt}] (XY) to node[left]  {\textcolor{black}{$\kappa_5$}} (X);
\draw[myarrow]                                (XY) to node[above] {\textcolor{black}{$\kappa_3$}} (Y);
\draw[myarrow,transform canvas={xshift=-2pt}]  (Y) to node[left]  {\textcolor{black}{$\kappa_4$}} (0);
\draw[myarrow,transform canvas={xshift=2pt}]   (0) to node[right] {\textcolor{black}{$\kappa_6$}} (Y);

\node at (0.5,1.5) {$(G,\vv\kappa)$};

\node at (0.5,-.85) {$\begin{aligned}
\frac{\mathrm{d}x_1}{\mathrm{d}t} &= \kappa_1 - \kappa_3 x_1x_2 \\
\frac{\mathrm{d}x_2}{\mathrm{d}t} &= \kappa_2 x_1 - \kappa_5 x_1x_2  + \kappa_6 - \kappa_4 x_2 
\end{aligned}$};

\begin{scope}[shift={(3.5,0)}]

\draw [step=1, gray, very thin] (0,0) grid (1.25,1.25);
\draw [ -, black] (0,0)--(1.25,0);
\draw [ -, black] (0,0)--(0,1.25);

\node[mybullet]  (0) at (0,0) {};
\node[mybullet]  (X) at (1,0) {};
\node[mybullet] (XY) at (1,1) {};
\node[mybullet]  (Y) at (0,1) {};

\draw[myarrow]  (0) to node[below] {\textcolor{black}{$\lambda_1$}} (X);
\draw[myarrow]  (X) to node[right] {\textcolor{black}{$\lambda_2$}} (XY);
\draw[myarrow] (XY) to node[above] {\textcolor{black}{$\lambda_3$}} (Y);
\draw[myarrow]  (Y) to node[left]  {\textcolor{black}{$\lambda_4$}} (0);
\draw[myarrow,transform canvas={xshift=1.4pt,yshift=-1.4pt}] (XY) to node[xshift=7pt,yshift=-7pt]  {\textcolor{black}{$\lambda_5$}} (0);
\draw[myarrow,transform canvas={xshift=-1.4pt,yshift=1.4pt}]  (0) to node[xshift=-7pt,yshift=7pt] {\textcolor{black}{$\lambda_6$}} (XY);

\node at (0.5,1.5) {$(H,\vv\lambda)$};

\node at (0.5,-.85) {$\begin{aligned}
\frac{\mathrm{d}x_1}{\mathrm{d}t} &= (\lambda_1+\lambda_6) - (\lambda_3+\lambda_5) x_1x_2 \\
\frac{\mathrm{d}x_2}{\mathrm{d}t} &= \lambda_2 x_1 - \lambda_5 x_1x_2 + \lambda_6 - \lambda_4 x_2  
\end{aligned}$};
    
\end{scope}

\draw[rounded corners, lightgray] (current bounding box.south west) rectangle (current bounding box.north east);

\end{tikzpicture}
\end{aligned}
\end{align}
One finds, for example by the matrix-tree theorem \cite{craciun:dickenstein:shiu:sturmfels:2009}, that the toric locus of $G$ is the codi\-men\-sion-one semialgebraic set
\begin{align*}
    \Kt(G) = \{\vv \kappa \in \rrpp^6 \st \tfrac{\kappa_2\kappa_4}{\kappa_1\kappa_3} = (1+\tfrac{\kappa_6}{\kappa_1}) (1+\tfrac{\kappa_5}{\kappa_3})\}.
\end{align*}
By \Cref{lem:semialgebraic_K}, for the E-graph $H$ in \eqref{tikz:running}, a $\vv\kappa \in \rrpp^6$ is in $\Kdt(G,H)$ if and only if there exist $\vv\lambda \in \rrpp^6$ and $\xx \in \rrpp^2$ such that
\begin{gather}
\kappa_1 = \lambda_1 + \lambda_6, \qquad \kappa_2 = \lambda_2, \qquad \kappa_3 = \lambda_3 + \lambda_5, \qquad \kappa_4 = \lambda_4, \qquad \kappa_5 = \lambda_5, \qquad \kappa_6 = \lambda_6; \tag{DE} \\
\lambda_4 x_2 +\lambda_5 x_1 x_2= \lambda_1 + \lambda_6, \quad\;\; \lambda_1 = \lambda_2 x_1, \quad\;\; \lambda_2 x_1 + \lambda_6 = (\lambda_3 + \lambda_5)x_1 x_2, \quad\;\; \lambda_3 x_1 x_2 = \lambda_4 x_2. \tag{VB}
\end{gather}
Solving this nonlinear quantifier elimination problem, we find that
\begin{align*}
    \Kdt(G,H) &= \{\vv \kappa \in \rrpp^6 \st \tfrac{\kappa_6}{\kappa_1}<1, \tfrac{\kappa_5}{\kappa_3}<1, (1-\tfrac{\kappa_6}{\kappa_1}) (1-\tfrac{\kappa_5}{\kappa_3}) = \tfrac{\kappa_2\kappa_4}{\kappa_1\kappa_3}\},
\end{align*}
a codimension-one semialgebraic set that is disjoint from $\Kt(G)$. In \Cref{subsec:square_parallelogram}, we show that the disguised toric locus $\Kdt(G)$ is given by
\begin{align*}
    \Kdt(G) = \{\vv \kappa \in \rrpp^6 \st (1-\tfrac{\kappa_6}{\kappa_1}) (1-\tfrac{\kappa_5}{\kappa_3}) \leq \tfrac{\kappa_2\kappa_4}{\kappa_1\kappa_3} \leq (1+\tfrac{\kappa_6}{\kappa_1}) (1+\tfrac{\kappa_5}{\kappa_3})\},
\end{align*}
a codimension-zero semialgebraic set in $\rrpp^6$.

Finally, we remark that for some $\vv\kappa \in \rrpp^6 \setminus \Kdt(G)$ the mass-action system generated by $(G,\vv\kappa)$ is \emph{not} quasi-thermodynamic. Indeed, a short calculation shows that when $\kappa_1 = \kappa_3 = \kappa_5 = \kappa_6$, $\kappa_2 = \kappa_4$, and $\kappa_2 > 8\kappa_1$, the Horn--Jackson function (centered at the equilibrium $(x^*_1,x^*_2)=(1,1)$) is \emph{not} a Lyapunov function.
\end{example}

\section{The maximal weakly reversible realization graph}
\label{sec:G_max}

It has been observed in \cite{craciun:jin:yu:2020} that if the pair $(G,\vv\kappa)$ is disguised vertex-balanced, then there exists an E-graph $H$ on the source vertices of $G$ such that $\vv\kappa \in \Fdt(G, H)$. Hence, there are only finitely many E-graphs that one has to consider when exploring the disguised toric locus. Thus, with $G^{\comp}$ denoting the complete simple directed graph on the source vertices of an E-graph $G$, we have
\begin{align}\label{eq:src_vertices_enough_K}
\Kdt(G)=\bigcup_{H\subseteq G^{\comp}} \Kdt(G,H).
\end{align}

It turns out that $\Kdt(G, H) \neq \emptyset$ for a subgraph $H$ of $G^{\comp}$ if and only if $G$ admits a realization with respect to $H$, and $H$ is weakly reversible.

\begin{definition}
Let $G = (V,E)$ and $H = (V_H, E_H)$ be E-graphs. We say that $G$ \emph{admits a realization with respect to} $H$ if there exist $\vv\kappa \in \rrpp^{|E|}$ and $\vv\lambda \in \rrpp^{|E_H|}$ such that $(G,\vv\kappa) \triangleq (H,\vv\lambda)$. Further, let $\Hwr(G)$ denote the set of weakly reversible subgraphs $H$ of $G^{\comp}$ for which $G$ admits a realization with respect to $H$.
\end{definition}

\begin{lemma} \label{lem:Kdt_G_H_nonempty}
Let $G$ and $H \subseteq G^{\comp}$ be E-graphs. Then the following statements hold.
\begin{enumerate}[label={\rm(\alph*)}]
\item We have $\Kdt(G, H) \neq \emptyset$ if and only if $H \in \Hwr(G)$.
\item We have $\Kdt(G) \neq \emptyset$ if and only if $\Hwr(G) \neq \emptyset$.
\end{enumerate}
\end{lemma}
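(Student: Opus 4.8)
The plan is to establish part (a) by unwinding the definitions, with the real work in the ``if'' direction, and then read off part (b) from (a) together with \eqref{eq:src_vertices_enough_K}. For the ``only if'' direction of (a), suppose $\Kdt(G,H)\neq\emptyset$ and pick some $\vv\kk$ in it; by \Cref{def:KdtG} there is a $\vv\lambda\in\Kt(H)$ with $(G,\vv\kk)\triangleq(H,\vv\lambda)$. The mere existence of such a dynamically equal pair shows that $G$ admits a realization with respect to $H$, and since $(H,\vv\lambda)$ is vertex-balanced, $H$ is weakly reversible (vertex balancing forces weak reversibility). Together with the standing hypothesis $H\subseteq G^{\comp}$, this is exactly the statement $H\in\Hwr(G)$.

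For the ``if'' direction, assume $H\in\Hwr(G)$, so $H$ is weakly reversible and there are $\vv\kk_0,\vv\lambda_0$ with $(G,\vv\kk_0)\triangleq(H,\vv\lambda_0)$. The goal is to produce a rate vector $\vv\lambda\in\Kt(H)$ that is \emph{still} realizable by $G$; any $\vv\kk$ realizing it then lies in $\Kdt(G,H)$. The key observation is that, after fixing the equilibrium to be $\vv 1$, realizability of a rate vector is a condition \emph{per source vertex}: writing $C_{\yy_0}=\operatorname{cone}\{\yy'-\yy_0\mid \yy_0\to\yy'\in E\}$ for the reaction cone of $G$ at a source vertex $\yy_0$, the matching condition \eqref{eq:DE} is solvable for some $\vv\kk\in\rrpp^{|E|}$ if and only if the net vector $\sum_{\yy_0\to\yy'\in E_H}\lambda_{\yy_0\to\yy'}(\yy'-\yy_0)$ lies in $\relint C_{\yy_0}$ for every source vertex $\yy_0$ of $H$, together with $\vv 0\in\relint C_{\yy_0}$ for those source vertices $\yy_0$ of $G$ that are not in $V_H$. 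Since $\vv\lambda_0$ is realizable, all of these cone memberships hold for $\vv\lambda_0$. Crucially, each $\relint C_{\yy_0}$ is a cone, so scaling the out-star at $\yy_0$ by any positive factor preserves membership; thus realizability is invariant under every per-vertex rescaling $\lambda_{\yy\to\yy'}\mapsto c_{\yy}\,\lambda_{\yy\to\yy'}$ with $\vv c\in\rrpp^{V_H}$.

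It therefore remains to choose $\vv c\in\rrpp^{V_H}$ so that the rescaled vector $\lambda_{\yy\to\yy'}=c_{\yy}\lambda_{0,\yy\to\yy'}$ is vertex-balanced with equilibrium $\vv 1$. Writing out \Cref{def:vertex-balanced} at $\xx^*=\vv 1$, this is precisely the linear system $\sum_{\yy\to\yy_0\in E_H}c_{\yy}\lambda_{0,\yy\to\yy_0}=c_{\yy_0}\sum_{\yy_0\to\yy'\in E_H}\lambda_{0,\yy_0\to\yy'}$ for all $\yy_0\in V_H$, which says exactly that $\vv c$ lies in the kernel of the weighted graph Laplacian of $H$ with edge weights $\lambda_0$. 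Because $H$ is weakly reversible, the matrix-tree theorem guarantees that this Laplacian admits a strictly positive kernel vector $\vv c$ on each connected component (this is the same mechanism that yields $\Kt(H)\neq\emptyset$). With this $\vv c$, the rescaled $\vv\lambda$ is vertex-balanced, so $\vv\lambda\in\Kt(H)$, and by the previous paragraph it remains realizable by $G$; any realizing $\vv\kk$ then witnesses $\Kdt(G,H)\neq\emptyset$.

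Finally, part (b) is a formal consequence of (a): if $\Hwr(G)\neq\emptyset$, pick $H\in\Hwr(G)$, so $\Kdt(G,H)\neq\emptyset$ by (a) and hence $\Kdt(G)\supseteq\Kdt(G,H)\neq\emptyset$; conversely, if $\Kdt(G)\neq\emptyset$ then by \eqref{eq:src_vertices_enough_K} some $\Kdt(G,H)$ with $H\subseteq G^{\comp}$ is nonempty, whence $H\in\Hwr(G)$ by (a). The main obstacle is the ``if'' direction of (a): the non-obvious point is that one cannot simply invoke $\Kt(H)\neq\emptyset$ to obtain \emph{some} vertex-balanced $\vv\lambda$, since an arbitrary vertex-balanced $\vv\lambda$ need not satisfy the cone-membership constraints inherited from $\vv\lambda_0$, and it is precisely the per-vertex rescaling argument that reconciles vertex balancing with realizability.
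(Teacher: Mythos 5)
Your proof is correct and takes essentially the same approach as the paper: the ``only if'' direction and part (b) are identical, and in the ``if'' direction your per-vertex rescaling vector $\vv c$ (a strictly positive kernel vector of the weighted Laplacian of $(H,\vv\lambda_0)$, which exists by weak reversibility) is exactly the paper's $\vv\chi$, yielding a rescaled $\vv\lambda$ that is vertex-balanced at $\xx^*=\vv 1$. The only cosmetic difference is how preserved realizability is checked: the paper explicitly constructs the realizing rates $\widetilde{\kk}_{\yy\to\yy'}=\kk_{\yy\to\yy'}\overline{\chi}_{\yy}$ by applying the same per-source-vertex scaling to $\vv\kk_0$, whereas you invoke the scale-invariance of the per-vertex cone-membership ($\relint$) characterization of \eqref{eq:DE} --- the same underlying homogeneity observation.
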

\begin{proof}
To prove the ``only if'' part of (a), assume that $\Kdt(G, H) \neq \emptyset$. Let $\vv\kk \in \Kdt(G, H)$ and $\vv\lambda \in \Kt(H)$ be such that $(G,\vv\kk) \triangleq (H,\vv\lambda)$. Since $\Kt(H)$ can only be nonempty if $H$ is weakly reversible, we find that $H \in \Hwr(G)$.

To prove the ``if'' part of (a), assume that $H \in \Hwr(G)$. Denote by $V$ and $E$ the vertices and the edges of $G$, and by $V_H$ and $E_H$ the vertices and the edges of $H$. Let $\vv\kk \in \rrpp^{|E|}$ and $\vv\lambda  \in \rrpp^{|E_H|}$ be such that $(G,\vv\kk) \triangleq (H,\vv\lambda)$. Since $H$ is weakly reversible, there exists a $\vv\chi \in \rrpp^{|V_H|}$ such that 
\begin{align*}
\sum_{\yy \to \yy_0 \in E_H} \lambda_{\yy \to \yy_0} \chi_\yy &= \sum_{\yy_0 \to \yy' \in E_H} \lambda_{\yy_0 \to \yy'} \chi_{\yy_0}\:\:\:\text{for every $\yy_0 \in V_H$},
\end{align*}
see for example \cite{feinberg:horn:1977}*{Appendix}. Let $\overline{\vv\chi} \in \rrpp^{|V \cup V_H|}$ be the extension of $\vv\chi$ by setting $\overline{\chi}_\yy = 1$ for every $\yy \in V \setminus V_H$. Defining $\widetilde{\vv\kk} \in \rrpp^{|E|}$ and $\widetilde{\vv\lambda} \in \rrpp^{|E_H|}$ by
\begin{align*}
    \widetilde{\kk}_{\yy \to \yy'} = \kk_{\yy \to \yy'}\overline{\chi}_\yy \text{ for }\yy \to \yy' \in E \quad \text{ and } \quad \widetilde{\lambda}_{\yy \to \yy'} = \lambda_{\yy \to \yy'}\chi_\yy \text{ for }\yy \to \yy' \in E_H,
\end{align*}
respectively, we have $(G,\widetilde{\vv\kk}) \triangleq (H,\widetilde{\vv\lambda})$ and $\widetilde{\vv\lambda} \in \Kt(H)$ (with $\xx = \vv{1}$ being a vertex-balanced equilibrium). Hence, $\widetilde{\vv\kk} \in \Kdt(G,H)$, and thereby $\Kdt(G,H) \neq \emptyset$. 

Statement (b) directly follows from (a) and formula \eqref{eq:src_vertices_enough_K}.
\end{proof}

Note that $H_1 \cup H_2 \in \Hwr(G)$ for all $H_1, H_2 \in \Hwr(G)$ (because if $(G,\vv\kk_1) \triangleq (H_1,\vv\lambda_1)$ and $(G,\vv\kk_2) \triangleq (H_2,\vv\lambda_2)$ then $(G,\vv\kk_1+\vv\kk_2) \triangleq (H_1 \cup H_2, \widetilde{\vv\lambda_1} + \widetilde{\vv\lambda_2})$, where $\widetilde{\vv\lambda_1}$ and $\widetilde{\vv\lambda_2}$ are the extensions of $\vv\lambda_1$ and $\vv\lambda_2$, respectively, to the edges of $H_1 \cup H_2$ with zeros). Hence, provided $\Hwr(G)\neq\emptyset$, the finite partially ordered set $(\Hwr(G),\subseteq)$ has a unique maximal element.

\begin{definition}
\label{def:Gmax}
For an E-graph $G$ with $\Hwr(G)\neq\emptyset$, the unique maximal element of $(\Hwr(G), \subseteq)$ is called the \emph{maximal weakly reversible realization graph of} $G$, and is denoted as $G^{\max}$. To simplify the language, we will briefly say ``the maximal graph $G^{\max}$'' in the sequel.
\end{definition}

The distinguished role that $G^{\max}$ plays will become apparent in \Cref{sec:disg_flux,sec:topology}, and we will see in \Cref{sec:algorithm} that it can be efficiently computed by solving a linear feasibility problem. Here, we display an improvement of \eqref{eq:src_vertices_enough_K}; it is an immediate consequence of the definitions and \Cref{lem:Kdt_G_H_nonempty}(a). 

\begin{theorem}
\label{thm:G_max_enough_K}
For any E-graph $G$, we have
\begin{align*}
\Kdt(G) = \bigcup_{H\subseteq G^{\max}} \Kdt(G,H) = \bigcup_{H\in \Hwr(G)} \Kdt(G,H).
\end{align*}
\end{theorem}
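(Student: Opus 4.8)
The plan is to deduce the whole statement from \eqref{eq:src_vertices_enough_K} together with \Cref{lem:Kdt_G_H_nonempty}(a), so that essentially no new computation is needed: the entire argument is bookkeeping about which index sets produce the same union. Throughout I assume $\Hwr(G)\neq\emptyset$; otherwise $\Kdt(G)=\emptyset$ by \Cref{lem:Kdt_G_H_nonempty}(b), the graph $G^{\max}$ is undefined, and all three expressions are read as the empty union, so the identity is trivial. This degenerate case should be flagged explicitly at the start.

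First I would prove the rightmost equality, i.e.\ that $\Kdt(G)=\bigcup_{H\in\Hwr(G)}\Kdt(G,H)$. In the union $\bigcup_{H\subseteq G^{\comp}}\Kdt(G,H)$ appearing in \eqref{eq:src_vertices_enough_K}, \Cref{lem:Kdt_G_H_nonempty}(a) tells us that a summand $\Kdt(G,H)$ is nonempty precisely when $H\in\Hwr(G)$. Discarding the empty summands does not change the union, and every $H\in\Hwr(G)$ is by definition a subgraph of $G^{\comp}$, so
\[
\Kdt(G)=\bigcup_{H\subseteq G^{\comp}}\Kdt(G,H)=\bigcup_{H\in\Hwr(G)}\Kdt(G,H).
\]

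Next I would establish the middle equality by showing that the unions over $H\subseteq G^{\max}$ and over $H\in\Hwr(G)$ coincide. For the inclusion ``$\subseteq$'', take any subgraph $H\subseteq G^{\max}$: if $\Kdt(G,H)=\emptyset$ it contributes nothing, and otherwise \Cref{lem:Kdt_G_H_nonempty}(a) forces $H\in\Hwr(G)$, so its contribution already appears on the other side. For the reverse inclusion ``$\supseteq$'', the key point is that $G^{\max}$ is not merely \emph{maximal} but a \emph{maximum} of $(\Hwr(G),\subseteq)$: given any $H\in\Hwr(G)$, the union-closure of $\Hwr(G)$ noted just before \Cref{def:Gmax} gives $H\cup G^{\max}\in\Hwr(G)$, and maximality of $G^{\max}$ then yields $H\cup G^{\max}=G^{\max}$, that is, $H\subseteq G^{\max}$. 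Hence every summand indexed by $\Hwr(G)$ is also a summand indexed by the subgraphs of $G^{\max}$, which completes the equality and the proof.

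The argument has no substantial obstacle; the only point requiring care is the maximum-versus-maximal distinction used in the reverse inclusion, which is exactly what the union-closure remark preceding \Cref{def:Gmax} supplies (and which in any case holds automatically, since a unique maximal element of a finite poset is necessarily a maximum).
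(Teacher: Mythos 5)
Your proof is correct and follows exactly the route the paper intends: the paper's ``proof'' is just the remark that the theorem is an immediate consequence of the definitions, equation \eqref{eq:src_vertices_enough_K}, and \Cref{lem:Kdt_G_H_nonempty}(a), and your argument spells out precisely that bookkeeping (including the maximum-versus-maximal point, which the union-closure remark before \Cref{def:Gmax} settles). The explicit handling of the degenerate case $\Hwr(G)=\emptyset$ is a nice touch of care but not a departure from the paper's approach.
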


We refer to \Cref{ex:running_example_Gmax} for an illustration of $G^{\comp}$ and $G^{\max}$ for the running example of the partly reversible square from \Cref{ex:running_example}.

\section{The disguised toric flux cone}
\label{sec:disg_flux}

The equations in \eqref{eq:VB} in \Cref{lem:semialgebraic_K} are nonlinear in $(\vv\lambda, \xx)$; it is advantageous to hide this nonlinearity temporarily when solving the quantifier elimination problem given by \eqref{eq:DE} and \eqref{eq:VB}. Namely, for fixed $\vv\kappa \in \rrpp^{|E|}$, $\vv\lambda \in \rrpp^{|E_H|}$, $\xx \in \rrpp^n$ define $\vv\beta \in \rrpp^{|E|}$ and $\vv\gamma \in \rrpp^{|E_H|}$ by
\begin{align*}
    \beta_{\yy \to \yy'} = \kappa_{\yy \to \yy'} \xx^\yy \text{ (for } \yy \to \yy' \in E) \quad \text{and} \quad
    \gamma_{\yy \to \yy'} = \lambda_{\yy \to \yy'} \xx^\yy \text{ (for } \yy \to \yy' \in E_H).
\end{align*}
With this, \eqref{eq:DE} can be written as a linear equation in $(\vv\beta,\vv\gamma)$, while \eqref{eq:VB} is linear in $\vv\gamma$. Hence, the nonlinear quantifier elimination problem in \Cref{lem:semialgebraic_K} can be solved in two steps: first, eliminate $\vv\gamma$ and then eliminate $\xx$. We argue in \Cref{sec:topology} that already the solution of the first step (which is a linear problem) gives valuable information about the disguised toric locus. In \Cref{sec:algorithm}, we discuss the whole procedure in more detail. In this section, we concentrate on the linear part of the problem.

\begin{definition}
A \emph{flux vector} (or \emph{flux} for short) of an E-graph $G=(V,E)$ is a vector $\vv{\beta}\in\rrpp^{|E|}$.
\begin{enumerate}[label={\rm(\roman*)}]
\item We say that a flux vector $\vv{\beta}$ is an \emph{equilibrium flux} if
\begin{align*}
\sum_{\yy\to\yy' \in E} \beta_{\yy\to\yy'} (\yy' - \yy) = \vv{0}.
\end{align*}
The set of equilibrium fluxes, denoted as $\Feq(G)$, is called the \emph{equilibrium flux cone}.
\item We say that a flux vector $\vv{\beta}$ is a \emph{vertex-balanced flux} if
\begin{align*}
\sum_{\vv{y}\to \vv{y}_0\in E}\beta_{\vv{y}\to \vv{y}_0} = \sum_{\vv{y}_0\to \vv{y}'\in E} \beta_{\vv{y}_0\to \vv{y}'}\quad \text{for all $\vv{y}_0\in V$}.
\end{align*}
The set of vertex-balanced fluxes, denoted as $\Ft(G)$, is called the \emph{toric flux cone}.
\end{enumerate}
\end{definition}

We note that the definition of a flux varies slightly in the literature; see, e.g., \cites{alradhawi:2023,telek:feliu:2023}.

The word ``cone'' in the names for $\Feq(G)$ and $\Ft(G)$ alludes to the fact that they are pointed polyhedral cones intersected with the positive orthant. The polyhedral structure of $\Ft(G)$ is discussed in \cite{craciun:jin:sorea:2024}*{Section~4}.

Next, with a harmless abuse of notation and terminology, we introduce the flux version of dynamical equality (see \Cref{def:dyn_equil_rateconstant} for the rate constant version).

\begin{definition} \label{def:dyn_equil_flux}
Let $G=(V,E)$ and $H=(V_H,E_H)$ be E-graphs and let $\vv\beta\in\rrpp^{|E|}$ and  $\vv\gamma\in\rrpp^{|E_H|}$ be flux vectors. We say that two pairs $(G,\vv\beta)$ and $(H,\vv\gamma)$ are \emph{dynamically equal}, denoted $(G,\vv\beta) \triangleq (H,\vv\gamma)$, if
\begin{align*}
\sum_{\vv y_0\to \vv y' \in E} \beta_{\vv y_0\to \vv y'}(\vv y'-\vv y_0)&=\sum_{\vv y_0 \to \vv y'\in E_H} \gamma_{\vv y_0\to \vv y'} (\vv y'-\vv y_0)\:\:\:\text{for every $\vv y_0 \in V$.}
\end{align*}
\end{definition}
We now argue why the two slightly different definitions of dynamical equality will not cause any confusion. For fixed E-graphs $G = (V,E)$ and $H = (V_H, E_H)$, let $\vv\varrho \in \rrpp^{|E|}$ and $\vv\sigma \in \rrpp^{|E_H|}$. Then, it is straightforward to see that $(G,\vv\varrho)$ and $(H,\vv\sigma)$ are dynamically equal via \Cref{def:dyn_equil_rateconstant} (where $\vv\varrho$ and $\vv\sigma$ are meant to be rate constants) if and only if $(G,\vv\varrho)$ and $(H,\vv\sigma)$ are dynamically equal via \Cref{def:dyn_equil_flux} (where $\vv\varrho$ and $\vv\sigma$ are meant to be fluxes).

We are now ready to introduce the analogue of \Cref{def:KdtG} for fluxes, and then we obtain the flux versions of \Cref{lem:semialgebraic_K} and the chain \eqref{eq:chain_K}.

\begin{definition}
\label{def:dt_flux}
For an E-graph $G=(V,E)$, define the \emph{disguised toric flux cone} of $G$ as the set
\begin{align*}
    \Fdt(G) = \{\vv\beta \in \rrpp^{|E|} \st \text{there exist an E-graph $H$ and a $\vv{\gamma} \in \Ft(H)$ such that $(G,\vv\beta) \triangleq (H,\vv\gamma)$}\}.
\end{align*}
Furthermore, we define the \emph{disguised toric flux cone of $G$ with respect to $H$} for a fixed E-graph $H$ to be
\begin{align*}
\Fdt(G,H)=\{\vv\beta \in \rrpp^{|E|}\st \text{there exists a }\vv\gamma\in\Ft(H) \text{ such that }(G,\vv\beta) \triangleq (H,\vv\gamma)\}.
\end{align*}
\end{definition}

\begin{lemma}\label{lem:semialgebraic_F}
Let $G=(V,E)$ and $H=(V_H,E_H)$ be E-graphs in $\rr^n$, and $\vv{\beta} \in \rrpp^{|E|}$. Then $\vv\beta \in \Fdt(G,H)$ if and only if there exists a $\vv\gamma\in\rrpp^{|E_H|}$ that satisfy the following dynamical equality and vertex-balancing conditions:
\begin{align}
\label{eq:DE-f}\tag{DE-f}
\sum_{\vv y_0\to \vv y' \in E} \beta_{\vv y_0\to \vv y'}(\vv y'-\vv y_0)&=\sum_{\vv y_0 \to \vv y'\in E_H} \gamma_{\vv y_0\to \vv y'} (\vv y'-\vv y_0)\:\:\:\text{for every $\vv y_0 \in V$};\\
\label{eq:VB-f}\tag{VB-f}
\sum_{\yy \to \yy_0 \in E_H} \gamma_{\yy \to \yy_0} &= \sum_{\yy_0 \to \yy' \in E_H} \gamma_{\yy_0 \to \yy'} \:\:\:\text{for every $\yy_0 \in V_H$}.
\end{align}
\end{lemma}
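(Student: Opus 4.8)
The plan is to prove both directions of the equivalence by relating the flux-based definition of $\Fdt(G,H)$ (\Cref{def:dt_flux}) directly to the two linear conditions \eqref{eq:DE-f} and \eqref{eq:VB-f}. The key observation is that \emph{both} definitions involve exactly the same two ingredients: dynamical equality between $(G,\vv\beta)$ and $(H,\vv\gamma)$, and the requirement that $\vv\gamma$ be a vertex-balanced flux on $H$. So the proof is essentially a matter of carefully unfolding the definitions and matching them up; I do not expect any genuine obstacle here, since the whole point of passing from the rate-constant formulation to the flux formulation was precisely to linearize the problem.

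First I would unpack the membership $\vv\beta \in \Fdt(G,H)$. By \Cref{def:dt_flux}, this means there exists $\vv\gamma \in \Ft(H)$ with $(G,\vv\beta) \triangleq (H,\vv\gamma)$. By the flux version of dynamical equality in \Cref{def:dyn_equil_flux}, the relation $(G,\vv\beta) \triangleq (H,\vv\gamma)$ is literally the statement that
\[
\sum_{\vv y_0\to \vv y' \in E} \beta_{\vv y_0\to \vv y'}(\vv y'-\vv y_0)=\sum_{\vv y_0 \to \vv y'\in E_H} \gamma_{\vv y_0\to \vv y'} (\vv y'-\vv y_0)
\]
holds for every $\vv y_0 \in V$, which is exactly condition \eqref{eq:DE-f}. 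Simultaneously, the condition $\vv\gamma \in \Ft(H)$ means, by the definition of the toric flux cone, that $\vv\gamma$ is a vertex-balanced flux of $H$, i.e.,
\[
\sum_{\yy \to \yy_0 \in E_H} \gamma_{\yy \to \yy_0} = \sum_{\yy_0 \to \yy' \in E_H} \gamma_{\yy_0 \to \yy'}
\]
for every $\yy_0 \in V_H$, which is exactly condition \eqref{eq:VB-f}. Thus the existence of a $\vv\gamma \in \Ft(H)$ realizing dynamical equality is identical, clause by clause, to the existence of a $\vv\gamma \in \rrpp^{|E_H|}$ satisfying \eqref{eq:DE-f} and \eqref{eq:VB-f}.

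For the converse direction I would simply reverse this reasoning: given a $\vv\gamma \in \rrpp^{|E_H|}$ satisfying \eqref{eq:DE-f} and \eqref{eq:VB-f}, condition \eqref{eq:VB-f} places $\vv\gamma$ in $\Ft(H)$ by definition, and condition \eqref{eq:DE-f} is precisely $(G,\vv\beta) \triangleq (H,\vv\gamma)$, so $\vv\beta \in \Fdt(G,H)$. The only point requiring a brief remark is that the quantified variable $\vv\gamma$ ranges over the same domain $\rrpp^{|E_H|}$ in both the definition and the lemma statement (the positivity of all flux components is built into the definition of a flux vector). Since the two sets of conditions are term-for-term identical, the equivalence follows, and the main ``work'' is just invoking the correct definitions; this lemma is the flux analogue of \Cref{lem:semialgebraic_K}, with the crucial simplification that \eqref{eq:VB-f} no longer involves the monomials $\xx^\yy$ and is therefore linear in $\vv\gamma$.
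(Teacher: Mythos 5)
Your proof is correct and matches the paper's treatment: the paper states this lemma without proof, regarding it (as it says explicitly at the start of the next paragraph block) as an immediate consequence of \Cref{def:dt_flux}, \Cref{def:dyn_equil_flux}, and the definition of $\Ft(H)$, which is exactly the clause-by-clause unwinding you carry out. Your closing remark that the quantified $\vv\gamma$ ranges over $\rrpp^{|E_H|}$ in both formulations is the only point worth making explicit, and you made it.
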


For any E-graph $G$, we have a flux-analog to the inclusions in \eqref{eq:chain_K}, namely 
\begin{align} \label{eq:chain_F}
\Ft(G) \subseteq \Fdt(G) \subseteq \Feq(G) \subseteq \rrpp^{|E|},
\end{align}
and if $\Feq(G)$ is nonempty, it holds that $\dim \Feq(G)=|E|-\dim\mc{S}$.

Next, we relate the nonemptiness of the disguised toric flux cone and the nonemptiness of the disguised toric locus. Further, the equality of $\Fdt(G)$ and $\Feq(G)$ (see the chain \eqref{eq:chain_F}) is described in terms of the rate constant versions of the same objects.
\begin{proposition} \label{prop:Fdt_iff_Kdt}
Let $G$ and $H$ be E-graphs. Then the following statements hold.
\begin{enumerate}[label={\rm(\alph*)}]
    \item We have $\Fdt(G,H) \neq \emptyset$ if and only if $\Kdt(G,H) \neq \emptyset$. 
    \item We have $\Fdt(G) \neq \emptyset$ if and only if $\Kdt(G) \neq \emptyset$.
    \item We have $\Fdt(G) = \Feq(G)$ if and only if $\Kdt(G) = \Keq(G)$.
\end{enumerate}
\end{proposition}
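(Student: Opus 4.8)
The plan is to build a dictionary between rate constants and fluxes through the monomial substitution $\beta_{\yy\to\yy'}=\kk_{\yy\to\yy'}\xx^{\yy}$ (and likewise $\gamma_{\yy\to\yy'}=\lambda_{\yy\to\yy'}\xx^{\yy}$), under which the conditions \eqref{eq:DE} and \eqref{eq:VB} that define $\Kdt(G,H)$ in \Cref{lem:semialgebraic_K} become the conditions \eqref{eq:DE-f} and \eqref{eq:VB-f} that define $\Fdt(G,H)$ in \Cref{lem:semialgebraic_F}. Concretely, multiplying the per-source-vertex form of \eqref{eq:DE} by the positive number $\xx^{\yy_0}$ turns it into \eqref{eq:DE-f}, while \eqref{eq:VB} becomes exactly \eqref{eq:VB-f}; conversely, every flux is recovered by taking $\xx=\vv{1}$. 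Since all factors $\xx^{\yy}$ are positive this correspondence is reversible, and it drives all three parts.

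For part (a) I would argue both implications directly from the two lemmas. If $\Fdt(G,H)\neq\emptyset$, pick $\vv\beta\in\Fdt(G,H)$ with witness $\vv\gamma$ satisfying \eqref{eq:DE-f} and \eqref{eq:VB-f}; then $(\vv\kk,\vv\lambda,\xx)=(\vv\beta,\vv\gamma,\vv{1})$ satisfies \eqref{eq:DE} and \eqref{eq:VB}, so $\vv\beta\in\Kdt(G,H)$. Conversely, given $\vv\kk\in\Kdt(G,H)$ with witnesses $\vv\lambda,\xx$, the rescaled pair $\beta_{\yy\to\yy'}=\kk_{\yy\to\yy'}\xx^{\yy}$, $\gamma_{\yy\to\yy'}=\lambda_{\yy\to\yy'}\xx^{\yy}$ satisfies \eqref{eq:DE-f} and \eqref{eq:VB-f}, so $\Fdt(G,H)\neq\emptyset$. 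Part (b) is then immediate by taking unions: since $\Kdt(G)=\bigcup_H\Kdt(G,H)$ and $\Fdt(G)=\bigcup_H\Fdt(G,H)$ by the definitions, nonemptiness of $\Fdt(G)$ is equivalent to nonemptiness of some $\Fdt(G,H)$, which by part (a) is equivalent to nonemptiness of the corresponding $\Kdt(G,H)$, equivalently of $\Kdt(G)$.

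Part (c) carries the real content, and I would split each of $\Fdt(G)=\Feq(G)$ and $\Kdt(G)=\Keq(G)$ into two inclusions, using the chains \eqref{eq:chain_F} and \eqref{eq:chain_K} to reduce each equality to a single reverse inclusion. For the forward direction, assume $\Fdt(G)=\Feq(G)$ and take $\vv\kk\in\Keq(G)$ with a positive equilibrium $\xx^{*}$. The flux $\vv\beta=(\kk_{\yy\to\yy'}(\xx^{*})^{\yy})$ satisfies $\sum_{\yy\to\yy'\in E}\beta_{\yy\to\yy'}(\yy'-\yy)=\vv f_{(G,\vv\kk)}(\xx^{*})=\vv{0}$, so $\vv\beta\in\Feq(G)=\Fdt(G)$, hence $(G,\vv\beta)\triangleq(H,\vv\gamma)$ for some $\vv\gamma\in\Ft(H)$. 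Setting $\lambda_{\yy\to\yy'}=\gamma_{\yy\to\yy'}(\xx^{*})^{-\yy}$ makes $\xx^{*}$ a vertex-balanced equilibrium of $(H,\vv\lambda)$ and (after dividing the flux equality by $(\xx^{*})^{\yy_0}$) yields $(G,\vv\kk)\triangleq(H,\vv\lambda)$ as rate constants, so $\vv\kk\in\Kdt(G)$.

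The main obstacle is the backward direction of part (c): one is tempted to run the substitution in reverse, but this is blocked because $\Fdt(G)$ is \emph{not} invariant under the monomial rescaling $\vv\beta\mapsto(\beta_{\yy\to\yy'}\xx^{\yy})$, since such a rescaling destroys vertex-balancedness of the witnessing $\vv\gamma$. I would bypass this by evaluating at the distinguished point $\xx=\vv{1}$. Given $\vv\beta\in\Feq(G)$, reinterpret it as rate constants $\vv\kk:=\vv\beta$; then $\vv f_{(G,\vv\kk)}(\vv{1})=\sum_{\yy\to\yy'\in E}\beta_{\yy\to\yy'}(\yy'-\yy)=\vv{0}$, so $\vv{1}$ is an equilibrium and $\vv\kk\in\Keq(G)=\Kdt(G)$. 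Thus $(G,\vv\kk)\triangleq(H,\vv\lambda)$ for some $\vv\lambda\in\Kt(H)$, and $\vv f_{(H,\vv\lambda)}(\vv{1})=\vv f_{(G,\vv\kk)}(\vv{1})=\vv{0}$ shows $\vv{1}$ is a positive equilibrium of the vertex-balanced system $(H,\vv\lambda)$. The crucial input is \Cref{thm:nice_dynamics}(a): every positive equilibrium of a vertex-balanced system is itself vertex-balanced, so evaluating the vertex-balancing identity at $\vv{1}$ gives exactly $\vv\lambda\in\Ft(H)$ as a flux. Since the rate-constant equality $(G,\vv\kk)\triangleq(H,\vv\lambda)$ coincides with the flux equality $(G,\vv\beta)\triangleq(H,\vv\lambda)$, we obtain $\vv\beta\in\Fdt(G)$, hence $\Feq(G)\subseteq\Fdt(G)$, which closes the argument.
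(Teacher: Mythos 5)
Your proof is correct, and it is essentially the argument the paper intends: the paper never writes out a proof of \Cref{prop:Fdt_iff_Kdt}, remarking instead that ``every statement in this section was an immediate consequence of the definitions and the statements discussed in \Cref{sec:prelim}.'' Your writeup is the natural fleshing-out of that claim via the monomial dictionary $\beta_{\yy\to\yy'}=\kk_{\yy\to\yy'}\xx^{\yy}$, and you correctly identify the one genuinely nontrivial ingredient hiding behind the word ``immediate'': in the backward direction of (c), the witnessing vertex-balanced equilibrium of $(H,\vv\lambda)$ need not be $\vv 1$, and the gap is closed exactly by \Cref{thm:nice_dynamics}(a) (every positive equilibrium of a vertex-balanced system is itself vertex-balanced), which is indeed one of the Section~2 statements the authors allude to.
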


The flux analogue of \Cref{thm:G_max_enough_K} states that for finding $\Fdt(G)$, it suffices to compute $\Fdt(G,H)$ for $H \subseteq G^{\max}$.

\begin{theorem}
\label{thm:G_max_enough_F}
For any E-graph $G$, we have
\begin{align*}
\Fdt(G) = \bigcup_{H\subseteq G^{\max}} \Fdt(G,H) = \bigcup_{H\in \Hwr(G)} \Fdt(G,H).
\end{align*}
\end{theorem}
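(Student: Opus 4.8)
The plan is to prove the two claimed equalities by establishing a chain of inclusions, leveraging the fact that $G^{\max}$ is the unique maximal element of $(\Hwr(G),\subseteq)$ together with the monotonicity of $\Fdt(G,H)$ under subgraph inclusion. The middle union, $\bigcup_{H\subseteq G^{\max}}\Fdt(G,H)$, ranges over \emph{all} subgraphs of $G^{\max}$, whereas the right union, $\bigcup_{H\in\Hwr(G)}\Fdt(G,H)$, ranges only over the weakly reversible realization subgraphs; showing these agree, and that both equal $\Fdt(G)$, is the goal.

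First I would show the inclusion $\Fdt(G)=\bigcup_{H}\Fdt(G,H)$ where $H$ ranges over all subgraphs of $G^{\comp}$, which is the flux analogue of \eqref{eq:src_vertices_enough_K}. By \Cref{def:dt_flux}, any $\vv\beta\in\Fdt(G)$ is witnessed by \emph{some} E-graph $H'$ and $\vv\gamma\in\Ft(H')$ with $(G,\vv\beta)\triangleq(H',\vv\gamma)$. The key input here is the result of \cite{craciun:jin:yu:2020}, cited at the start of \Cref{sec:G_max}, which guarantees that one may take the realizing graph to be supported on the source vertices of $G$, i.e.\ a subgraph of $G^{\comp}$; discarding edges of $H'$ that carry zero flux in a vertex-balanced realization (or otherwise reducing to the source-vertex complete graph) gives a subgraph $H\subseteq G^{\comp}$ realizing the same flux. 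I would also note the elementary containment $\Fdt(G,H)\subseteq\Fdt(G)$ for every $H$, which is immediate from the definitions.

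Next I would reduce the index set from all $H\subseteq G^{\comp}$ to $\Hwr(G)$. The point is that if $\Fdt(G,H)\neq\emptyset$ then by \Cref{lem:semialgebraic_F} there is a $\vv\gamma\in\Ft(H)$ with $(G,\vv\beta)\triangleq(H,\vv\gamma)$; since $\vv\beta\in\rrpp^{|E|}$ and $\vv\gamma\in\rrpp^{|E_H|}$ witness that $G$ admits a realization with respect to $H$ via genuinely positive fluxes, and since $\Ft(H)\neq\emptyset$ forces $H$ to be weakly reversible (a vertex-balanced flux can exist only on a weakly reversible graph, analogous to the remark after \Cref{def:vertex-balanced}), we get $H\in\Hwr(G)$. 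Thus only graphs in $\Hwr(G)$ contribute nonempty sets to the union over $H\subseteq G^{\comp}$, giving $\Fdt(G)=\bigcup_{H\in\Hwr(G)}\Fdt(G,H)$ and establishing the right-hand equality. Here it is worth mirroring the argument in \Cref{lem:Kdt_G_H_nonempty}(a) to confirm that $\Fdt(G,H)\neq\emptyset$ is equivalent to $H\in\Hwr(G)$.

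Finally I would sandwich the middle union between the two established expressions. For the inclusion $\bigcup_{H\in\Hwr(G)}\Fdt(G,H)\subseteq\bigcup_{H\subseteq G^{\max}}\Fdt(G,H)$, observe that every $H\in\Hwr(G)$ satisfies $H\subseteq G^{\max}$ by \Cref{def:Gmax}, so each term on the left already appears on the right. For the reverse inclusion $\bigcup_{H\subseteq G^{\max}}\Fdt(G,H)\subseteq\Fdt(G)$, use the elementary containment $\Fdt(G,H)\subseteq\Fdt(G)$ noted above. Combining these with the two equalities from the earlier steps closes the cycle of inclusions and yields both claimed identities. The main obstacle, and the step deserving the most care, is the first one: justifying that the realizing graph can always be taken inside $G^{\comp}$, and correctly invoking \cite{craciun:jin:yu:2020} in the flux setting rather than the rate-constant setting — the translation between the two is handled by the equivalence of the two notions of dynamical equality discussed after \Cref{def:dyn_equil_flux}, so I would make that bridge explicit. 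The remaining steps are purely set-theoretic manipulations of unions and the maximality of $G^{\max}$.
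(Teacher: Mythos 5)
Your overall skeleton --- reduce to subgraphs of $G^{\comp}$ via \cite{craciun:jin:yu:2020}, discard the graphs with $\Fdt(G,H)=\emptyset$ (equivalently, those not in $\Hwr(G)$), and close the cycle of inclusions using the maximality of $G^{\max}$ together with the trivial containment $\Fdt(G,H)\subseteq\Fdt(G)$ --- is exactly the route the paper intends: the paper records no separate proof, treating the statement as an immediate consequence of the definitions, of \eqref{eq:src_vertices_enough_K}, and of \Cref{lem:Kdt_G_H_nonempty} (via \Cref{prop:Fdt_iff_Kdt}). Your steps 2 and 3 are correct as written.

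The gap is in step 1, in the bridge from the rate-constant statement of \cite{craciun:jin:yu:2020} to the flux statement; the equivalence of the two notions of dynamical equality is \emph{not} by itself enough. Reading $\vv\beta$ and its witness $\vv\gamma\in\Ft(H')$ as rate constants does give $\vv\beta\in\Kdt(G,H')$ (the point $\vv 1$ is a vertex-balanced equilibrium of $(H',\vv\gamma)$), and \eqref{eq:src_vertices_enough_K} then produces $H\subseteq G^{\comp}$ and $\vv\lambda\in\Kt(H)$ with $(G,\vv\beta)\triangleq(H,\vv\lambda)$. But $\vv\lambda\in\Kt(H)$ only means that $(H,\vv\lambda)$ is vertex-balanced at \emph{some} $\xx^*\in\rrpp^n$, which need not be $\vv 1$; the vector $\vv\lambda$ itself therefore need not lie in $\Ft(H)$, and the natural rescaling $\gamma_{\yy\to\yy'}:=\lambda_{\yy\to\yy'}(\xx^*)^{\yy}$ destroys dynamical equality with the fixed $\vv\beta$, since it multiplies the net outflow at each source vertex $\yy_0$ by $(\xx^*)^{\yy_0}$. (Also, your parenthetical about ``discarding edges of $H'$ that carry zero flux'' is vacuous: every coordinate of $\vv\gamma$ is positive by definition, and the vertices of $H'$ need not even be source vertices of $G$, so no edge-deletion argument reaches $G^{\comp}$; the reduction really is the content of the cited theorem.) The missing ingredient is \Cref{thm:nice_dynamics}(a): since $\vv{f}_{(H,\vv\lambda)}=\vv{f}_{(G,\vv\beta)}=\vv{f}_{(H',\vv\gamma)}$ and $(H',\vv\gamma)$ is vertex-balanced at $\vv 1$, the point $\vv 1$ is a positive equilibrium of the toric system $(H,\vv\lambda)$, hence is itself a vertex-balanced equilibrium of it; consequently $\vv\lambda$, read as a flux vector, does lie in $\Ft(H)$, and $(G,\vv\beta)\triangleq(H,\vv\lambda)$ then witnesses $\vv\beta\in\Fdt(G,H)$. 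With this one extra invocation your step 1, and hence the whole argument, is complete.
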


Up to this point, every statement in this section was an immediate consequence of the definitions and the statements discussed in \Cref{sec:prelim}. The rest of this section deals with the conic structure of the disguised toric flux cone (\Cref{lem:polyhedral_cones}), the relation of $\Fdt(G,H_1)$ and $\Fdt(G,H_2)$ when $H_1 \subseteq H_2$ (\Cref{lem:H1_subgraph_H2}), and the connection between $\Fdt(G)$ and $\Fdt(G,G^{\max})$ (\Cref{thm:FdtG_is_clFdtGGmax}).

\begin{lemma}
\label{lem:polyhedral_cones}
Let $G$ be an E-graph, and let $H \in \Hwr(G)$.
Then the following statements hold.
\begin{enumerate}[label={\rm(\alph*)}]
\item The closure $\overline{\Fdt(G)}$ is a pointed polyhedral cone, and $\Fdt(G)=(\overline{\Fdt(G)})_{>0}$. \\
In particular, $\Fdt(G)$ is convex and is relatively closed in $\rrpp^{|E|}$.
\item The closure $\overline{\Fdt(G,H)}$ is a pointed polyhedral cone, and  $\Fdt(G,H) = \relint(\overline{\Fdt(G,H)})$.\\
In particular, $\Fdt(G,H)$ is convex and is an open polyhedral cone.
\end{enumerate}
\end{lemma}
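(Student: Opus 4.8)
The plan is to analyze the two sets $\Fdt(G)$ and $\Fdt(G,H)$ through the linear description provided by \Cref{lem:semialgebraic_F}, which expresses membership via the existence of a vertex-balanced flux $\vv\gamma$ on $H$ satisfying the linear conditions \eqref{eq:DE-f} and \eqref{eq:VB-f}. The key observation is that both sets are projections of polyhedral cones: the conditions \eqref{eq:DE-f}, \eqref{eq:VB-f}, together with positivity of $\vv\gamma$, cut out a polyhedral cone in the $(\vv\beta,\vv\gamma)$-space, and $\Fdt(G,H)$ is its image under the coordinate projection onto $\vv\beta$.

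\textbf{Part (b).} First I would set up the cone $C_H = \{(\vv\beta,\vv\gamma) \in \rr^{|E|}\times\rr^{|E_H|} \st \vv\beta \geq \vv 0,\ \vv\gamma \geq \vv 0,\ \eqref{eq:DE-f},\ \eqref{eq:VB-f}\}$, which is a closed polyhedral cone since it is defined by finitely many linear (in)equalities. The set $\Fdt(G,H)$ is then the image of $C_H \cap \{\vv\gamma > \vv 0\}$ under projection to $\vv\beta$, intersected with $\rrpp^{|E|}$. By the Fourier--Motzkin elimination / Minkowski--Weyl theorem, the projection of a polyhedral cone is again polyhedral, so $\overline{\Fdt(G,H)}$ is a polyhedral cone; pointedness follows because $\Feq(G)$ is pointed (it lies in $\rrp^{|E|}$) and $\overline{\Fdt(G,H)} \subseteq \overline{\Feq(G)}$ by the chain \eqref{eq:chain_F}. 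The more delicate claim is the identity $\Fdt(G,H) = \relint(\overline{\Fdt(G,H)})$. The inclusion $\subseteq$ should follow from showing $\Fdt(G,H)$ is open in its affine hull: given $\vv\beta \in \Fdt(G,H)$ with witness $\vv\gamma > \vv 0$ strictly vertex-balanced on the weakly reversible graph $H$, small perturbations of $\vv\beta$ within the affine hull can be matched by perturbing $\vv\gamma$ while keeping it strictly positive and vertex-balanced — here I would use that $H \in \Hwr(G)$ guarantees the existence of at least one strictly positive vertex-balanced flux, and that the linear system is solvable on an open neighborhood. For $\supseteq$, I would argue that every relative-interior point is attainable because relative-interior points of a projected cone are images of relative-interior points of $C_H$, where $\vv\gamma$ can be taken strictly positive.

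\textbf{Part (a).} For $\Fdt(G)$, I would invoke \Cref{thm:G_max_enough_F}, which says $\Fdt(G) = \bigcup_{H \subseteq G^{\max}} \Fdt(G,H)$, a finite union. The subtlety is that a finite union of (relatively open) cones need not be convex or relatively closed, so convexity must be argued separately. I would establish convexity directly from the definition: if $\vv\beta_1, \vv\beta_2 \in \Fdt(G)$ with witnesses $(H_1,\vv\gamma_1)$ and $(H_2,\vv\gamma_2)$, then for the union graph $H_1 \cup H_2$, the vector $\vv\gamma_1 + \vv\gamma_2$ (suitably extended by zeros) is vertex-balanced and realizes $\vv\beta_1 + \vv\beta_2$, so $\Fdt(G)$ is closed under addition; combined with scaling, this gives convexity. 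Then $\overline{\Fdt(G)}$ is a convex set that is also polyhedral (as the closure of a finite union of polyhedral cones that happens to be convex), and pointedness again descends from $\Feq(G)$. Finally, the identity $\Fdt(G) = (\overline{\Fdt(G)})_{>0}$ should follow by showing that any boundary point of $\overline{\Fdt(G)}$ lying in $\rrpp^{|E|}$ is itself realizable — intuitively, a strictly positive flux on $G$ that is a limit of disguised toric fluxes can be realized using $G^{\max}$, so strict positivity of $\vv\beta$ is the only constraint separating $\Fdt(G)$ from its closure.

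\textbf{The main obstacle} I anticipate is the identity $\Fdt(G,H) = \relint(\overline{\Fdt(G,H)})$ in part (b), specifically the inclusion showing that $\Fdt(G,H)$ is \emph{relatively open}. The difficulty is that the witness $\vv\gamma$ must remain strictly positive and strictly vertex-balanced under perturbation, and the projection map can collapse dimensions; one must verify that perturbing $\vv\beta$ within the affine hull of $\overline{\Fdt(G,H)}$ lifts to an admissible perturbation of $\vv\gamma$ staying in the open cone $\{\vv\gamma > \vv 0\}$. This is where the weak reversibility of $H$ (guaranteeing a nonempty relative interior of $\Ft(H)$, i.e.\ a strictly positive vertex-balanced flux) is essential, and I would expect to need a careful linear-algebra argument about the fibers of the projection restricted to $C_H$.
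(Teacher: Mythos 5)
Your proposal is correct and, for part (b), essentially coincides with the paper's proof: the paper introduces the same lifted cone (your $C_H$, there called $\mc{X}(G,H)$), observes that it is the intersection of a linear subspace with $\rrp^{|E|}\times\rrp^{|E_H|}$ and hence a pointed polyhedral cone, and that its projection equals $\overline{\Fdt(G,H)}$. The \emph{main obstacle} you flag — lifting perturbations of $\vv\beta$ to strictly positive perturbations of $\vv\gamma$ — needs no fiber analysis; it dissolves under two standard convexity facts, which is how the paper argues. First, since $H \in \Hwr(G)$ supplies at least one strictly positive point of $C_H$, the strictly positive points of $C_H$ are \emph{exactly} $\relint(C_H)$: positive points are relatively interior because $C_H$ is a linear slice of the nonnegative orthant, and conversely, for $\vv{w} \in \relint(C_H)$ and a positive $\vv{z} \in C_H$ one has $(1+\varepsilon)\vv{w} - \varepsilon \vv{z} \in C_H \subseteq \rrp^{|E|}\times\rrp^{|E_H|}$ for small $\varepsilon>0$, forcing $\vv{w}>0$. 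Second, relative interiors commute with linear images of convex sets, so $\pi(\relint C_H) = \relint \pi(C_H)$. Together these give $\Fdt(G,H) = \pi((C_H)_{>0}) = \pi(\relint C_H) = \relint(\overline{\Fdt(G,H)})$, i.e., both inclusions at once; your explicit treatment of the $\supseteq$ direction is in fact a direction the paper's written proof leaves implicit.

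For part (a) you take a genuinely different route. The paper avoids the union decomposition of \Cref{thm:G_max_enough_F} altogether: it forms a single cone $\mc{X}(G)$ of pairs $(\vv\beta,\vv\gamma)$ satisfying \eqref{eq:DE-f} and \eqref{eq:VB-f} with $H=G^{\max}$, but with $\vv\gamma$ allowed to have \emph{vanishing} coordinates; the zero pattern of $\vv\gamma$ then encodes all subgraphs $H\subseteq G^{\max}$ simultaneously, so $\overline{\Fdt(G)}=\pi(\mc{X}(G))$ is convex, polyhedral, and pointed in one stroke. Your route — convexity of $\Fdt(G)$ via adding fluxes on the union graph $H_1\cup H_2$ (this is exactly the paper's observation, before \Cref{def:Gmax}, that $\Hwr(G)$ is closed under unions), followed by the fact that a convex finite union of polyhedral cones is polyhedral (the convex hull of a union of cones is their Minkowski sum, hence finitely generated) — is valid but more roundabout. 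Finally, note that both your sketch and the paper's ``obvious'' claim $\Fdt(G)=(\overline{\Fdt(G)})_{>0}$ rest on the same small support-restriction argument, which is worth writing down: if $\vv\beta>0$ admits a witness $\vv\gamma\geq 0$ on $G^{\max}$, restrict $\vv\gamma$ to its support $H'$; the restriction is a strictly positive vertex-balanced flux on $H'$, so $\vv\beta\in\Fdt(G,H')\subseteq\Fdt(G)$.
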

\begin{proof}
Denote by $E$, $E^{\max}$, and $E_H$ the edge sets of $G$, $G^{\max}$, and $H$, respectively.

To prove (a), let $\mathcal{X}(G)$ be the collection of tuples $(\vv \beta,\vv \gamma) \in \rrp^{|E|} \times \rrp^{|E^{\max}|}$ that satisfy \eqref{eq:DE-f} and \eqref{eq:VB-f} in \Cref{lem:semialgebraic_F} with $H=G^{\max}$ (notice that here both $\vv\beta$ and $\vv\gamma$ are allowed to have vanishing coordinates). Then
\begin{align*}
\overline{\Fdt(G)} = \pi(\mc{X}(G)),
\end{align*}
where $\pi \colon \rr^{|E|} \times \rr^{|E^{\max}|} \to \rr^{|E|}$ is the projection to the first factor. Since $\mathcal{X}(G)$ is the intersection of a linear subspace and the pointed polyhedral cone $\rrp^{|E|}\times\rrp^{|E^{\max}|}$, it is itself a pointed polyhedral cone, as well as its projection, $\overline{\Fdt(G)}$. It is obvious that $\Fdt(G)=(\overline{\Fdt(G)})_{>0}$.

To prove (b), let $\mathcal{X}(G,H)$ be the collection of tuples $(\vv \beta,\vv \gamma) \in \rrp^{|E|} \times \rrp^{|E_H|}$ that satisfy \eqref{eq:DE-f} and \eqref{eq:VB-f} in \Cref{lem:semialgebraic_F} (notice that here both $\vv\beta$ and $\vv\gamma$ are allowed to have vanishing coordinates). Then
\begin{align*}
\overline{\Fdt(G,H)} = \pi(\mc{X}(G,H)),
\end{align*}
where $\pi \colon \rr^{|E|} \times \rr^{|E_H|} \to \rr^{|E|}$ is the projection to the first factor. Since $\mathcal{X}(G, H)$ is the intersection of a linear subspace and the pointed polyhedral cone $\rrp^{|E|}\times\rrp^{|E_H|}$, it is a pointed polyhedral cone itself, as well as its projection, $\overline{\Fdt(G, H)}$. Finally, since for any $\vv\beta \in \Fdt(G, H)$ there exists a $\vv\gamma \in \rrpp^{|E_H|}$ such that $(\vv\beta, \vv\gamma) \in \mc{X}(G, H)$, and any such pair $(\vv\beta, \vv\gamma)$ is in the relative interior of the polyhedral cone $\mc{X}(G, H)$, the point $\vv\beta$ is in the relative interior of the projection, i.e., $\vv\beta \in \relint(\overline{\Fdt(G, H)})$.
\end{proof}

Next, we examine the relation between the disguised toric flux cones $\Fdt(G, H_1)$ and $\Fdt(G, H_2)$, where $H_1$ is a subgraph of $H_2$.

\begin{lemma} \label{lem:H1_subgraph_H2}
Let $G$ be an E-graph, and let $H_1, H_2 \in \Hwr(G)$ be such that $H_1 \subseteq H_2$.
Then $\Fdt(G,H_1) \subseteq \overline{\Fdt(G,H_2)}$.
\end{lemma}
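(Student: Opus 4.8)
The plan is to prove the inclusion $\Fdt(G,H_1) \subseteq \overline{\Fdt(G,H_2)}$ by taking an arbitrary flux $\vv\beta \in \Fdt(G,H_1)$ and producing a sequence of elements of $\Fdt(G,H_2)$ converging to it. By \Cref{lem:semialgebraic_F}, the membership $\vv\beta \in \Fdt(G,H_1)$ gives us a vertex-balanced flux $\vv\gamma \in \Ft(H_1) \subseteq \rrpp^{|E_{H_1}|}$ with $(G,\vv\beta) \triangleq (H_1,\vv\gamma)$. Since $H_1 \subseteq H_2$, I can view $\vv\gamma$ as a flux on $H_2$ by extending it with zeros on the edges in $E_{H_2} \setminus E_{H_1}$; call this extension $\overline{\vv\gamma} \in \rrp^{|E_{H_2}|}$. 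The key observations are that $\overline{\vv\gamma}$ is still vertex-balanced on $H_2$ (adding zero-weight edges changes neither inflow nor outflow at any vertex), and that dynamical equality is preserved, since the zero-weighted new edges contribute nothing to the reaction-vector sum. The only obstruction to concluding immediately is that $\overline{\vv\gamma}$ lies on the \emph{boundary} of $\rrp^{|E_{H_2}|}$ rather than in $\rrpp^{|E_{H_2}|}$, so $(\vv\beta, \overline{\vv\gamma})$ need not certify $\vv\beta \in \Fdt(G,H_2)$ itself — hence the closure on the right-hand side.

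To repair the strict positivity, I would perturb $\overline{\vv\gamma}$ by adding a small strictly positive vertex-balanced flux supported on all of $H_2$. Since $H_2 \in \Hwr(G)$ is weakly reversible, \Cref{thm:FH1977} and the nonemptiness results for weakly reversible graphs guarantee the existence of a strictly positive vertex-balanced flux $\vv\delta \in \Ft(H_2) \cap \rrpp^{|E_{H_2}|}$ (concretely, $\vv\delta$ can be taken as the vertex-balanced flux arising from a positive equilibrium of $(H_2, \vv1)$, or assembled from the cycle structure of each strongly connected component). Then for each $\varepsilon > 0$ the flux $\overline{\vv\gamma} + \varepsilon\vv\delta$ is strictly positive and vertex-balanced, hence in $\Ft(H_2)$. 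Setting $\vv\beta_\varepsilon$ so that $(G,\vv\beta_\varepsilon) \triangleq (H_2, \overline{\vv\gamma} + \varepsilon\vv\delta)$ via the linear map from \eqref{eq:DE-f} realizes $\vv\beta_\varepsilon$ in $\Fdt(G,H_2)$, provided $\vv\beta_\varepsilon$ can be kept strictly positive.

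The main technical point, and the step I expect to require the most care, is ensuring that the perturbed flux on $G$ remains strictly positive so that it genuinely lies in $\Fdt(G,H_2) \subseteq \rrpp^{|E|}$. The dynamical-equality condition \eqref{eq:DE-f} only fixes the image $\sum_{\yy \to \yy'} \beta_{\yy\to\yy'}(\yy'-\yy)$ in the stoichiometric subspace, so $\vv\beta_\varepsilon$ is not uniquely determined by $\overline{\vv\gamma} + \varepsilon\vv\delta$; I have freedom in how to distribute the fluxes across the edges of $G$. I would exploit this by choosing $\vv\beta_\varepsilon$ to depend continuously on $\varepsilon$ with $\vv\beta_0 = \vv\beta$, which is possible because $\vv\beta$ itself is strictly positive and the realizability constraint is an affine (hence continuous) condition; since $\rrpp^{|E|}$ is open, $\vv\beta_\varepsilon \in \rrpp^{|E|}$ for all sufficiently small $\varepsilon > 0$. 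This yields a sequence $\vv\beta_{\varepsilon} \in \Fdt(G,H_2)$ with $\vv\beta_\varepsilon \to \vv\beta$ as $\varepsilon \to 0^+$, establishing $\vv\beta \in \overline{\Fdt(G,H_2)}$.

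A cleaner alternative that sidesteps the distribution issue entirely is to work at the level of the cone $\overline{\Fdt(G,H_2)} = \pi(\mc{X}(G,H_2))$ from the proof of \Cref{lem:polyhedral_cones}(b). The pair $(\vv\beta, \overline{\vv\gamma})$ lies in $\mc{X}(G,H_2)$ because it satisfies \eqref{eq:DE-f} and \eqref{eq:VB-f} with nonnegative (not necessarily positive) entries, so $\vv\beta = \pi(\vv\beta, \overline{\vv\gamma}) \in \pi(\mc{X}(G,H_2)) = \overline{\Fdt(G,H_2)}$ directly, with no limiting argument needed. I would present this as the primary argument, since it reduces the lemma to the already-established identification of $\overline{\Fdt(G,H_2)}$ as the projection of $\mc{X}(G,H_2)$, and reserve the explicit perturbation construction only as intuition for why the closure cannot be dropped.
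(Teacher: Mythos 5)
Your primary argument---extending $\vv\gamma$ by zeros to $\overline{\vv\gamma}$, observing that $(\vv\beta,\overline{\vv\gamma})\in\mc{X}(G,H_2)$ since the zero-weighted edges affect neither \eqref{eq:DE-f} nor \eqref{eq:VB-f}, and projecting via $\overline{\Fdt(G,H_2)}=\pi(\mc{X}(G,H_2))$ from \Cref{lem:polyhedral_cones}(b)---is exactly the paper's proof, and it is correct. Making that the primary argument is also the right call: the perturbation sketch you relegate to intuition has a genuine gap, since a realization $\vv\beta_\varepsilon$ on $G$ of the perturbed flux $\overline{\vv\gamma}+\varepsilon\vv\delta$ need not exist at all (the perturbed per-vertex sums can leave the span of the edge vectors of $G$ at a given source vertex, so the issue is not merely keeping $\vv\beta_\varepsilon$ positive).
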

\begin{proof}
Denote by $E_{H_1}$ and $E_{H_2}$ the edge sets of $H_1$ and $H_2$, respectively. For any $\vv\beta \in \Fdt(G,H_1)$ there exists a $\vv\gamma_1 \in \rrpp^{|E_{H_1}|}$ such that $(G,\vv\beta) \triangleq (H_1,\vv\gamma_1)$ and $(H_1,\vv\gamma_1)$ is vertex-balanced. For any such pair $(\vv\beta, \vv\gamma_1)$ define $\vv\gamma_2 \in \rrp^{|E_{H_2}|}$ such that $\vv \gamma_1$ and $\vv\gamma_2$ agree on the coordinates referring to edges in $E_{H_1}$, while the coordinates of $\vv\gamma_2$ referring to the edges in $E_{H_2} \setminus E_{H_1}$ vanish. Then $(\vv\beta, \vv\gamma_2) \in \mc{X}(G,H_2)$, where $\mc{X}(G,H_2)$ is the pointed polyhedral cone defined in the proof of \Cref{lem:polyhedral_cones}(b) above. Hence, with $\pi$ denoting the projection from that same proof, it follows that $\vv\beta \in \pi(\mc{X}(G, H_2))$. Since $\overline{\Fdt(G,H_2)} = \pi(\mc{X}(G,H_2))$, this concludes the proof.
\end{proof}

Next, we state and prove the main result of this section; it highlights the distinguished role that the maximal graph $G^{\max}$ plays.

\begin{theorem} \label{thm:FdtG_is_clFdtGGmax}
Let $G = (V, E)$ be an E-graph. Then
\begin{align*}
\Fdt(G) = (\overline{\Fdt(G,G^{\max})})_{>0} \quad \text{and} \quad \relint\Fdt(G) = \Fdt(G,G^{\max}).
\end{align*}
In particular, $\dim\Fdt(G) = \dim\Fdt(G,G^{\max})$.
\end{theorem}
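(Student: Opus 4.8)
The plan is to deduce both equalities from the polyhedral structure established in \Cref{lem:polyhedral_cones}, together with the containment in \Cref{lem:H1_subgraph_H2}, with the maximality of $G^{\max}$ among the elements of $\Hwr(G)$ doing the essential work. First I would prove the equality $\Fdt(G) = (\overline{\Fdt(G,G^{\max})})_{>0}$. For the inclusion $\subseteq$, I would invoke \Cref{thm:G_max_enough_F} to write $\Fdt(G) = \bigcup_{H \in \Hwr(G)} \Fdt(G,H)$; since every $H \in \Hwr(G)$ satisfies $H \subseteq G^{\max}$, \Cref{lem:H1_subgraph_H2} gives $\Fdt(G,H) \subseteq \overline{\Fdt(G,G^{\max})}$ for each such $H$, and intersecting with $\rrpp^{|E|}$ yields $\Fdt(G) \subseteq (\overline{\Fdt(G,G^{\max})})_{>0}$. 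For the reverse inclusion I would use the evident monotonicity $\overline{\Fdt(G,G^{\max})} \subseteq \overline{\Fdt(G)}$, so that $(\overline{\Fdt(G,G^{\max})})_{>0} \subseteq (\overline{\Fdt(G)})_{>0} = \Fdt(G)$ by \Cref{lem:polyhedral_cones}(a).

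Next I would establish the second equality $\relint\Fdt(G) = \Fdt(G,G^{\max})$. The crux is to show that the two closures agree, i.e., $\overline{\Fdt(G)} = \overline{\Fdt(G,G^{\max})}$. This follows from the chain of containments $\Fdt(G,G^{\max}) \subseteq \Fdt(G) \subseteq \overline{\Fdt(G,G^{\max})}$: the first holds because $G^{\max}$ is itself a subgraph of $G^{\max}$, so \Cref{thm:G_max_enough_F} includes its contribution, and the second is the equality of sets just proved. Taking closures throughout sandwiches $\overline{\Fdt(G)}$ between $\overline{\Fdt(G,G^{\max})}$ and itself, yielding the claimed equality of closures.

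Finally, since $\Fdt(G)$ is convex by \Cref{lem:polyhedral_cones}(a), I would apply the standard convex-analysis fact $\relint(A) = \relint(\overline{A})$ with $A = \Fdt(G)$ to obtain $\relint\Fdt(G) = \relint(\overline{\Fdt(G)}) = \relint(\overline{\Fdt(G,G^{\max})}) = \Fdt(G,G^{\max})$, where the last step is again \Cref{lem:polyhedral_cones}(b). The dimension claim then follows immediately, since a convex set and its relative interior share the same affine hull and hence the same dimension. I expect the main obstacle to be precisely the identification of the closures: one must rule out the possibility that some strictly smaller realization graph $H \subsetneq G^{\max}$ contributes fluxes to $\Fdt(G)$ lying outside $\overline{\Fdt(G,G^{\max})}$, and it is exactly \Cref{lem:H1_subgraph_H2}, relying on the maximality of $G^{\max}$, that forecloses this and makes the full union collapse into the closure of the single maximal piece.
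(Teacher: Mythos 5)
Your proposal is correct and follows essentially the same route as the paper's proof: the inclusion $\Fdt(G) \subseteq (\overline{\Fdt(G,G^{\max})})_{>0}$ via \Cref{thm:G_max_enough_F} and \Cref{lem:H1_subgraph_H2} with $H_2 = G^{\max}$, the reverse inclusion via $\Fdt(G,G^{\max}) \subseteq \Fdt(G)$ together with the relative closedness from \Cref{lem:polyhedral_cones}(a), and the identification of $\relint\Fdt(G)$ with $\Fdt(G,G^{\max})$ via \Cref{lem:polyhedral_cones}(b). Your explicit use of the convex-analysis fact $\relint(A)=\relint(\overline{A})$ merely spells out a step the paper leaves implicit.
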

\begin{proof}
The inclusion $\Fdt(G) \subseteq (\overline{\Fdt(G,G^{\max})})_{>0}$ follows from \Cref{thm:G_max_enough_F} and \Cref{lem:H1_subgraph_H2}, where the latter is applied with $H_2 = G^{\max}$. The converse inclusion $(\overline{\Fdt(G,G^{\max})})_{>0} \subseteq \Fdt(G)$ also follows, because $\Fdt(G,G^{\max}) \subseteq \Fdt(G)$ by the definitions, and $\Fdt(G)$ is relatively closed in $\rrpp^{|E|}$ by \Cref{lem:polyhedral_cones}(a). Finally, the equality $\relint\Fdt(G) = \Fdt(G,G^{\max})$ is a consequence of \Cref{lem:polyhedral_cones}(b) and $\Fdt(G) = (\overline{\Fdt(G,G^{\max})})_{>0}$.
\end{proof}

\begin{example}
We return to the partly reversible square from \Cref{ex:running_example}. For $G$ on the left of \eqref{tikz:running_flux}, a $\vv\beta \in \rrpp^6$ is an equilibrium flux if
\begin{align*}
\beta_1 {\small\begin{pmatrix*}[r]  1 \\  0 \end{pmatrix*}} +
\beta_2 {\small\begin{pmatrix*}[r]  0 \\  1 \end{pmatrix*}} +
\beta_3 {\small\begin{pmatrix*}[r] -1 \\  0 \end{pmatrix*}} +
\beta_4 {\small\begin{pmatrix*}[r]  0 \\ -1 \end{pmatrix*}} +
\beta_5 {\small\begin{pmatrix*}[r]  0 \\ -1 \end{pmatrix*}} +
\beta_6 {\small\begin{pmatrix*}[r]  0 \\  1 \end{pmatrix*}} =
        {\small\begin{pmatrix*}[r]  0 \\  0 \end{pmatrix*}}.
\end{align*}
Hence,
\begin{align*}
    \Feq(G) = \{\vv\beta \in \rrpp^6 \st \beta_1 = \beta_3 \text{ and }\beta_2 + \beta_6 = \beta_4 + \beta_5 \}.
\end{align*}

\begin{align}\label{tikz:running_flux}
\begin{aligned}
\begin{tikzpicture}[scale=2]

\tikzset{mybullet/.style={inner sep=1.5pt,outer sep=5pt,draw,fill,Egraphcolor,circle}};
\tikzset{myarrow/.style={arrows={-stealth},very thick,Egraphcolor}};

\draw [step=1, gray, very thin] (0,0) grid (1.25,1.25);
\draw [ -, black] (0,0)--(1.25,0);
\draw [ -, black] (0,0)--(0,1.25);

\node[mybullet]  (0) at (0,0) {};
\node[mybullet]  (X) at (1,0) {};
\node[mybullet] (XY) at (1,1) {};
\node[mybullet]  (Y) at (0,1) {};

\draw[myarrow]  (0) to node[below] {\fluxcolor{$\beta_1$}} (X);
\draw[myarrow,transform canvas={xshift=2pt}]   (X) to node[right] {\fluxcolor{$\beta_2$}} (XY);
\draw[myarrow,transform canvas={xshift=-2pt}] (XY) to node[left]  {\fluxcolor{$\beta_5$}} (X);
\draw[myarrow] (XY) to node[above] {\fluxcolor{$\beta_3$}} (Y);
\draw[myarrow,transform canvas={xshift=-2pt}]  (Y) to node[left]  {\fluxcolor{$\beta_4$}} (0);
\draw[myarrow,transform canvas={xshift=2pt}]   (0) to node[right] {\fluxcolor{$\beta_6$}} (Y);

\node at (0.5,1.5) {$(G,\fluxcolor{\vv\beta})$};
\node at (-0.5,0.5) {\phantom{0}};

\begin{scope}[shift={(2.5,0)}]

\draw [step=1, gray, very thin] (0,0) grid (1.25,1.25);
\draw [ -, black] (0,0)--(1.25,0);
\draw [ -, black] (0,0)--(0,1.25);

\node[mybullet]  (0) at (0,0) {};
\node[mybullet]  (X) at (1,0) {};
\node[mybullet] (XY) at (1,1) {};
\node[mybullet]  (Y) at (0,1) {};

\draw[myarrow]  (0) to node[below] {\fluxcolor{$\gamma_1$}} (X);
\draw[myarrow]  (X) to node[right] {\fluxcolor{$\gamma_2$}} (XY);
\draw[myarrow] (XY) to node[above] {\fluxcolor{$\gamma_3$}} (Y);
\draw[myarrow]  (Y) to node[left]  {\fluxcolor{$\gamma_4$}} (0);
\draw[myarrow,transform canvas={xshift=1.4pt,yshift=-1.4pt}] (XY) to node[xshift=7pt,yshift=-7pt]  {\fluxcolor{$\gamma_5$}} (0);
\draw[myarrow,transform canvas={xshift=-1.4pt,yshift=1.4pt}]  (0) to node[xshift=-7pt,yshift=7pt] {\fluxcolor{$\gamma_6$}} (XY);

\node at (0.5,1.5) {$(H,\fluxcolor{\vv\gamma})$};
    
\end{scope}

\begin{scope}[shift={(5,0)}]

\draw [step=1, gray, very thin] (0,0) grid (1.25,1.25);
\draw [ -, black] (0,0)--(1.25,0);
\draw [ -, black] (0,0)--(0,1.25);

\node[mybullet]  (0) at (0,0) {};
\node[mybullet]  (X) at (1,0) {};
\node[mybullet] (XY) at (1,1) {};
\node[mybullet]  (Y) at (0,1) {};

\draw[myarrow]  (0) to node {} (X);
\draw[myarrow,transform canvas={xshift=2pt}]   (X) to node {} (XY);
\draw[myarrow,transform canvas={xshift=-2pt}] (XY) to node {}  (X);
\draw[myarrow]  (XY) to node {} (Y);
\draw[myarrow,transform canvas={xshift=-2pt}]  (Y) to node {} (0);
\draw[myarrow,transform canvas={xshift=2pt}]   (0) to node {} (Y);
\draw[myarrow,transform canvas={xshift=1.4pt,yshift=-1.4pt}]  (0) to node {} (XY);
\draw[myarrow,transform canvas={xshift=-1.4pt,yshift=1.4pt}] (XY) to node {} (0);

\node at (0.5,1.5) {$G^{\max}$};
\node at (1.5,0.5) {\phantom{0}};

\end{scope}

\draw[rounded corners,lightgray] (current bounding box.south west) rectangle (current bounding box.north east);

\end{tikzpicture}
\end{aligned}
\end{align}
It is straightforward to check that a $\vv\beta \in \Feq(G)$ is a vertex-balanced flux if and only if $\beta_1 = \beta_4 - \beta_6$. Hence,
\begin{align*}
\Ft(G) = \Fdt(G,G) = \{\vv\beta \in \Feq(G) \st \beta_1 = \beta_4 - \beta_6 \}.
\end{align*}
(We remark that $\Ft(G) = \Fdt(G,G)$ holds for $G$ in \eqref{tikz:running_flux}. However, for an arbitrary E-graph $G$, only the inclusion $\Ft(G) \subseteq \Fdt(G,G)$ is guaranteed to hold.) By \Cref{lem:semialgebraic_F}, for the E-graph $H$ in \eqref{tikz:running_flux}, a $\vv\beta \in \rrpp^6$ is in $\Fdt(G,H)$ if and only if there exists a $\vv\gamma \in \rrpp^6$ such that
\begin{gather}
\beta_1 = \gamma_1 + \gamma_6, \qquad \beta_2 = \gamma_2, \qquad \beta_3 = \gamma_3 + \gamma_5, \qquad \beta_4 = \gamma_4, \qquad \beta_5 = \gamma_5, \qquad \beta_6 = \gamma_6; \tag{DE-f} \\
\gamma_4 +\gamma_5 = \gamma_1 + \gamma_6, \qquad \gamma_1 = \gamma_2, \qquad \gamma_2 + \gamma_6 = \gamma_3 + \gamma_5, \qquad \gamma_3 = \gamma_4. \tag{VB-f}
\end{gather}
A brief calculation shows that this linear problem has a solution if and only if $\vv\beta \in \Feq(G)$ fulfills $\beta_1 = \beta_4 + \beta_5$. In this case, $\vv\gamma$ is uniquely given by
\begin{align*}
\gamma_1 = \gamma_2 = \beta_2, \qquad \gamma_3 = \gamma_4 = \beta_4, \qquad \gamma_5 = \beta_5, \qquad \gamma_6 = \beta_6.
\end{align*}
Hence,
\begin{align*}
\Fdt(G,H) = \{\vv\beta \in \Feq(G) \st \beta_1 = \beta_4 + \beta_5 \}.
\end{align*}
Since $\Fdt(G, G)$ and $\Fdt(G, H)$ are the positive parts of two distinct three-dimensional subspaces of the four-dimensional subspace $\{\vv\beta \in \rr^6 \st \beta_1 = \beta_3 \text{ and } \beta_2 + \beta_6 = \beta_4 + \beta_5\}$, the convex hull of $\Fdt(G, G) \cup \Fdt(G, H)$ is four-dimensional. Consequently, the disguised toric flux cone $\Fdt(G)$ is four-dimensional. We provide more details in \Cref{subsec:square_parallelogram} below, here we only state that
\begin{align*}
\Fdt(G,G^{\max}) &= \{\vv\beta \in \Feq(G) \st |\beta_4 - \beta_6|< \beta_1 < \beta_4 + \beta_5 \}, \\
\Fdt(G)= (\overline{\Fdt(G,G^{\max})})_{>0} &= \{\vv\beta \in \Feq(G) \st |\beta_4 - \beta_6|\leq \beta_1 \leq \beta_4 + \beta_5 \}.\qedhere
\end{align*}
\end{example}
\section{The topology of the disguised toric locus}
\label{sec:topology}

In this section, we study some topological properties of the disguised toric locus by exhibiting a homeomorphism to a product space formed from the disguised flux cone. In \Cref{subsec:Kdt_G_H}, we focus on $\Kdt(G,H)$, and in \Cref{subsec:Kdt_G}, we apply an analogous analysis to $\Kdt(G)$.

\subsection{The disguised toric locus $\Kdt(G,H)$}
\label{subsec:Kdt_G_H}

For this section, we fix the E-graphs $G=(V,E)$ and $H=(V_H,E_H)$ with stoichiometric subspaces $\mc{S}$ and $\mc{S}_{H}$, respectively, and assume that $\Kdt(G,H)\neq\emptyset$. We also fix an $\xx_0 \in \mathbb{R}^n_{>0}$, and thereby the positive stoichiometric class $(\xx_0 + \mc{S}_{H})_{>0}$ of $H$. Our goal is to show that $\Kdt(G, H)$ is homeomorphic to the direct product of $(\xx_0+\mc{S}_H)_{>0}$ and $\Fdt(G, H)$. To this end, we define the map 
\begin{align*}
\begin{split}
\Psi_H\colon (\xx_0 + \mc{S}_{H})_{>0} \times \Fdt(G,H) &\to \Kdt(G,H), \\
\Psi_H(\vv x, \vv\beta) &= (\beta_{\vv y \to \vv y'} \xx^{-\yy})_{\yy \rightarrow \yy' \in E}.
\end{split}
\end{align*}
It is immediate from the definitions of the sets $\Fdt(G,H)$ and $\Kdt(G,H)$ that the rate constant vector $(\beta_{\vv y \to \vv y'} \xx^{-\yy})_{\yy \rightarrow \yy' \in E}$ is indeed in $\Kdt(G,H)$ for all $(\vv x, \vv\beta) \in (\xx_0 + \mc{S}_{H})_{>0} \times \Fdt(G,H)$. Further, whenever $\xx$ and $\vv\kappa$ are related via $\Psi_H(\xx,\vv\beta)= \vv\kk$ (for some $\vv\beta$), the point $\xx$ is the unique positive equilibrium of $(G,\vv\kk)$ in $(\xx_0 + \mc{S}_{H})_{>0}$. Since for every $\vv\kk \in \Kdt(G,H)$ there exists a unique positive equilibrium $\xx^*_{\vv\kk}$ of $(G,\vv\kk)$ in $(\xx_0 + \mc{S}_{H})_{>0}$ by \Cref{thm:nice_dynamics_disg}(a), we conclude that the map $\Psi_H$ is a bijection, and its inverse is given by
\begin{align*}
\begin{split}
\Psi_H^{-1}\colon \Kdt(G,H) &\to  (\xx_0 + \mc{S}_{H})_{>0} \times \Fdt(G,H), \\
\Psi_H^{-1}(\vv\kk) &= (\xx^*_{\vv\kk},(\kk_{\yy\to\yy'}(\xx^*_{\vv\kk})^{\yy})_{\yy \rightarrow \yy' \in E}).
\end{split}
\end{align*}
The following result tells us that the map $\Psi_H$ is, in fact, a homeomorphism. As a direct consequence of this, we obtain the main result of this section, \Cref{cor:Kdt_G_H} below, which states that the disguised toric locus $\Kdt(G, H)$ is a contractible (and hence, in particular, simply connected) manifold, whose dimension can be read off from $\Fdt(G,H)$.

\begin{theorem} \label{thm:homeo_Psi_H}
The map $\Psi_H$ is a homeomorphism between $(\xx_0 + \mc{S}_{H})_{>0} \times \Fdt(G,H)$ and $\Kdt(G,H)$.
\end{theorem}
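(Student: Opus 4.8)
The plan is to use the facts, already established above, that $\Psi_H$ is a bijection and that its inverse is given explicitly by $\Psi_H^{-1}(\vv\kk) = (\xx^*_{\vv\kk}, (\kk_{\yy\to\yy'}(\xx^*_{\vv\kk})^\yy)_{\yy\to\yy'\in E})$. It therefore suffices to check that $\Psi_H$ and $\Psi_H^{-1}$ are both continuous. Continuity of $\Psi_H$ is immediate, since each coordinate $\beta_{\yy\to\yy'}\xx^{-\yy}$ is a product of a coordinate of $\vv\beta$ with a monomial $\xx^{-\yy}$, which is continuous on the positive orthant. For $\Psi_H^{-1}$, the second component is a continuous function of the pair $(\vv\kk,\xx^*_{\vv\kk})$, so the entire problem reduces to showing that the \emph{equilibrium map} $\vv\kk\mapsto\xx^*_{\vv\kk}$ — sending $\vv\kk\in\Kdt(G,H)$ to the unique positive equilibrium of $(G,\vv\kk)$ in the fixed class $(\xx_0+\mc{S}_H)_{>0}$, which exists and is unique by \Cref{thm:nice_dynamics_disg}(a) — is continuous.

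To prove continuity of the equilibrium map, I would apply the implicit function theorem. Parametrize the class by $\xx=\xx_0+\vv s$ with $\vv s\in\mc{S}_H$, and consider the smooth map $\Phi(\vv\kk,\vv s)=\vv f_{(G,\vv\kk)}(\xx_0+\vv s)$ on the open set $\rrpp^{|E|}\times\{\vv s\in\mc{S}_H : \xx_0+\vv s\in\rrpp^n\}$. For a fixed $\vv\kk_0\in\Kdt(G,H)$ with equilibrium $\vv s^*=\xx^*_{\vv\kk_0}-\xx_0$, the crucial input is that the partial derivative $D_{\vv s}\Phi(\vv\kk_0,\vv s^*)$ equals the restricted Jacobian $J(\xx^*_{\vv\kk_0})|_{\mc{S}_H}$, which is \emph{invertible as a map} $\mc{S}_H\to\mc{S}_H$: since $(G,\vv\kk_0)$ is dynamically equal to a vertex-balanced pair on $H$, the linear stability in \Cref{thm:nice_dynamics_disg}(c) (see also the discussion after \Cref{thm:nice_dynamics}) guarantees that every eigenvalue of this restricted Jacobian has negative real part.

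The main obstacle I anticipate is the possible strict inclusion $\mc{S}_H\subsetneq\mc{S}$: a priori $\Phi$ takes values in the full stoichiometric subspace $\mc{S}$, so the system $\Phi=\vv 0$ looks overdetermined relative to the $\dim\mc{S}_H$ unknowns in $\vv s$, and the implicit function theorem cannot be applied to $\Phi$ directly. The remedy is to observe that for $\vv\kk\in\Kdt(G,H)$ the kinetic subspace of $(G,\vv\kk)$ equals $\mc{S}_H$, so $\vv f_{(G,\vv\kk)}$ takes values in $\mc{S}_H$ and, consequently, the image of the Jacobian $J(\xx^*_{\vv\kk})$ lies in $\mc{S}_H$ as well. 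Choosing a projection $P\colon\mc{S}\to\mc{S}_H$, one applies the implicit function theorem to the square system $P\circ\Phi=\vv 0$, whose $\vv s$-derivative is again $J(\xx^*_{\vv\kk_0})|_{\mc{S}_H}$ and hence invertible. This yields an open neighborhood $U\subseteq\rrpp^{|E|}$ of $\vv\kk_0$ and a continuous branch $\vv\kk\mapsto\tilde{\vv s}(\vv\kk)$ with $P\circ\Phi(\vv\kk,\tilde{\vv s}(\vv\kk))=\vv 0$ and $\tilde{\vv s}(\vv\kk_0)=\vv s^*$.

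Finally, I would glue the local branch to the global equilibrium map. For $\vv\kk\in U\cap\Kdt(G,H)$ the value $\vv f_{(G,\vv\kk)}(\xx_0+\tilde{\vv s}(\vv\kk))$ already lies in $\mc{S}_H$, so $P\circ\Phi=\vv 0$ forces $\Phi=\vv 0$; hence $\xx_0+\tilde{\vv s}(\vv\kk)$ is a genuine positive equilibrium of $(G,\vv\kk)$ in $(\xx_0+\mc{S}_H)_{>0}$, and by the uniqueness in \Cref{thm:nice_dynamics_disg}(a) it must coincide with $\xx^*_{\vv\kk}$. Thus the equilibrium map agrees locally with the continuous branch $\tilde{\vv s}$ and is therefore continuous at each $\vv\kk_0\in\Kdt(G,H)$. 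Continuity of $\Psi_H^{-1}$, and with it the homeomorphism statement, follows.
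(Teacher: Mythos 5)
Your proposal is correct and follows essentially the same route as the paper's proof: both reduce the problem to the continuity of the equilibrium map $\vv\kk\mapsto\xx^*_{\vv\kk}$ and establish it via the implicit function theorem, with the nondegeneracy of $\xx^*_{\vv\kk}$ within $(\xx_0+\mc{S}_H)_{>0}$ supplied by the linear stability in \Cref{thm:nice_dynamics_disg}(c). The only difference is one of detail: the paper states this in a single sentence, whereas you carefully spell out the application of the implicit function theorem (the restriction to $\mc{S}_H$, the projection trick handling $\mc{S}_H\subsetneq\mc{S}$, and the gluing via uniqueness), all of which is sound.
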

\begin{proof}
Clearly, $\Psi_H$ is continuous. Further, as a consequence of the implicit function theorem, the map $\phi_H\colon \Kdt(G,H) \to (\xx_0 + \mc{S}_H)_{>0}$, defined by $\phi(\vv\kappa) = \xx^*_{\vv\kappa}$, is continuous, because $\xx^*_{\vv\kappa}$ is a nondegenerate equilibrium within $(\xx_0+\mc{S}_H)_{>0}$ by \Cref{thm:nice_dynamics_disg}(c). Hence, the continuity of the inverse map $\Psi_H^{-1}$ also follows.
\end{proof}

\begin{corollary}
\label{cor:Kdt_G_H}
Assume that $\Kdt(G,H)\neq\emptyset$. Then the following statements hold.
\begin{enumerate}[label={\rm(\alph*)}]
\item The disguised toric locus $\Kdt(G,H)$ is a manifold with no boundary.
\item The disguised toric locus $\Kdt(G,H)$ is contractible.
\item We have $\dim \Kdt(G,H) = \dim \mc{S}_H + \dim \Fdt(G,H)$.
\end{enumerate}
\end{corollary}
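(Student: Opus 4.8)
The plan is to derive all three statements as direct consequences of the homeomorphism $\Psi_H$ established in \Cref{thm:homeo_Psi_H}, together with the polyhedral structure of $\Fdt(G,H)$ from \Cref{lem:polyhedral_cones}(b). The key observation is that homeomorphisms preserve all the topological properties in question, so it suffices to understand the domain $(\xx_0 + \mc{S}_H)_{>0} \times \Fdt(G,H)$.

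For part (a), I would argue that the domain is a manifold without boundary, and then transport this across the homeomorphism. The first factor $(\xx_0 + \mc{S}_H)_{>0}$ is an open subset of the affine space $\xx_0 + \mc{S}_H$, hence a manifold without boundary of dimension $\dim \mc{S}_H$. The second factor $\Fdt(G,H)$ equals $\relint(\overline{\Fdt(G,H)})$ by \Cref{lem:polyhedral_cones}(b); the relative interior of a polyhedral cone is an open subset of its affine hull, hence also a manifold without boundary. A product of two boundaryless manifolds is a boundaryless manifold, and since $\Kdt(G,H)$ is homeomorphic to this product via \Cref{thm:homeo_Psi_H}, it inherits the same property.

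For part (b), I would show that each factor of the domain is contractible, so the product is contractible, and then invoke the homeomorphism once more. The first factor $(\xx_0 + \mc{S}_H)_{>0}$ is convex (the intersection of the affine subspace $\xx_0 + \mc{S}_H$ with the convex set $\rrpp^n$), hence contractible. The second factor $\Fdt(G,H)$ is convex by \Cref{lem:polyhedral_cones}(b), hence contractible as well. A finite product of contractible spaces is contractible, and contractibility is a topological invariant, so $\Kdt(G,H)$ is contractible. (Simple connectedness then follows automatically, as noted before the statement.)

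For part (c), I would use that dimension is preserved under homeomorphism, so $\dim \Kdt(G,H)$ equals the dimension of the product. Since dimension is additive over direct products, this is $\dim\bigl((\xx_0 + \mc{S}_H)_{>0}\bigr) + \dim \Fdt(G,H) = \dim \mc{S}_H + \dim \Fdt(G,H)$, using that $(\xx_0 + \mc{S}_H)_{>0}$ is a full-dimensional open subset of $\xx_0 + \mc{S}_H$. I expect the only genuine subtlety to be bookkeeping about which notion of dimension is in play: the paper works with the semialgebraic dimension, which agrees with the manifold dimension at nonsingular points, so I would briefly note that both factors are semialgebraic and that the relevant additivity and invariance statements hold in this setting. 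This is the step most in need of care, but it is a matter of citing the standard properties of semialgebraic dimension rather than a real obstacle.
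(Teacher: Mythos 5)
Your proposal is correct and follows essentially the same route as the paper: the paper's proof also invokes \Cref{lem:polyhedral_cones}(b) to note that $\Fdt(G,H)$ is an open polyhedral cone (hence a contractible manifold without boundary) and then declares all three parts immediate via the homeomorphism $\Psi_H$ of \Cref{thm:homeo_Psi_H}. Your write-up simply makes explicit the product-of-manifolds, product-of-contractible-spaces, and dimension-additivity steps that the paper leaves implicit.
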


\begin{proof}
By \Cref{lem:polyhedral_cones}(b), the set $\Fdt(G,H)$ is an open polyhedral cone, and hence, it is a contractible manifold without boundary. Parts (a), (b), and (c) are now immediate.
\end{proof}

\subsection{The disguised toric locus $\Kdt(G)$}
\label{subsec:Kdt_G}

For this section, we fix an E-graph $G=(V,E)$ with stoichiometric subspace $\mc{S}$, and assume that $\Kdt(G)\neq\emptyset$. Suppose further that
\begin{align}\label{eq:kinetic_equals_stoichiometric}
\text{the kinetic subspace of $(G,\vv\kappa)$ equals to $\mc{S}$ for all $\vv{\kk} \in \mathbb{R}_{>0}^{|E|}$.}    
\end{align}
In general, condition \eqref{eq:kinetic_equals_stoichiometric} is hard to verify, but it holds, for example, for all weakly reversible E-graphs by \Cref{thm:FH1977}.
As a consequence of \eqref{eq:kinetic_equals_stoichiometric}, whenever $\Kdt(G,H) \neq \emptyset$ for an E-graph $H$, we have $\mc{S} = \mc{S}_H$, where $\mc{S}_{H}$ denotes the stoichiometric subspace of the E-graph $H$. We also fix an $\xx_0 \in \mathbb{R}^n_{>0}$, and thereby we fix the positive stoichiometric class $(\xx_0 + \mc{S})_{>0}$ of $G$ (which equals to the positive stoichiometric class $(\xx_0 + \mc{S}_H)_{>0}$ of $H$ for any $H$ with $\Kdt(G,H) \neq \emptyset$). Our goal is to show that $\Kdt(G)$ is homeomorphic to the direct product of $(\xx_0+\mc{S})_{>0}$ and $\Fdt(G)$. To this end, we define the map 
\begin{align*}
\begin{split}
\Psi\colon (\xx_0 + \mc{S})_{>0} \times \Fdt(G) &\to \Kdt(G), \\
\Psi(\vv x, \vv\beta) &= (\beta_{\vv y \to \vv y'} \xx^{-\yy})_{\yy \rightarrow \yy' \in E}.
\end{split}
\end{align*}
It is immediate from the definitions of the sets $\Fdt(G)$ and $\Kdt(G)$ that the rate constant vector $(\beta_{\vv y \to \vv y'} \xx^{-\yy})_{\yy \rightarrow \yy' \in E}$ is indeed in $\Kdt(G)$ for all $(\vv x, \vv\beta) \in (\xx_0 + \mc{S})_{>0} \times \Fdt(G)$. Further, whenever $\xx$ and $\vv\kappa$ are related via $\Psi(\xx,\vv\beta)= \vv\kk$ (for some $\vv\beta$), the point $\xx$ is the unique positive equilibrium of $(G,\vv\kk)$ in $(\xx_0 + \mc{S})_{>0}$. Since for every $\vv\kk \in \Kdt(G)$ there exists a unique positive equilibrium $\xx^*_{\vv\kk}$ of $(G,\vv\kk)$ in $(\xx_0 + \mc{S})_{>0}$ by \Cref{thm:nice_dynamics_disg}(a), we obtain that $\Psi$ is a bijection, and its inverse is given by
\begin{align*}
\begin{split}
\Psi^{-1}\colon \Kdt(G) &\to  (\xx_0 + \mc{S})_{>0} \times \Fdt(G), \\
\Psi^{-1}(\vv\kk) &= (\xx^*_{\vv\kk},(\kk_{\yy\to\yy'}(\xx^*_{\vv\kk})^{\yy})_{\yy \rightarrow \yy' \in E}).
\end{split}
\end{align*}
The following result tells us that the map $\Psi$ is, in fact, a homeomorphism. As a direct consequence of this, we obtain the main result of this section, \Cref{cor:Kdt_G} below, which states that the disguised toric locus $\Kdt(G)$ is a contractible manifold with boundary, whose topology in many ways is reflected by the topology of $\Fdt(G)$. 
We note that part (b) of \Cref{cor:Kdt_G} is a sharpening of the already known fact that $\Kdt(G)$ is connected \cite{craciun:deshpande:jin:2024b}, since every contractible manifold is simply connected.

\begin{theorem}\label{thm:homeo_Psi}
For any E-graph $G$ that satisfies \eqref{eq:kinetic_equals_stoichiometric}, the map $\Psi$ is a homeomorphism between $(\xx_0 + \mc{S})_{>0} \times \Fdt(G)$ and $\Kdt(G)$.
\end{theorem}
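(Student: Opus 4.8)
The plan is to prove that $\Psi$ is a homeomorphism by reducing to the already-established statement for a single graph (\Cref{thm:homeo_Psi_H}) together with the decomposition of $\Fdt(G)$ in terms of the maximal graph (\Cref{thm:FdtG_is_clFdtGGmax}). The bijectivity of $\Psi$ is already argued in the text preceding the statement, using the uniqueness of the positive equilibrium in each positive stoichiometric class from \Cref{thm:nice_dynamics_disg}(a), so what remains is to establish that both $\Psi$ and $\Psi^{-1}$ are continuous. Continuity of $\Psi$ itself is immediate, since $\Psi(\vv x,\vv\beta)=(\beta_{\yy\to\yy'}\xx^{-\yy})_{\yy\to\yy'\in E}$ is a composition of continuous maps on $\rrpp^n\times\Fdt(G)$. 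Hence the entire burden of the proof is to show that $\Psi^{-1}$ is continuous.

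First I would observe that the two coordinates of $\Psi^{-1}(\vv\kk)=(\xx^*_{\vv\kk},(\kk_{\yy\to\yy'}(\xx^*_{\vv\kk})^{\yy})_{\yy\to\yy'\in E})$ can be handled separately: once the assignment $\vv\kk\mapsto\xx^*_{\vv\kk}$ is shown to be continuous, the flux coordinate is a continuous function of $(\vv\kk,\xx^*_{\vv\kk})$ and so continuity of the whole inverse follows. Therefore the crux is the continuity of the map $\phi\colon \Kdt(G)\to(\xx_0+\mc{S})_{>0}$, $\phi(\vv\kk)=\xx^*_{\vv\kk}$, sending each rate constant vector to the unique positive equilibrium of $(G,\vv\kk)$ lying in the prescribed positive stoichiometric class. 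This is exactly the argument that appears in the proof of \Cref{thm:homeo_Psi_H}: by \Cref{thm:nice_dynamics_disg}(c), the equilibrium $\xx^*_{\vv\kk}$ is linearly (hence nondegenerate) stable relative to $(\xx_0+\mc{S})_{>0}$, so the Jacobian of $\vv f_{(G,\vv\kk)}$ restricted to $\mc{S}$ is invertible at $\xx^*_{\vv\kk}$, and the implicit function theorem yields a local continuous (indeed smooth) solution branch. Since $\xx^*_{\vv\kk}$ is the unique equilibrium in its stoichiometric class, these local branches glue to a globally well-defined continuous map $\phi$.

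The one genuinely new point, compared to the single-graph case, is that here the condition \eqref{eq:kinetic_equals_stoichiometric} must be invoked to guarantee that $\mc{S}_H=\mc{S}$ for every $H$ with $\Kdt(G,H)\neq\emptyset$; this is what makes the \emph{fixed} stoichiometric class $(\xx_0+\mc{S})_{>0}$ the correct common domain across all the pieces $\Kdt(G,H)$, so that the equilibrium $\xx^*_{\vv\kk}$ is unambiguously defined for every $\vv\kk\in\Kdt(G)$ regardless of which graph $H$ realizes it. I expect the main subtlety to be verifying that \Cref{thm:nice_dynamics_disg}(a) and (c) indeed apply uniformly to all $\vv\kk\in\Kdt(G)$: one must check that the nondegeneracy and uniqueness of the equilibrium hold for the graph $G$ itself (not merely for the realizing graph $H$), which is where \eqref{eq:kinetic_equals_stoichiometric} does the essential work by forcing the relevant stoichiometric subspaces to coincide. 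Once this uniformity is in place, the implicit-function-theorem argument is purely local and therefore insensitive to which piece $\Kdt(G,H)$ a given $\vv\kk$ belongs to, and the proof concludes exactly as in \Cref{thm:homeo_Psi_H}.
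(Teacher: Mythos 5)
Your proposal is correct and takes essentially the same approach as the paper: the paper's proof likewise notes that $\Psi$ is clearly continuous, relies on the bijectivity established in the discussion preceding the statement, and reduces continuity of $\Psi^{-1}$ to continuity of $\vv\kk \mapsto \xx^*_{\vv\kk}$ via the implicit function theorem, using the nondegeneracy (linear stability) of the equilibrium from \Cref{thm:nice_dynamics_disg}(c). The role you assign to condition \eqref{eq:kinetic_equals_stoichiometric} --- forcing $\mc{S}_H = \mc{S}$ so that the fixed positive stoichiometric class is the common domain across all pieces $\Kdt(G,H)$ --- is exactly how the paper handles this point in the setup preceding the theorem.
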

\begin{proof}
Clearly, $\Psi$ is continuous. Further, as a consequence of the implicit function theorem, the map $\phi\colon \Kdt(G) \to (\xx_0 + \mc{S})_{>0}$, defined by $\phi(\vv\kappa) = \xx^*_{\vv\kappa}$, is continuous, because $\xx^*_{\vv\kappa}$ is a nondegenerate equilibrium within $(\xx_0+\mc{S})_{>0}$ by \Cref{thm:nice_dynamics_disg}(c). Hence, the continuity of the inverse map $\Psi^{-1}$ also follows.
\end{proof}

\begin{corollary}
\label{cor:Kdt_G}
Assume that $\Kdt(G)\neq \emptyset$ and \eqref{eq:kinetic_equals_stoichiometric} is satisfied. Then the following statements hold.
\begin{enumerate}[label={\rm(\alph*)}]
\item The disguised toric locus $\Kdt(G)$ is a manifold with boundary.
\item The disguised toric locus $\Kdt(G)$ is contractible.
\item We have $\dim \Kdt(G) = \dim \mc{S} + \dim \Fdt(G)$.
\item The manifold interior of $\Kdt(G)$ is $\Kdt(G,G^{\max})$. 
\item We have $\dim\Kdt(G)=\dim\Kdt(G,G^{\max})$.
\end{enumerate}
\end{corollary}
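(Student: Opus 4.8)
The plan is to deduce every part from the homeomorphism $\Psi$ of \Cref{thm:homeo_Psi}, which identifies $\Kdt(G)$ with the product $(\xx_0 + \mc{S})_{>0} \times \Fdt(G)$. The first factor is a nonempty relatively open convex subset of the coset $\xx_0 + \mc{S}$, so it is a contractible boundaryless manifold of dimension $\dim\mc{S}$. Thus the whole corollary reduces to understanding the second factor $\Fdt(G)$ as a topological space, followed by routine bookkeeping of how products of manifolds behave.

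The key intermediate claim, and the one place that requires genuine care, is that $\Fdt(G)$ is a contractible manifold with boundary whose manifold interior is $\Fdt(G,G^{\max})$. Contractibility is immediate, since $\Fdt(G)$ is convex by \Cref{lem:polyhedral_cones}(a) and hence star-shaped. For the manifold-with-boundary structure I would use that, by \Cref{lem:polyhedral_cones}(a), the closure $C := \overline{\Fdt(G)}$ is a pointed polyhedral cone and $\Fdt(G) = C \cap \rrpp^{|E|}$. A pointed polyhedral cone is a topological manifold with boundary: a cross-section transverse to all of its extreme rays is a polytope, i.e.\ a ball, and the cone over a ball is again a ball, so neither the corners nor the apex create a topological obstruction; its manifold interior is $\relint C$. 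Since $\rrpp^{|E|}$ is open, $\Fdt(G) = C \cap \rrpp^{|E|}$ is an open subset of $C$, hence again a manifold with boundary, with manifold interior $\relint C \cap \rrpp^{|E|}$. Because a convex set and its closure share the same relative interior, $\relint C = \relint\Fdt(G) = \Fdt(G,G^{\max})$ by \Cref{thm:FdtG_is_clFdtGGmax}, and this set already lies in $\rrpp^{|E|}$; so the manifold interior of $\Fdt(G)$ is exactly $\Fdt(G,G^{\max})$.

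With this claim in hand, the five parts follow quickly through $\Psi$. For (a), the product of the boundaryless manifold $(\xx_0 + \mc{S})_{>0}$ with the manifold-with-boundary $\Fdt(G)$ is a manifold with boundary, and $\Psi$ transports this structure to $\Kdt(G)$. Part (b) is a product of contractible spaces. For (c), $\dim\Kdt(G) = \dim(\xx_0+\mc{S})_{>0} + \dim\Fdt(G) = \dim\mc{S} + \dim\Fdt(G)$. For (d), since the first factor is boundaryless, the manifold interior of the product is $(\xx_0+\mc{S})_{>0} \times \Fdt(G,G^{\max})$; on this subset the defining formula for $\Psi$ coincides with that of the map $\Psi_{G^{\max}}$ from \Cref{subsec:Kdt_G_H} (note $\mc{S}_{G^{\max}} = \mc{S}$ under \eqref{eq:kinetic_equals_stoichiometric}, and $\Kdt(G,G^{\max}) \neq \emptyset$ by \Cref{lem:Kdt_G_H_nonempty}), and $\Psi_{G^{\max}}$ is a bijection onto $\Kdt(G,G^{\max})$ by \Cref{thm:homeo_Psi_H}; hence the manifold interior of $\Kdt(G)$ is $\Kdt(G,G^{\max})$. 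For (e), combine (c) with $\dim\Fdt(G) = \dim\Fdt(G,G^{\max})$ from \Cref{thm:FdtG_is_clFdtGGmax} and with \Cref{cor:Kdt_G_H}(c) applied to $H = G^{\max}$.

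The main obstacle is the intermediate claim, specifically verifying the manifold-with-boundary structure of $\Fdt(G)$ and pinning down its interior: one must invoke the convex-geometric fact that a pointed polyhedral cone is a topological manifold with boundary (despite its corners and apex), and then check that intersecting with the open orthant $\rrpp^{|E|}$ both preserves this structure and excises precisely the non-positive part of the boundary, leaving $\Fdt(G,G^{\max})$ as the interior. Everything else is formal manipulation of products and of the homeomorphism $\Psi$, and in particular the identification in (d) that the restriction of $\Psi$ to the top stratum is literally $\Psi_{G^{\max}}$.
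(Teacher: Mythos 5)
Your proposal is correct and follows essentially the same route as the paper: establish via \Cref{lem:polyhedral_cones}(a) that $\Fdt(G)$ is a contractible manifold with boundary whose manifold interior is $\relint\Fdt(G)=\Fdt(G,G^{\max})$ (\Cref{thm:FdtG_is_clFdtGGmax}), then transport everything through the homeomorphism $\Psi$ of \Cref{thm:homeo_Psi}, using \Cref{thm:homeo_Psi_H} with $H=G^{\max}$ for part (d). The only differences are expository---you spell out the pointed-cone-is-a-ball argument and the identification of $\Psi$ restricted to the top stratum with $\Psi_{G^{\max}}$, which the paper treats as immediate, and you derive (e) from (c) plus \Cref{cor:Kdt_G_H}(c) rather than directly from (d)---but these are inessential variations.
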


\begin{proof}
By \Cref{lem:polyhedral_cones}(a), the set $\Fdt(G)$ is obtained by removing some faces of the pointed polyhedral cone $\overline{\Fdt(G)}$, and hence, it is a contractible manifold with boundary. Parts (a), (b), and (c) are now immediate, and part (d) follows from \Cref{thm:FdtG_is_clFdtGGmax} together with \Cref{thm:homeo_Psi_H} applied to $H=G^{\max}$. Part (e) follows immediately from part (d).
\end{proof}

We conclude this section by providing an example that violates condition \eqref{eq:kinetic_equals_stoichiometric} and for which $\dim \Kdt(G) \neq \dim \mc{S} + \dim \Fdt(G)$. Consider the one-dimensional E-graph $G$ given by $\sf{0} \leftarrow  \sf{X} \to 2\sf{X}$. The dimension of the stoichiometric subspace is one, but the dimension of the kinetic subspace is zero (when $\kk_{\sf{X} \to \sf{0}} = \kk_{\sf{X} \to 2\sf{X}}$) or one (when $\kk_{\sf{X} \to \sf{0}} \neq \kk_{\sf{X} \to 2\sf{X}}$). Further, both $\Kdt(G)$ and $\Fdt(G)$ are one-dimensional.

\section{A flux-based procedure for computing the disguised toric locus}
\label{sec:algorithm}

Computationally, we can obtain a semialgebraic description of $\Kdt(G)$ through the process of \emph{quantifier elimination}, which is implemented in, e.g.,  \texttt{Mathematica} or \texttt{Redlog}. However, general-purpose algorithms for quantifier elimination tend to be very computationally expensive, especially for nonlinear problems like this one.
The authors of \cite{brustenga:craciun:sorea:2022} make use of the matrix--tree theorem to reduce the complexity of the quantifier elimination problem, but the resulting algorithm is still only feasible for very small networks.

Instead, we propose a new procedure for computing $\Kdt(G)$ using fluxes and the theory developed in \Cref{sec:G_max,sec:disg_flux,sec:topology}. The advantage is that we can now separate the problem into a linear part and a nonlinear part, which often proves to be computationally cheaper overall. 
We take the following three-step approach for an E-graph $G=(V,E)$.

\begin{enumerate}
    \item[1)] \textbf{\boldmath Calculate $G^{\max}$.}\\
    We iterate on the edges. For an edge $\yy \to \yy'$ of $G^{\comp}$, we check if there exists a weakly reversible E-graph $H=(V_H,E_H)\subseteq G^{\comp}$ such that $\yy \to \yy'\in E_H$ and for which $G$ admits a realization with respect to $H$. This is a linear feasibility problem, as it is enough to show that there exist $\vv\kk\in \rrpp^{|E|}$ and $\vv\lambda\in\rrpp^{|E_H|}$ that satisfy \eqref{eq:DE}, and check that $H$ is a weakly reversible graph; both are linear conditions.

    \item[2)] \textbf{\boldmath Calculate $\Fdt (G, G^{\max})$.}\\
    We apply quantifier elimination to \eqref{eq:DE-f} and \eqref{eq:VB-f} in \Cref{lem:semialgebraic_F} with $H=G^{\max}$. This is a \emph{linear} problem. Taking the positive part of the closure of $\Fdt (G, G^{\max})$, we get $\Fdt(G)$ by \Cref{thm:FdtG_is_clFdtGGmax}.

    \item[3)] \textbf{\boldmath Calculate $\Kdt (G)$ from $\Fdt (G)$.}\\
    We calculate $\Kdt(G)$ from $\Fdt(G)$ by solving a \emph{nonlinear} quantifier elimination problem. In particular, we want to find for which $\vv\kk \in \rrpp^{|E|}$ does there exist an $\xx \in \rrpp^n$ such that $\vv\beta = (\kk_{\yy\to\yy'}\xx^{\yy})_{\yy\to\yy'\in E}$ is in $\Fdt(G)$.
\end{enumerate}

The first step is a linear feasibility problem. The second step is a linear quantifier elimination problem, and the third step is a nonlinear quantifier elimination problem. Even when the last step is too computationally expensive, the intermediate object $\Fdt(G)$ can be very useful, for instance, for computing the dimension of $\Kdt(G)$ using \Cref{cor:Kdt_G}, or for {\em sampling} this space as shown in \Cref{subsec:4d}.

We use the above-sketched three-step approach to calculate the disguised toric loci of several examples in \Cref{sec:examples}. Our computations are performed in \texttt{Mathematica}, and the codes can be found in the GitHub repository \cite{boros_github}.

By \Cref{thm:G_max_enough_F}, when computing the disguised toric flux cone $\Fdt(G)$, it suffices to investigate the sets $\Fdt(G,H)$ for $H \subseteq G^{\max}$. In fact, we have seen in \Cref{thm:FdtG_is_clFdtGGmax} that $\Fdt(G)$ equals the positive part of the closure of $\Fdt(G,G^{\max})$. The consequence of the following lemma is that sometimes we can omit some edges of $G^{\max}$ to obtain a graph $H'$ for which $\Fdt(G,H')=\Fdt(G,G^{\max})$. The advantage is that we have to deal with a smaller-dimensional linear problem. We will make use of this in \Cref{subsec:LVA_rev,subsec:B-T_13,subsec:4d}.

\begin{lemma} 
\label{lem:rank-1}
Suppose an E-graph $H$ has three distinct vertices $\yy_1$, $\yy_2$, $\yy_3$ on a line (in this order, see \Cref{fig:H_and_H'}), and $\yy_1 \to \yy_3 \in H$. Let $H'$ be the E-graph that is obtained from $H$ by deleting $\yy_1 \to \yy_3$, and adding the three reactions $\yy_1 \rightleftarrows \yy_2 \to \yy_3$ (if they were not already in $H$).  Then, for any $\vv\gamma \in \Ft(H)$ there exists a $\vv\gamma' \in \Ft(H')$ such that $(H,\vv\gamma) \triangleq (H',\vv\gamma')$.
\end{lemma}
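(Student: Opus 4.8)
The plan is to construct $\vv\gamma'$ explicitly by rerouting the flux carried by the edge $\yy_1\to\yy_3$ through the intermediate vertex $\yy_2$, together with a compensating reverse flux on $\yy_2\to\yy_1$. First I would record the geometric input from collinearity: since $\yy_1,\yy_2,\yy_3$ lie on a line in this order, there is a unique $t\in(0,1)$ with $\yy_2=(1-t)\yy_1+t\yy_3$, so that
\begin{align*}
\yy_2-\yy_1 = t(\yy_3-\yy_1) \qquad\text{and}\qquad \yy_3-\yy_2 = (1-t)(\yy_3-\yy_1).
\end{align*}
The fact that $t\in(0,1)$ (rather than $t\le 0$ or $t\ge 1$) is precisely the feature of the ordering that I will exploit to keep every new flux positive.

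Next I would define the candidate $\vv\gamma'$. Writing $\delta=\gamma_{\yy_1\to\yy_3}>0$, let $\vv\gamma'$ coincide with $\vv\gamma$ on every edge of $H$ other than $\yy_1\to\yy_3$, drop the edge $\yy_1\to\yy_3$ entirely, and then \emph{add} the increments
\begin{align*}
a=\tfrac{\delta}{t}\ \text{ on } \yy_1\to\yy_2, \qquad b=\tfrac{(1-t)\delta}{t}\ \text{ on } \yy_2\to\yy_1, \qquad c=\delta\ \text{ on } \yy_2\to\yy_3,
\end{align*}
each on top of the value $\vv\gamma$ already assigns to that edge, should it happen to lie in $H$. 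Since $\delta>0$ and $t\in(0,1)$, all three increments are positive, so $\vv\gamma'$ is strictly positive on every edge of $H'$; this is where the ordering hypothesis is used essentially.

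Then I would verify the two required properties. For dynamical equality $(H,\vv\gamma)\triangleq(H',\vv\gamma')$, only the source vertices $\yy_1$ and $\yy_2$ are affected, and the net outgoing flux vector is unchanged at each: at $\yy_1$ the lost term $\delta(\yy_3-\yy_1)$ is exactly matched by $a(\yy_2-\yy_1)=\tfrac{\delta}{t}\,t(\yy_3-\yy_1)$, while at $\yy_2$ the two added terms cancel, $b(\yy_1-\yy_2)+c(\yy_3-\yy_2)=\big(-(1-t)\delta+(1-t)\delta\big)(\yy_3-\yy_1)=\vv{0}$. For vertex-balance $\vv\gamma'\in\Ft(H')$, since $\vv\gamma$ is already balanced in $H$ it suffices to check that the change in (incoming minus outgoing) flux vanishes at $\yy_1,\yy_2,\yy_3$ and nowhere else: deleting $\yy_1\to\yy_3$ and inserting $a,b,c$ changes this quantity at $\yy_1$ by $\delta-a+b$, at $\yy_2$ by $a-b-c$, and at $\yy_3$ by $-\delta+c$, all of which vanish for the chosen values.

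The only real content is that the constraints from the two conditions are \emph{simultaneously} solvable with positive fluxes. Vertex-balance at $\yy_3$ forces $c=\delta$ and at $\yy_1$ forces $b=a-\delta$, while dynamical equality at $\yy_1$ forces $a=\delta/t$; one then checks these values also satisfy balance at $\yy_2$ and dynamical equality at $\yy_2$ automatically. I expect the main (modest) obstacle to be explaining why the reverse reaction $\yy_2\to\yy_1$ is needed at all: a bare path $\yy_1\to\yy_2\to\yy_3$ would create an unbalanced outflow at the new source $\yy_2$ that nothing removes, breaking dynamical equality there, whereas the reverse edge supplies exactly the extra degree of freedom that lets the outflow at $\yy_2$ cancel — and its flux $b$ is positive precisely because $t<1$.
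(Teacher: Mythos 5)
Your proposal is correct and is essentially the paper's own proof: writing $\yy_2=(1-t)\yy_1+t\yy_3$ with $t=\tfrac{a}{a+b}$ (where $a,b$ are the lengths of $\yy_2-\yy_1$ and $\yy_3-\yy_2$), your increments $\tfrac{\delta}{t}$, $\tfrac{(1-t)\delta}{t}$, $\delta$ are exactly the paper's $\tfrac{a+b}{a}\gamma_{\yy_1\to\yy_3}$, $\tfrac{b}{a}\gamma_{\yy_1\to\yy_3}$, $\gamma_{\yy_1\to\yy_3}$ added on $\yy_1\to\yy_2$, $\yy_2\to\yy_1$, $\yy_2\to\yy_3$. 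Your verification of dynamical equality and vertex balance (which the paper leaves as ``straightforward to check'') is also correct, including the observation that the ordering hypothesis is what makes all new fluxes positive.
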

\begin{proof}
Let $a>0$ and $b>0$ denote the lengths of the vectors $\yy_2 - \yy_1$ and $\yy_3 - \yy_2$, respectively. With $E_{H'}$ denoting the edge set of $H'$, let us define $\vv\gamma'\in\rrpp^{|E_{H'}|}$ by
\begin{align*}
    \gamma'_{\yy_1 \to \yy_2} &= \gamma_{\yy_1 \to \yy_2} + \tfrac{a+b}{a}\gamma_{\yy_1 \to \yy_3}, \\
    \gamma'_{\yy_2 \to \yy_1} &= \gamma_{\yy_2 \to \yy_1} + \tfrac{b}{a}\gamma_{\yy_1 \to \yy_3}, \\
    \gamma'_{\yy_2 \to \yy_3} &= \gamma_{\yy_2 \to \yy_3} + \gamma_{\yy_1 \to \yy_3},
\end{align*}
and $\vv\gamma'$ agrees with $\vv\gamma$ on every edge different from these three. It is straightforward to check that $\vv\gamma'$ is indeed in $\Ft(H)$ and that $(H,\vv\gamma) \triangleq (H',\vv\gamma')$.
\end{proof}

\begin{figure}
    \centering
    \begin{tikzpicture}[scale=1]

\tikzset{mybullet/.style={inner sep=1.2pt,outer sep=2pt,draw,fill,Egraphcolor,circle}};
\tikzset{myarrow/.style={arrows={-stealth},very thick,Egraphcolor}};

\node[mybullet]  (0) at (0,0) {};
\node[mybullet]  (3) at (3,0) {};
\node[mybullet]  (5) at (5,0) {};

\draw [decorate,decoration={brace,amplitude=10pt,mirror}] (0.south east) -- (3.south west) node[midway,below=10pt] {$a$};
\draw [decorate,decoration={brace,amplitude=10pt,mirror}] (3.south east) -- (5.south west) node[midway,below=10pt] {$b$};

\node[yshift=-15pt] at (0) {$\yy_1$}; 
\node[yshift=-15pt] at (3) {$\yy_2$}; 
\node[yshift=-15pt] at (5) {$\yy_3$}; 

\draw[myarrow,transform canvas={yshift=4pt}]    (0) to node[above] {\fluxcolor{$\gamma_{\yy_1 \to \yy_3}$}} (5);
\node at (2.5,1.25) {$H$};

\node at (-0.75,0) {\phantom{0}};

\begin{scope}[shift={(7,0)}]
    
\node[mybullet]  (0) at (0,0) {};
\node[mybullet]  (3) at (3,0) {};
\node[mybullet]  (5) at (5,0) {};

\node[yshift=-15pt] at (0) {$\yy_1$}; 
\node[yshift=-15pt] at (3) {$\yy_2$}; 
\node[yshift=-15pt] at (5) {$\yy_3$}; 

\draw[myarrow,transform canvas={yshift=3pt}]  (0) to node[above] {\fluxcolor{$\tfrac{a+b}{a}\gamma_{\yy_1 \to \yy_3}$}} (3);
\draw[myarrow,transform canvas={yshift=-3pt}] (3) to node[below] {\fluxcolor{$\tfrac{b}{a}\gamma_{\yy_1 \to \yy_3}$}} (0);
\draw[myarrow]                                (3) to node[above] {\fluxcolor{$\gamma_{\yy_1 \to \yy_3}$}} (5);

\node at (2.5,1.25) {$H'$};

\node at (5.75,0) {\phantom{0}};

\end{scope}

\draw[rounded corners,lightgray] (current bounding box.south west) rectangle (current bounding box.north east);

\end{tikzpicture}
    \caption{Illustration of the idea of the proof of \Cref{lem:rank-1}.}
    \label{fig:H_and_H'}
\end{figure}

Note that \Cref{lem:rank-1} can equivalently be formulated in terms of rate constants instead of fluxes.
Systematically finding even smaller subgraphs $H'$ such that $\Fdt(G,H')=\Fdt(G,G^{\max})$ is an interesting direction for future work.
\section{Examples}
\label{sec:examples}

In this section, we analyze several examples from the reaction networks literature to illustrate the practical applicability of the theory presented in this paper.

We describe the E-graphs via drawing them in $\rr^n$, except in \Cref{subsec:4d}, where $n=4$. We number the edges of the E-graphs and then index the associated rate constants and flux values accordingly, e.g.,  $\kappa_i$ (or $\lambda_i$) and $\beta_i$ (or $\gamma_i$) denote the rate constant and the flux value on the $i$th edge, respectively. As in \Cref{lem:semialgebraic_F}, we denote by (DE-f) and (VB-f) the set of equations that describe dynamically equal fluxes and vertex-balanced fluxes, respectively. The calculations are available in the Mathematica Notebook \cite{boros_github}.

For the examples in \Cref{subsec:square_rev,subsec:square_parallelogram,subsec:LVA_rev,subsec:B-T_13,subsec:4d}, we find that the disguised toric locus is a full-dimensional semialgebraic set in $\rrpp^{|E|}$ that is a proper subset of $\rrpp^{|E|}$. In order to have an impression of their size, we calculate numerically what percentage of the simplex $\{\vv\kk \in \rrpp^{|E|} \st \sum_{i=1}^{|E|}\kk_i=1\}$ belongs to it.

Before we dive into the examples, we recall that the \emph{deficiency} of an E-graph $G$ is a nonnegative integer (see \Cref{def:dfc} below). Provided $G$ is weakly reversible, the deficiency is precisely the codimension of the toric locus $\Kt(G)$, see \cite{craciun:dickenstein:shiu:sturmfels:2009}.
\begin{definition} \label{def:dfc}
For an E-graph $G = (V,E)$, its \emph{deficiency} is $\delta = |V| - \ell - \dim\mc{S}$, where $\ell$ is the number of connected components of $G$, and $\mc{S}$ is the stoichiometric subspace of $G$.
\end{definition}

\subsection{Reversible square}
\label{subsec:square_rev}

\begin{align}\label{tikz:square_rev}
\begin{aligned}
\begin{tikzpicture}[scale=2]

\tikzset{mybullet/.style={inner sep=1.2pt,outer sep=4pt,draw,fill,Egraphcolor,circle}};
\tikzset{myarrow/.style={arrows={-stealth},very thick,Egraphcolor}};

\draw [step=1, gray, very thin] (0,0) grid (1.25,1.25);
\draw [ -, black] (0,0)--(1.25,0);
\draw [ -, black] (0,0)--(0,1.25);

\node[mybullet]  (0) at (0,0) {};
\node[mybullet]  (X) at (1,0) {};
\node[mybullet] (XY) at (1,1) {};
\node[mybullet]  (Y) at (0,1) {};

\draw[myarrow,transform canvas={yshift=-2pt}]  (0) to node[below] {\tiny $1$} (X);
\draw[myarrow,transform canvas={yshift=2pt}]  (X) to node[above] {\tiny $5$} (0);
\draw[myarrow,transform canvas={xshift=2pt}]  (X) to node[right] {\tiny $2$} (XY);
\draw[myarrow,transform canvas={xshift=-2pt}]  (XY) to node[left] {\tiny $6$} (X);
\draw[myarrow,transform canvas={yshift=2pt}] (XY) to node[above] {\tiny $3$} (Y);
\draw[myarrow,transform canvas={yshift=-2pt}] (Y) to node[below] {\tiny $7$} (XY);
\draw[myarrow,transform canvas={xshift=-2pt}]  (Y) to node[left] {\tiny $4$} (0);
\draw[myarrow,transform canvas={xshift=2pt}]  (0) to node[right] {\tiny $8$} (Y);

\node[right] at (1.1,1.2) {$G$};

\node at (0.5,-0.75) {$\begin{aligned}
\frac{\mathrm{d}x_1}{\mathrm{d}t} &= \kappa_1   - \kappa_5 x_1  + \kappa_7 x_2 - \kappa_3 x_1 x_2 \\
\frac{\mathrm{d}x_2}{\mathrm{d}t} &= \kappa_2 x_1 - \kappa_6 x_1x_2 + \kappa_8   - \kappa_4 x_2
\end{aligned}$};
\node at (-1,0.5) {\phantom{0}};

\begin{scope}[shift={(2.5,0)}]

\draw [step=1, gray, very thin] (0,0) grid (1.25,1.25);
\draw [ -, black] (0,0)--(1.25,0);
\draw [ -, black] (0,0)--(0,1.25);

\node[mybullet]  (0) at (0,0) {};
\node[mybullet]  (X) at (1,0) {};
\node[mybullet] (XY) at (1,1) {};
\node[mybullet]  (Y) at (0,1) {};

\draw[myarrow,transform canvas={yshift=-2pt}]  (0) to node[below] {\tiny $1$} (X);
\draw[myarrow,transform canvas={yshift=2pt}]  (X) to node[above] {\tiny $5$} (0);
\draw[myarrow,transform canvas={xshift=2pt}]  (X) to node[right] {\tiny $2$} (XY);
\draw[myarrow,transform canvas={xshift=-2pt}]  (XY) to node[left] {\tiny $6$} (X);
\draw[myarrow,transform canvas={yshift=2pt}] (XY) to node[above] {\tiny $3$} (Y);
\draw[myarrow,transform canvas={yshift=-2pt}] (Y) to node[below] {\tiny $7$} (XY);
\draw[myarrow,transform canvas={xshift=-2pt}]  (Y) to node[left] {\tiny $4$} (0);
\draw[myarrow,transform canvas={xshift=2pt}]  (0) to node[right] {\tiny $8$} (Y);
\draw[myarrow,transform canvas={xshift=1.4,yshift=-1.4pt}]  (0) to node[pos=0.35, below] {\tiny $9$} (XY);
\draw[myarrow,transform canvas={xshift=-1.4pt,yshift=1.4pt}]  (XY) to node[pos=0.65, left] {\tiny $10$} (0);
\draw[myarrow,transform canvas={xshift=1.4pt, yshift=1.4pt}]  (X) to node[pos=0.35, right] {\tiny $11$} (Y);
\draw[myarrow,transform canvas={xshift=-1.4pt, yshift=-1.4pt}]  (Y) to node[pos=0.65, below] {\tiny $12$} (X);

\node[right] at (1.1,1.2) {$G^{\max}$};
\node at (1.75,0.5) {\phantom{0}};

\end{scope}

\draw[rounded corners,lightgray] (current bounding box.south west) rectangle (current bounding box.north east);

\end{tikzpicture}
\end{aligned}
\end{align}

For all $\vv \kappa \in \rrpp^8$, the mass-action differential equation associated with the reversible square $G$ (shown in \eqref{tikz:square_rev}) has a unique positive equilibrium, and this equilibrium is globally asymptotically stable. This follows by combining the Deficiency-One Theorem \cite{feinberg:1987}, permanence \cites{simon:1995, craciun:nazarov:pantea:2013, gopalkrishnan:miller:shiu:2014, boros:hofbauer:2020, anderson:cappelletti:kim:nguyen:2020}, and the exclusion of a periodic solution \cite{boros:hofbauer:2022}*{Section 4}. Further, the E-graph $G$ is known to have a full-dimensional disguised toric locus $\Kdt(G)$, but its explicit description has not been available \cite{craciun:deshpande:jin:2024a}*{Example 3.11}. Below, we calculate the disguised toric flux cone $\Fdt(G) = (\overline{\Fdt(G, G^{\max})})_{>0}$, which allows us to derive an explicit formula for $\Kdt(G)$. Recall that for the rate constants in $\Kdt(G)$, besides global asymptotic stability, it also follows that the Horn--Jackson function \eqref{eq:H-J} is a global Lyapunov function, providing us with a finer understanding of the dynamics.

The set of equilibrium fluxes is
\begin{align*}
\Feq(G) = \{\vv\beta \in \rrpp^8 \st \beta_1-\beta_5 = \beta_3 - \beta_7 \text{ and } \beta_2-\beta_6 = \beta_4 - \beta_8\}.    
\end{align*}
By definition, $\vv\beta \in \Fdt(G)$ if and only if there exists a $\vv \gamma \in \mathbb{R}^{12}_{\geq0}$ such that
\begin{gather}
\begin{alignedat}{4}
\beta_1&= \gamma_1 + \gamma_9,    & \qquad \beta_2&= \gamma_2 + \gamma_{11}, & \qquad \beta_3 &= \gamma_3 + \gamma_{10}, & \qquad \beta_4 &= \gamma_4 + \gamma_{12}, \\
\beta_5&= \gamma_5 + \gamma_{11}, & \qquad \beta_6&= \gamma_6 + \gamma_{10}, & \qquad \beta_7 &= \gamma_7 + \gamma_{12}, & \qquad \beta_8 &= \gamma_8 + \gamma_9;
\end{alignedat}
\tag{DE-f} \\
\begin{alignedat}{2}
  \gamma_1 + \gamma_8 + \gamma_9    &= \gamma_4 + \gamma_5 + \gamma_{10}, & \qquad \gamma_2 + \gamma_5 + \gamma_{11} &= \gamma_1 + \gamma_6 + \gamma_{12}, \\
  \gamma_3 + \gamma_6 + \gamma_{10} &= \gamma_2 + \gamma_7 + \gamma_9,    & \qquad \gamma_4 + \gamma_7 + \gamma_{12} &= \gamma_3 + \gamma_8 + \gamma_{11}.
\end{alignedat}
\tag{VB-f}
\end{gather}
Note that the way we handle \emph{all} subgraphs of $G^{\max}$ at once is that we allow some coordinates of $\vv\gamma$ to vanish (alternatively, we could require that $\vv\gamma \in \rrpp^{12}$ and get $\Fdt(G,G^{\max})$, and then $\Fdt(G)=(\overline{\Fdt(G,G^{\max})})_{>0}$, see \Cref{thm:FdtG_is_clFdtGGmax}).
Further, notice that any of the four equations on vertex balancing is a consequence of the other three. Additionally, under $\vv \beta \in \Feq(G)$, another two equations are redundant. Hence, we have nine independent linear equations. Solving those for $\gamma_1, \ldots, \gamma_9$ yields
\begin{align*}
\begin{alignedat}{3}
  \gamma_1 &= \beta_4 + \beta_5 - \beta_8 + \gamma_{10} - \gamma_{11} - \gamma_{12},           & \qquad \gamma_2 &= \beta_2 - \gamma_{11}, & \qquad \gamma_5 &= \beta_5 - \gamma_{11}, \\
  \gamma_8 &= \beta_4 + \beta_5 - \beta_1 + \gamma_{10} - \gamma_{11} - \gamma_{12},           & \qquad \gamma_3 &= \beta_3 - \gamma_{10}, & \qquad \gamma_6 &= \beta_6 - \gamma_{10}, \\
  \gamma_9 &= \beta_1 - \beta_5 - \beta_2 + \beta_6 - (\gamma_{10} - \gamma_{11} - \gamma_{12}), & \qquad \gamma_4 &= \beta_4 - \gamma_{12}, & \qquad \gamma_7 &= \beta_7 - \gamma_{12}.
\end{alignedat}
\end{align*}
Thus, a $\vv\beta \in \Feq(G)$ is in $\Fdt(G)$ if and only if there exist $\gamma_{10}, \gamma_{11}, \gamma_{12}$ such that
\begin{gather*}
0 \leq \gamma_{10} \leq \min(\beta_3, \beta_6), \quad 0 \leq \gamma_{11} \leq \min(\beta_2, \beta_5), \quad 0 \leq \gamma_{12} \leq \min(\beta_4, \beta_7), \\
\max(\beta_1,\beta_8) - \beta_4 - \beta_5 \leq \gamma_{10} - \gamma_{11} - \gamma_{12} \leq \beta_1 - \beta_5 -\beta_2 + \beta_6.
\end{gather*}
Consequently, a $\vv \beta \in \Feq(G)$ is in $\Fdt(G)$ if and only if
\begin{align}
\label{eq:square_rev_a}  \max(\beta_1, \beta_8) - \beta_4 - \beta_5      &\leq \min(\beta_3, \beta_6), \\
\label{eq:square_rev_b} -\min(\beta_2, \beta_5) - \min(\beta_4, \beta_7) &\leq \beta_1 - \beta_5 - \beta_2 + \beta_6. 
\end{align}
Provided $\vv \beta \in \Feq(G)$, one finds that the inequalities \eqref{eq:square_rev_a} and \eqref{eq:square_rev_b} are equivalent to
\begin{align}
\label{eq:square_rev_c}    (\beta_1 - \beta_8)(\beta_3 - \beta_6) &\leq (\beta_2 + \beta_5)(\beta_4 + \beta_7), \\
\label{eq:square_rev_d}    (\beta_2 - \beta_5)(\beta_4 - \beta_7) &\leq (\beta_1 + \beta_8)(\beta_3 + \beta_6).    
\end{align}
In fact, under $\vv \beta \in \Feq(G)$, formula \eqref{eq:square_rev_a} is equivalent to \eqref{eq:square_rev_c}, and \eqref{eq:square_rev_b} is equivalent to \eqref{eq:square_rev_d}. Hence,
\begin{align*}
    \Fdt(G) = \{\vv\beta \in \Feq(G) \st \text{\eqref{eq:square_rev_c} and \eqref{eq:square_rev_d} hold}\}.
\end{align*}
We emphasize that, although \eqref{eq:square_rev_c} and \eqref{eq:square_rev_d} describe a nonlinear object in $\rrpp^8$, its intersection with $\Feq(G)$ is linear.

It remains to find the disguised toric locus $\Kdt(G)$ (which is homeomorphic to $\rrpp^2 \times \Fdt(G)$, see \Cref{thm:homeo_Psi}). In other words, we aim to figure out for which $\vv \kappa \in \Keq(G) = \rrpp^8$ does there exist an equilibrium $\xx^* \in \rrpp^2$ for which
\begin{align*}
\begin{aligned}
    (\kappa_1 - \kappa_8)(\kappa_3 x^*_1 x^*_2 - \kappa_6 x^*_1 x^*_2) &\leq (\kappa_2 x^*_1 + \kappa_5 x^*_1)(\kappa_4 x^*_2 + \kappa_7 x^*_2), \\
    (\kappa_2 x^*_1 - \kappa_5 x^*_1)(\kappa_4 x^*_2 - \kappa_7 x^*_2) &\leq (\kappa_1 + \kappa_8)(\kappa_3 x^*_1 x^*_2 + \kappa_6 x^*_1 x^*_2).
\end{aligned} 
\end{align*}
Since both inequalities can be simplified by $x^*_1x^*_2$, it follows that
\begin{align*}
\Kdt(G)=\left\{\vv\kk \in \rrpp^8 \;\middle|\; \begin{aligned}(\kappa_1 - \kappa_8)(\kappa_3 - \kappa_6) &\leq (\kappa_2 + \kappa_5)(\kappa_4 + \kappa_7),\\
(\kappa_2 - \kappa_5)(\kappa_4 - \kappa_7) &\leq (\kappa_1 + \kappa_8)(\kappa_3 + \kappa_6)\end{aligned}\right\}. 
\end{align*}

Numerical simulation shows that approximately 83.3\% of the simplex $\{\vv\kk \in \rrpp^8 \st \sum_{i=1}^8 \kk_i = 1\}$ belongs to $\Kdt(G)$. Observe that, despite $(G,\vv\kappa)$ being globally asymptotically stable for all $\vv\kappa \in \rrpp^8$, we have found that $\Kdt(G) \subsetneq \rrpp^8$. We shed more light on this outcome via the examples in \Cref{subsec:square_parallelogram}.

Finally, we remark that the toric flux cone $\Ft(G)$ and the toric locus $\Kt(G)$ are given by
\begin{align*}
    \Ft(G) &= \{ \vv \beta \in \Feq(G) \st \beta_1 - \beta_5 = \beta_2 - \beta_6 \}, \\
    \Kt(G) &= \{ \vv \kappa \in \rrpp^8 \st K_1 K_3 = K_2 K_4\},
\end{align*}
where
\begin{alignat*}{2}
    K_1 &= \kappa_3 \kappa_4 (\kappa_2 + \kappa_5) + \kappa_5 \kappa_6 (\kappa_4 + \kappa_7), & \quad \quad K_2 &= \kappa_6 \kappa_7 (\kappa_1 + \kappa_8) + \kappa_1 \kappa_4 (\kappa_3 + \kappa_6), \\
    K_3 &= \kappa_7 \kappa_8 (\kappa_2 + \kappa_5) + \kappa_1 \kappa_2 (\kappa_4 + \kappa_7), & \quad \quad K_4 &= \kappa_2 \kappa_3 (\kappa_1 + \kappa_8) + \kappa_5 \kappa_8 (\kappa_3 + \kappa_6).
\end{alignat*}
The formula in $\Kt(G)$ was obtained by the application of the matrix-tree theorem \cite{craciun:dickenstein:shiu:sturmfels:2009}. Since the deficiency of $G$ is one, the toric locus $\Kt(G)$ is a codimension-one set in $\rrpp^8$. For comparison, the disguised toric locus $\Kdt(G)$ is a codimension-zero set.

\subsection{Square vs.\ parallelogram}
\label{subsec:square_parallelogram}

\begin{align}\label{tikz:square_parallelogram}
\begin{aligned}
\begin{tikzpicture}[scale=2]

\tikzset{mybullet/.style={inner sep=1.2pt,outer sep=4pt,draw,fill,Egraphcolor,circle}};
\tikzset{myarrow/.style={arrows={-stealth},very thick,Egraphcolor}};

\draw [step=1, gray, very thin] (0,0) grid (1.25,1.25);
\draw [ -, black] (0,0)--(1.25,0);
\draw [ -, black] (0,0)--(0,1.25);

\node[mybullet]  (0) at (0,0) {};
\node[mybullet]  (X) at (1,0) {};
\node[mybullet] (XY) at (1,1) {};
\node[mybullet]  (Y) at (0,1) {};

\draw[myarrow]                                 (0) to node[below] {\tiny $1$} (X);
\draw[myarrow,transform canvas={xshift=2pt}]   (X) to node[right] {\tiny $2$} (XY);
\draw[myarrow,transform canvas={xshift=-2pt}] (XY) to node[left]  {\tiny $5$} (X);
\draw[myarrow]                                (XY) to node[above] {\tiny $3$} (Y);
\draw[myarrow,transform canvas={xshift=-2pt}]  (Y) to node[left]  {\tiny $4$} (0);
\draw[myarrow,transform canvas={xshift=2pt}]   (0) to node[right] {\tiny $6$} (Y);

\node[right] at (1,1.2) {$G_1$};

\node[right] at (-0.3,-.85) {$\begin{aligned}
\frac{\mathrm{d}x_1}{\mathrm{d}t} &= \kappa_1 - \kappa_3 x_1x_2 \\
\frac{\mathrm{d}x_2}{\mathrm{d}t} &= \kappa_2 x_1 - \kappa_5 x_1x_2  + \kappa_6 - \kappa_4 x_2 
\end{aligned}$};

\begin{scope}[shift={(1.93,0)}]

\draw [step=1, gray, very thin] (0,0) grid (1.25,1.25);
\draw [ -, black] (0,0)--(1.25,0);
\draw [ -, black] (0,0)--(0,1.25);

\node[mybullet]  (0) at (0,0) {};
\node[mybullet]  (X) at (1,0) {};
\node[mybullet] (XY) at (1,1) {};
\node[mybullet]  (Y) at (0,1) {};

\draw[myarrow]                                 (0) to node[below] {\tiny $1$} (X);
\draw[myarrow,transform canvas={xshift=2pt}]   (X) to node[right] {\tiny $2$} (XY);
\draw[myarrow,transform canvas={xshift=-2pt}] (XY) to node[left] {\tiny $5$} (X);
\draw[myarrow]                                (XY) to node[above] {\tiny $3$} (Y);
\draw[myarrow,transform canvas={xshift=-2pt}]  (Y) to node[left] {\tiny $4$} (0);
\draw[myarrow,transform canvas={xshift=2pt}]   (0) to node[right] {\tiny $6$} (Y);
\draw[myarrow,transform canvas={xshift=1.4,yshift=-1.4pt}]  (0) to node[pos=0.5, below] {\tiny $7$} (XY);
\draw[myarrow,transform canvas={xshift=-1.4pt,yshift=1.4pt}]  (XY) to node[pos=0.5, left] {\tiny $8$} (0);

\node[right] at (1,1.2) {$G_1^{\max}$};

\end{scope}

\begin{scope}[shift={(3.86,-2)}]

\draw [step=1, gray, very thin] (0,0) grid (1.25,3.25);
\draw [ -, black] (0,0)--(1.25,0);
\draw [ -, black] (0,0)--(0,3.25);

\node[mybullet]   (Y) at (0,1) {};
\node[mybullet]   (X) at (1,0) {};
\node[mybullet] (X2Y) at (1,2) {};
\node[mybullet]  (3Y) at (0,3) {};

\draw[myarrow]                                 (Y) to node[below] {\tiny $1$} (X);
\draw[myarrow,transform canvas={xshift=2pt}]   (X) to node[right] {\tiny $2$} (X2Y);
\draw[myarrow,transform canvas={xshift=-2pt}] (X2Y) to node[left] {\tiny $5$} (X);
\draw[myarrow]                                (X2Y) to node[above] {\tiny $3$} (3Y);
\draw[myarrow,transform canvas={xshift=-2pt}]  (3Y) to node[left] {\tiny $4$} (Y);
\draw[myarrow,transform canvas={xshift=2pt}]   (Y) to node[right] {\tiny $6$} (3Y);

\node[right] at (1,3.2) {$G_2$};

\node[right] at (-2,0) {$\begin{aligned}
\frac{\mathrm{d}x_1}{\mathrm{d}t} &= \kappa_1 x_2 - \kappa_3 x_1x_2^2 \\
\frac{\mathrm{d}x_2}{\mathrm{d}t} &= - \kappa_1 x_2 + \kappa_3 x_1x_2^2 +2(\kappa_2 x_1 - \kappa_5 x_1 x_2^2  + \kappa_6 x_2 - \kappa_4 x_2^3) 
\end{aligned}$};

\end{scope}

\begin{scope}[shift={(5.79,-2)}]

\draw [step=1, gray, very thin] (0,0) grid (1.25,3.25);
\draw [ -, black] (0,0)--(1.25,0);
\draw [ -, black] (0,0)--(0,3.25);

\node[mybullet]   (Y) at (0,1) {};
\node[mybullet]   (X) at (1,0) {};
\node[mybullet] (X2Y) at (1,2) {};
\node[mybullet]  (3Y) at (0,3) {};

\draw[myarrow]                                 (Y) to node[below] {\tiny $1$} (X);
\draw[myarrow,transform canvas={xshift=2pt}]   (X) to node[right] {\tiny $2$} (X2Y);
\draw[myarrow,transform canvas={xshift=-2pt}] (X2Y) to node[left] {\tiny $5$} (X);
\draw[myarrow]                                (X2Y) to node[above] {\tiny $3$} (3Y);
\draw[myarrow,transform canvas={xshift=-2pt}]  (3Y) to node[left] {\tiny $4$} (Y);
\draw[myarrow,transform canvas={xshift=2pt}]   (Y) to node[right] {\tiny $6$} (3Y);
\draw[myarrow,transform canvas={xshift=1.4,yshift=-1.4pt}]  (Y) to node[pos=0.5, below] {\tiny $7$} (X2Y);
\draw[myarrow,transform canvas={xshift=-1.4pt,yshift=1.4pt}]  (X2Y) to node[pos=0.5, left] {\tiny $8$} (Y);

\node[right] at (1,3.2) {$G_2^{\max}$};

\end{scope}

\draw[rounded corners,lightgray] (current bounding box.south west) rectangle (current bounding box.north east);

\end{tikzpicture}
\end{aligned}
\end{align}

For all $\vv \kappa \in \rrpp^6$, each of the mass-action differential equations associated to $G_1$ and $G_2$ (shown in \eqref{tikz:square_parallelogram}) has a unique positive equilibrium \cite{feinberg:1987}. This equilibrium is globally asymptotically stable for the square $G_1$ (by a similar argument as for the reversible square $G$ in \eqref{tikz:square_rev}). However, the parallelogram $G_2$ admits a supercritical Bautin bifurcation \cite{boros:hofbauer:2022}. Thus, it admits bistability in the following sense: there exists a $\vv \kappa \in \rrpp^6$ for which the asymptotically stable positive equilibrium of $(G_2,\vv\kappa)$ is surrounded by an asymptotically stable limit cycle (and the two stable objects are separated by a repelling limit cycle). At the same time, the E-graph $G_2$ can be obtained by applying an affine transformation to the E-graph $G_1$, and hence, $\Kt(G_1)=\Kt(G_2)$ and $\Kdt(G_1)=\Kdt(G_2)$, see \cite{haque:satriano:sorea:yu:2023}*{Theorem 3.8}. Consequently, although $(G_1,\vv\kappa)$ is globally asymptotically stable for all $\vv\kappa \in \rrpp^6$, the disguised toric locus $\Kdt(G_1)$ is a proper subset of $\rrpp^6$ because there exists an affine transformation of $G_1$ that is not globally asymptotically stable for all rate constants. Referring back to \Cref{subsec:square_rev}, the same reason prevents the reversible square from being disguised vertex-balanced for all rate constants.

Below, we calculate the disguised toric flux cone $\Fdt(G_i)$, which allows us to derive an explicit formula for $\Kdt(G_i)$ for $i=1,2$. The set of equilibrium fluxes is
\begin{align*}
\Feq(G_i) = \{\vv\beta \in \rrpp^6 \st \beta_1=\beta_3 \text{ and } \beta_2+\beta_6 = \beta_4 + \beta_5\}.    
\end{align*}
By definition, a $\vv\beta \in \Feq(G_i)$ is in $\Fdt(G_i)$ if and only if there exists a $\vv\gamma \in \mathbb{R}^{8}_{\geq0}$ such that
\begin{gather}
\beta_1 = \gamma_1 + \gamma_7, \quad \beta_2 = \gamma_2, \quad \beta_3 = \gamma_3 + \gamma_8, \quad \beta_4 = \gamma_4, \quad \beta_5 = \gamma_5 + \gamma_8, \quad \beta_6 = \gamma_6 + \gamma_7;
\tag{DE-f} \\
\gamma_4 +\gamma_8= \gamma_1 + \gamma_6 + \gamma_7, \qquad \gamma_1 + \gamma_5 = \gamma_2, \qquad \gamma_2 + \gamma_7 = \gamma_3 + \gamma_5 + \gamma_8, \qquad \gamma_3 + \gamma_6 = \gamma_4.
\tag{VB-f}
\end{gather}
As in \Cref{subsec:square_rev} above, provided $\vv\beta \in \Feq(G_i)$, three of the ten equations are redundant. Solving the seven independent linear equations for $\gamma_1, \ldots, \gamma_7$ yields
\begin{align*}
\begin{alignedat}{4}
\gamma_1 &=  \beta_4 - \beta_6 + \gamma_8, & \qquad \gamma_2 &= \beta_2, & \qquad \gamma_3 &= \beta_1 - \gamma_8, & \qquad \gamma_7 &=  \beta_1 + \beta_6 - \beta_4 - \gamma_8, \\
\gamma_6 &= \beta_4 - \beta_1 + \gamma_8, & \qquad \gamma_4 &= \beta_4, & \qquad \gamma_5 &= \beta_5 - \gamma_8. &&    
\end{alignedat}
\end{align*}
Thus, a $\vv \beta \in \Feq(G_i)$ is in $\Fdt(G_i)$ if and only if there exists a $\gamma_8\geq0$ such that
\begin{align*}
    \max(\beta_1,\beta_6) -\beta_4 \leq \gamma_8 \leq \min(\beta_1, \beta_5, \beta_1 + \beta_6 - \beta_4).
\end{align*}
A brief calculation then yields
\begin{align*}
    \Fdt(G_i) = \{\vv\beta \in \Feq(G_i) \st |\beta_4 - \beta_6| \leq \beta_1 \leq \beta_4 + \beta_5 \},
\end{align*}
see \cite{boros_github}. Finally, one finds that
\begin{align*}
    \Kdt(G_i) = \{\vv \kappa \in \rrpp^6 \st (1-\tfrac{\kappa_6}{\kappa_1}) (1-\tfrac{\kappa_5}{\kappa_3}) \leq \tfrac{\kappa_2\kappa_4}{\kappa_1\kappa_3} \leq (1+\tfrac{\kappa_6}{\kappa_1}) (1+\tfrac{\kappa_5}{\kappa_3})\}.
\end{align*}
Hence, the codimension of the semialgebraic set $\Kdt(G_i)$ in $\rrpp^6$ is zero. In fact, numerical simulation shows that approximately 58.3\% of the simplex $\{\vv\kk \in \rrpp^6 \st \sum_{i=1}^6 \kk_i = 1\}$ belongs to $\Kdt(G)$. For comparison, we remark that the toric locus $\Kt(G_i)$ and the toric flux cone $\Ft(G_i)$ are given by
\begin{align*}
    \Ft(G_i) &= \{ \beta \in \Feq(G_i) \st \beta_1 = \beta_4 - \beta_6 \},\\
    \Kt(G_i) &= \{ \kappa \in \rrpp^6 \st \tfrac{\kappa_2\kappa_4}{\kappa_1\kappa_3} = (1+\tfrac{\kappa_6}{\kappa_1}) (1+\tfrac{\kappa_5}{\kappa_3})\},
\end{align*}
i.e., $\Ft(G_i)$ is a codimension-one cone in $\Feq(G_i)$ and $\Kt(G_i)$ is a codimension-one semialgebraic set in $\rrpp^6$, this is in line with the fact that the deficiency of $G_i$ equals one.

We conclude this subsection by remarking that the theory developed in this paper also applies to E-graphs whose stoichiometric subspace is a proper subspace of $\rr^n$ (here, $\rr^n$ is where the vertices live). For instance, consider the lifted parallelogram $G_3$ (shown in \eqref{tikz:parallelogram_lifted}), and note that $x_1+x_2+2x_3$ is conserved. The E-graph $G_3$ admits bistability that is inherited from the planar parallelogram, see \cite{boros:hofbauer:2022}*{Section 3.2}, \cite{banaji:boros:hofbauer:2022}*{Theorem 1}, or \cite{banaji:boros:hofbauer:2025}*{Theorem 3.2}. Since the lifted parallelogram $G_3$ is obtained by an affine transformation from the planar parallelogram $G_2$, its disguised toric locus $\Kdt(G_3)$ equals $\Kdt(G_2)$, see \cite{haque:satriano:sorea:yu:2023}.
\begin{align} \label{tikz:parallelogram_lifted}
\begin{aligned}
\begin{tikzpicture}[scale=1.5]

\begin{axis}[view={150}{35}, axis line style=white, width=4.5cm, height=4.5cm, ticks=none, xmin=0, xmax=3.3, ymin=0, ymax=3.3, zmin=0, zmax=3.3]

    \tikzset{bullet/.style={inner sep=1pt,outer sep=1.5pt,draw,fill,Egraphcolor,circle}};
    \tikzset{myarrow/.style={arrows={-stealth},thick,Egraphcolor}};
    \tikzset{mygrid/.style={very thin,gray!50}};
    \tikzset{myaxis/.style={thick,gray}};    
    \newcommand{\ab}{3.2};    
 
    \draw[mygrid] (1,0,\ab) -- (1,0,0) -- (1,\ab,0);
    \draw[mygrid] (2,0,\ab) -- (2,0,0) -- (2,\ab,0);
    \draw[mygrid] (3,0,\ab) -- (3,0,0) -- (3,\ab,0);
    \draw[mygrid] (0,1,\ab) -- (0,1,0) -- (\ab,1,0);
    \draw[mygrid] (0,2,\ab) -- (0,2,0) -- (\ab,2,0);
    \draw[mygrid] (0,3,\ab) -- (0,3,0) -- (\ab,3,0);
    \draw[mygrid] (0,\ab,1) -- (0,0,1) -- (\ab,0,1);
    \draw[mygrid] (0,\ab,2) -- (0,0,2) -- (\ab,0,2);
    \draw[mygrid] (0,\ab,3) -- (0,0,3) -- (\ab,0,3);
        
    \draw[myaxis] (0,0,0) -- (\ab,0,0);    
    \draw[myaxis] (0,0,0) -- (0,\ab,0);    
    \draw[myaxis] (0,0,0) -- (0,0,\ab);    
    
    \node[bullet] (P1) at (0,1,1) {};
    \node[bullet] (P2) at (1,0,1) {};
    \node[bullet] (P3) at (1,2,0) {};
    \node[bullet] (P4) at (0,3,0) {};
    
    \node at (1.5,0,2.5) {\footnotesize $G_3$};
    
    \draw[myarrow]  (P1) to node[yshift=4pt] {\tiny $1$} (P2);
    
    \draw[myarrow,transform canvas={xshift=-1pt,yshift=-1pt}] (P2) to node[xshift=-4pt] {\tiny $2$} (P3);
    \draw[myarrow,transform canvas={xshift=1pt,yshift=1pt}] (P3) to node[xshift=4pt] {\tiny $5$} (P2);
    
    \draw[myarrow] (P3) to node[yshift=-4pt] {\tiny $3$} (P4);

    \draw[myarrow,transform canvas={xshift=1pt,yshift=1pt}] (P4) to node[xshift=4pt] {\tiny $4$} (P1);
    \draw[myarrow,transform canvas={xshift=-1pt,yshift=-1pt}]   (P1) to node[xshift=-4pt] {\tiny $6$} (P4);

\end{axis}

\node at (6.54,1.5) {$\begin{aligned}
\frac{\mathrm{d}x_1}{\mathrm{d}t} &=  \kappa_1 x_2 x_3 - \kappa_3 x_1 x_2^2 \\
\frac{\mathrm{d}x_2}{\mathrm{d}t} &= -\kappa_1 x_2 x_3 + \kappa_3 x_1 x_2^2 +2(\kappa_2 x_1 x_3 - \kappa_5 x_1 x_2^2 + \kappa_6 x_2 x_3 - \kappa_4 x_2^3) \\
\frac{\mathrm{d}x_3}{\mathrm{d}t} &= -\kappa_2 x_1 x_3 + \kappa_5 x_1 x_2^2 + \kappa_4 x_2^3 - \kappa_6 x_2 x_3
\end{aligned}$};

\draw[rounded corners,lightgray] (current bounding box.south west) rectangle (current bounding box.north east);
\end{tikzpicture}
\end{aligned}
\end{align}

\subsection{Reversible Lotka--Volterra autocatalator}
\label{subsec:LVA_rev}

\begin{align} \label{tikz:LVA_rev}
\begin{aligned}
\begin{tikzpicture}[scale=1.2]

\tikzset{mybullet/.style={inner sep=1.2pt,outer sep=2pt,draw,fill,Egraphcolor,circle}};
\tikzset{myarrow/.style={arrows={-stealth},very thick,Egraphcolor}};

\draw [step=1, gray, very thin] (0,0) grid (3.25,2.25);
\draw [ -, black] (0,0)--(3.25,0);
\draw [ -, black] (0,0)--(0,2.25);

\node[mybullet]  (0) at (0,0) {};
\node[mybullet]  (Y) at (0,1) {};
\node[mybullet] (2X) at (2,0) {};
\node[mybullet] (XY) at (1,1) {};
\node[mybullet] (2Y) at (0,2) {};
\node[mybullet] (3X) at (3,0) {};

\draw[myarrow,transform canvas={yshift= 2pt}] (2X) to node[above] {\tiny $1$} (3X);
\draw[myarrow,transform canvas={yshift=-2pt}] (3X) to node[below] {\tiny $2$} (2X);
\draw[myarrow,transform canvas={xshift=-1.4pt,yshift=-1.4pt}] (XY) to node[below] {\tiny $3$} (2Y);
\draw[myarrow,transform canvas={xshift= 1.4pt,yshift= 1.4pt}] (2Y) to node[above] {\tiny $4$} (XY);
\draw[myarrow,transform canvas={xshift= 2pt}] (Y) to node[right] {\tiny $5$} (0);
\draw[myarrow,transform canvas={xshift=-2pt}] (0) to node[left] {\tiny $6$} (Y);
\node at (2.5,1.5) {$G$};

\node at (1.5,-1) {$\begin{aligned}
\frac{\mathrm{d}x_1}{\mathrm{d}t} &= \kappa_1 x_1^2 - \kappa_2 x_1^3 - \kappa_3 x_1 x_2 + \kappa_4 x_2^2 \\
\frac{\mathrm{d}x_2}{\mathrm{d}t} &= \kappa_3 x_1 x_2  - \kappa_4 x_2^2 - \kappa_5 x_2 + \kappa_6 
\end{aligned}$};

\begin{scope}[shift={(4,0)}]
    
\draw [step=1, gray, very thin] (0,0) grid (3.25,2.25);
\draw [ -, black] (0,0)--(3.25,0);
\draw [ -, black] (0,0)--(0,2.25);

\node[mybullet]  (0) at (0,0) {};
\node[mybullet]  (Y) at (0,1) {};
\node[mybullet] (2X) at (2,0) {};
\node[mybullet] (XY) at (1,1) {};
\node[mybullet] (2Y) at (0,2) {};
\node[mybullet] (3X) at (3,0) {};

\draw[myarrow,transform canvas={yshift= 2pt}] (2X) to node[xshift=-5pt,yshift=4pt] {\tiny $1$} (3X);
\draw[myarrow,transform canvas={yshift=-2pt}] (3X) to node[below] {\tiny $2$} (2X);
\draw[myarrow,transform canvas={xshift=-1.4pt,yshift=-1.4pt}] (XY) to node[below] {\tiny $3$} (2Y);
\draw[myarrow,transform canvas={xshift= 1.4pt,yshift= 1.4pt}] (2Y) to node[above] {\tiny $4$} (XY);
\draw[myarrow,transform canvas={xshift= 2pt}] (Y) to node[xshift=4pt] {\tiny $5$} (0);
\draw[myarrow,transform canvas={xshift=-2pt}] (0) to node[left] {\tiny $6$} (Y);
\draw[myarrow,transform canvas={xshift= 2pt}] (2Y) to node[xshift=4pt] {\tiny $7$} (Y);
\draw[myarrow,transform canvas={xshift=-2pt}] (Y) to node[left] {\tiny $8$} (2Y);

\draw[myarrow] (XY) to node[yshift=-3pt] {\tiny $14$} (Y);
\draw[myarrow] (XY) to node[right] {\tiny $13$} (0);
\draw[myarrow] (XY) to node[left]  {\tiny $12$} (2X);
\draw[myarrow] (XY) to node[yshift=-4pt] {\tiny $11$} (3X);
\draw[myarrow] (2X) to node[below] {\tiny $9$} (0);
\draw[myarrow] (2Y) to node[above] {\tiny $10$} (3X);

\node at (2.5,1.5) {$H_1$};

\end{scope}

\begin{scope}[shift={(8,0)}]
    
\draw [step=1, gray, very thin] (0,0) grid (3.25,2.25);
\draw [ -, black] (0,0)--(3.25,0);
\draw [ -, black] (0,0)--(0,2.25);

\node[mybullet]  (0) at (0,0) {};
\node[mybullet]  (Y) at (0,1) {};
\node[mybullet] (2X) at (2,0) {};
\node[mybullet] (XY) at (1,1) {};
\node[mybullet] (2Y) at (0,2) {};
\node[mybullet] (3X) at (3,0) {};

\draw[myarrow,transform canvas={yshift= 2pt}] (2X) to node[above] {\tiny \fluxcolor{$\beta_1+\varepsilon$}} (3X);
\draw[myarrow,transform canvas={yshift=-2pt}] (3X) to node[below] {\tiny \fluxcolor{$\beta_1+\varepsilon$}} (2X);
\draw[myarrow,transform canvas={xshift=-1.4pt,yshift=-1.4pt}] (XY) to node[pos=0.4, below, sloped] {\tiny \fluxcolor{$\beta_3+\frac{\varepsilon}{2}$}} (2Y);
\draw[myarrow,transform canvas={xshift= 1.4pt,yshift= 1.4pt}] (2Y) to node[above, sloped] {\tiny \fluxcolor{$\beta_3+\varepsilon$}} (XY);
\draw[myarrow,transform canvas={xshift= 2pt}] (Y) to node[right] {\tiny \fluxcolor{$\beta_5+\frac{\varepsilon}{2}$}} (0);
\draw[myarrow,transform canvas={xshift=-2pt}] (0) to node[left] {\tiny \fluxcolor{$\beta_5+\varepsilon$}} (Y);
\draw[myarrow] (Y) to node[left] {\tiny \fluxcolor{$\frac{\varepsilon}{2}$}} (2Y);

\draw[myarrow] (XY) to node[xshift=4pt,yshift=4pt] {\tiny \fluxcolor{$\frac{\varepsilon}{2}$}} (2X);
\draw[myarrow] (2X) to node[below] {\tiny \fluxcolor{$\frac{\varepsilon}{2}$}} (0);

\node at (2.5,1.5) {$(H_2,\fluxcolor{\vv\gamma})$};
\node at (3.5,2.2) {\phantom{0}};
\end{scope}

\draw[rounded corners,lightgray] (current bounding box.south west) rectangle (current bounding box.north east);

\end{tikzpicture}
\end{aligned}
\end{align}

The E-graph $G$ shown in \eqref{tikz:LVA_rev} is known as the reversible Lotka--Volterra autocatalator (LVA) \cite{simon:1992}. By a direct analysis, Simon proved that, provided $\frac{\kappa_2\kappa_4\kappa_6}{\kappa_1\kappa_3\kappa_5} > 1$ holds, there is a unique positive equilibrium, and this equilibrium is globally asymptotically stable. The analysis of the associated mass-action differential equation is much more delicate when $\frac{\kappa_2\kappa_4\kappa_6}{\kappa_1\kappa_3\kappa_5} < 1$. Indeed, the E-graph that is obtained from $G$ by omitting the reactions $3\X_1 \to 2\X_1$ and $2\X_2 \to \X_1+\X_2$ is known to admit a supercritical Bogdanov--Takens bifurcation (and hence, a fold bifurcation of equilibria, a supercritical Andronov--Hopf bifurcation, and even a homoclinic bifurcation), see \cite{banaji:boros:hofbauer:2024b}*{Theorem 33}; and it follows from \cite{banaji:boros:hofbauer:2025}*{Theorem 3.2} that $G$ inherits these bifurcations. Furthermore, $G$ admits 5 positive equilibria: e.g.,  there are 3 sinks and 2 saddles when $(\kappa_1, \kappa_2, \kappa_3, \kappa_4, \kappa_5, \kappa_6) = (2, \frac{1}{10}, 3, 1, 3, 1)$.

Below, we calculate the disguised toric flux cone $\Fdt(G)$ and the disguised toric locus $\Kdt(G)$. What we find can be seen as an alternative proof for Simon's global stability result. A short calculation shows that 
\begin{alignat*}{2}
    \Feq(G) &= \{\vv\beta \in \rrpp^6 \st \beta_2-\beta_1 = \beta_4-\beta_3 = \beta_6-\beta_5\}, & \quad\quad \Keq(G) &= \rrpp^6, \\
     \Ft(G) &= \{\vv\beta \in \Feq           \st \beta_2-\beta_1 = 0\}, & \quad\quad \Kt(G) &= \{\vv\kappa \in \Keq(G)\st \tfrac{\kappa_2\kappa_4\kappa_6}{\kappa_1\kappa_3\kappa_5} = 1 \}.
\end{alignat*}
Next, we compute $\Fdt(G)$. Consider the graph $H_1$ in \eqref{tikz:LVA_rev}, and notice that it is obtained from the maximal graph $G^{\max}$ by omitting the reactions $3\X_1 \to \0$, $2\X_2 \to 2\X_1$, $\0 \rightleftarrows 2\X_2$. By \Cref{lem:rank-1}, we have $\Fdt(G,G^{\max}) = \Fdt(G,H_1)$, and hence, by \Cref{thm:FdtG_is_clFdtGGmax}, we find that $\Fdt(G) = (\overline{\Fdt(G, H_1)})_{>0}$. It is straightforward to see that for any $\vv\beta \in \rrpp^{6}$ and for any $\vv\gamma \in \mathbb{R}^{14}_{\geq0}$ for which $(G,\vv\beta)\triangleq (H_1,\vv\gamma)$ holds, we have $\beta_1 = \gamma_1 - 2 \gamma_9$ and $\beta_2 = \gamma_2$. Further, assuming that $(H_1, \vv\gamma)$ is vertex-balanced, we have $\gamma_2 = \gamma_1 + \gamma_{10} + \gamma_{11}$, which implies $\beta_2 - \beta_1 = 2\gamma_9 + \gamma_{10} + \gamma_{11} \geq 0$. Hence, 
\begin{align*}
\Fdt(G) \subseteq \{\vv\beta \in \Feq(G) \st \beta_2 - \beta_1 \geq 0\}.
\end{align*}
On the other hand, assuming $\vv\beta \in \Feq(G)$ fulfills $\beta_2-\beta_1>0$ and letting $\varepsilon = \beta_2 - \beta_1$, the flux $\vv\gamma \in \rrpp^9$ on $H_2$ that is shown in magenta in \eqref{tikz:LVA_rev} is a vertex-balanced flux for which $(G,\vv\beta)\triangleq (H_2,\vv\gamma)$ holds. Consequently,
\begin{align*}
    \Fdt(G) = \{\vv\beta \in \Feq(G) \st \beta_2 - \beta_1 \geq 0\}.
\end{align*}
Next, we find an explicit description of $\Kdt(G)$. For $\vv\kappa \in \rrpp^6$ and an equilibrium $\xx^* \in \rrpp^2$ of $(G,\vv\kappa)$, let
\begin{align*}
    a^*_{\vv\kappa} = (\kappa_2 x_1^* - \kappa_1)(x_1^*)^2 = (\kappa_4 x_2^* - \kappa_3 x_1^*)x_2^* = \kappa_6 - \kappa_5 x_2^*.
\end{align*}
It is straightforward to see that
\begin{align*}
    \sgn a^*_{\vv\kappa} = \sgn ( \tfrac{\kappa_2\kappa_4\kappa_6}{\kappa_1\kappa_3\kappa_5} - 1 ),
\end{align*}
and therefore,
\begin{align*}
    \Kdt(G) = \{\vv\kappa \in \rrpp^6 \st \tfrac{\kappa_2\kappa_4\kappa_6}{\kappa_1\kappa_3\kappa_5} \geq 1 \}.
\end{align*}
Hence, we have revealed that all those systems for which Simon proved global asymptotic stability in \cite{simon:1992} are disguised vertex-balanced systems, implying that the Horn--Jackson function \eqref{eq:H-J} is a global Lyapunov function (while the approach by Simon is independent of the Horn--Jackson function). Since planar reversible systems are permanent \cites{simon:1995, craciun:nazarov:pantea:2013}, global asymptotic stability of the unique positive equilibrium follows for every $\vv\kappa \in \Kdt(G)$.

Finally, notice that precisely 50\% of the simplex $\{\vv\kk \in \rrpp^6 \st \sum_{i=1}^6 \kk_i = 1\}$ belongs to $\Kdt(G)$.

\subsection{A network with a subcritical Bogdanov--Takens bifurcation}
\label{subsec:B-T_13}

\begin{align} \label{tikz:B-T_13}
\begin{aligned}
\begin{tikzpicture}[scale=1.2]

\tikzset{mybullet/.style={inner sep=1.2pt,outer sep=3.5pt,draw,fill,Egraphcolor,circle}};
\tikzset{myarrow/.style={arrows={-stealth},very thick,Egraphcolor}};

\draw [step=1, gray, very thin] (0,0) grid (3.25,1.25);
\draw [ -, black] (0,0)--(3.25,0);
\draw [ -, black] (0,0)--(0,1.25);

\node[mybullet]  (0) at (0,0) {};
\node[mybullet]  (X) at (1,0) {};
\node[mybullet] (2X) at (2,0) {};
\node[mybullet] (3X) at (3,0) {};
\node[mybullet] (XY) at (1,1) {};

\draw[myarrow]  (X) to node[below] {\tiny $1$} (0);
\draw[myarrow]  (0) to node[left] {\tiny $2$} (XY);
\draw[myarrow] (XY) to node[right] {\tiny $3$} (2X);
\draw[myarrow,transform canvas={yshift=2pt}] (2X) to node[above] {\tiny $4$} (3X);
\draw[myarrow,transform canvas={yshift=-2pt}] (3X) to node[below] {\tiny $5$} (2X);
\node at (2.5,1.2) {$G$};

\node at (2,-1) {$\begin{aligned}
\frac{\mathrm{d}x_1}{\mathrm{d}t} &= -\kappa_1 x_1 + \kappa_2 + \kappa_3 x_1 x_2 + \kappa_4 x_1^2 - \kappa_5 x_1^3 \\
\frac{\mathrm{d}x_2}{\mathrm{d}t} &=               \kappa_2 - \kappa_3 x_1 x_2 
\end{aligned}$};

\begin{scope}[shift={(4,0)}]

\draw [step=1, gray, very thin] (0,0) grid (3.25,1.25);
\draw [ -, black] (0,0)--(3.25,0);
\draw [ -, black] (0,0)--(0,1.25);

\node[mybullet]  (0) at (0,0) {};
\node[mybullet]  (X) at (1,0) {};
\node[mybullet] (2X) at (2,0) {};
\node[inner sep=1.2pt,outer sep=6pt,draw,fill,Egraphcolor,circle] (3X) at (3,0) {};
\node[mybullet] (XY) at (1,1) {};

\draw[myarrow]  (X) to node[below] {\tiny $1$} (0);
\draw[myarrow,transform canvas={xshift=-1.4pt, yshift=1.4pt}]  (0) to node[left] {\tiny $2$} (XY);
\draw[myarrow] (XY) to node[right] {\tiny $3$} (2X);
\draw[myarrow,transform canvas={yshift=2pt}] (2X) to node[above left] {\tiny $4$} (3X);
\draw[myarrow,transform canvas={yshift=-2pt}] (3X) to node[below left] {\tiny $5$} (2X);
\draw[myarrow,transform canvas={yshift=2pt}]  (X) to node[above] {\tiny $6$} (2X);
\draw[myarrow,transform canvas={yshift=-2pt}] (2X) to node[below] {\tiny $7$}  (X);
\draw[myarrow,transform canvas={xshift=1.4pt, yshift=-1.4pt}] (XY) to node[below] {\tiny $8$} (0);
\draw[myarrow] (XY) to node[left] {\tiny $9$} (X);
\draw[myarrow] (XY) to node[above] {\tiny $10$} (3X);

\node at (2.5,1.2) {$H_1$};

\end{scope}

\begin{scope}[shift={(8,0)}]

\draw [step=1, gray, very thin] (0,0) grid (3.25,1.25);
\draw [ -, black] (0,0)--(3.25,0);
\draw [ -, black] (0,0)--(0,1.25);

\node[mybullet]  (0) at (0,0) {};
\node[mybullet]  (X) at (1,0) {};
\node[mybullet] (2X) at (2,0) {};
\node[mybullet] (3X) at (3,0) {};
\node[mybullet] (XY) at (1,1) {};

\draw[myarrow]  (X) to node[below] {\tiny \fluxcolor{$\beta_2$}}  (0);
\draw[myarrow]  (0) to node[left]  {\tiny \fluxcolor{$\beta_2$}} (XY);
\draw[myarrow] (XY) to node[right] {\tiny \fluxcolor{$\beta_2$}} (2X);
\draw[myarrow,transform canvas={yshift=2pt}]  (2X) to node[above] {\tiny \fluxcolor{$\beta_5$}}            (3X);
\draw[myarrow,transform canvas={yshift=-2pt}] (3X) to node[below] {\tiny \fluxcolor{$\beta_5$}}            (2X);
\draw[myarrow,transform canvas={yshift=2pt}]   (X) to node[above,xshift=-5pt]      {\tiny \fluxcolor{$\beta_2 - \beta_1$}}  (2X);
\draw[myarrow,transform canvas={yshift=-2pt}] (2X) to node[below]      {\tiny \fluxcolor{$2\beta_2 - \beta_1$}}  (X);

\node at (2.5,1.2) {$(H_2,\fluxcolor{\vv\gamma})$};
\node at (3.5,1.5) {\phantom{0}};
\end{scope}

\draw[rounded corners,lightgray] (current bounding box.south west) rectangle (current bounding box.north east);

\end{tikzpicture}
\end{aligned}
\end{align}

The E-graph obtained from $G$ in \eqref{tikz:B-T_13} by omitting the reaction $3\X_1 \to 2\X_1$ is known to admit a subcritical Bogdanov--Takens bifurcation (and hence, a fold bifurcation of equilibria, a subcritical Andronov--Hopf bifurcation, and even a homoclinic bifurcation), see \cite{banaji:boros:hofbauer:2024b}*{Theorem 33}. It follows from \cite{banaji:boros:hofbauer:2025}*{Theorem 3.2} that $G$ inherits these bifurcations. Furthermore, $G$ admits 3 positive equilibria: e.g.,  there are 2 sinks and 1 saddle when $(\kappa_1, \kappa_2, \kappa_3, \kappa_4, \kappa_5) = (7, 1, 2, 7, 2)$.

Below, we compute the disguised toric flux cone $\Fdt(G)$ and the disguised toric locus $\Kdt(G)$. A short calculation shows that 
\begin{alignat*}{2}
    \Feq(G) &= \{\beta \in \rrpp^5 \st \beta_2 = \beta_3 \text{ and } \beta_2+\beta_3+\beta_4=\beta_1+\beta_5 \}, & \quad\quad \Keq(G) &= \rrpp^5, \\
     \Ft(G) &= \emptyset, & \quad\quad \Kt(G) &= \emptyset.
\end{alignat*}
Next, we compute $\Fdt(G)$. Consider the graph $H_1$ in \eqref{tikz:B-T_13}, and notice that it is obtained from the maximal graph $G^{\max}$ by omitting the reactions $2\X_1 \to \0$, $3\X_1 \to \0$, $\X_1 \rightleftarrows 3\X_1$. By \Cref{lem:rank-1}, we have $\Fdt(G,G^{\max}) = \Fdt(G,H_1)$, and hence, by \Cref{thm:FdtG_is_clFdtGGmax}, we find that $\Fdt(G) = (\overline{\Fdt(G, H_1)})_{>0}$. It is straightforward to see that for any $\vv\beta \in \rrpp^{5}$ and for any $\vv\gamma \in \mathbb{R}^{10}_{\geq0}$ for which $(G,\vv\beta)\triangleq (H_1,\vv\gamma)$ holds, we have $\beta_1 = \gamma_1 - \gamma_6$ and $\beta_2 = \gamma_2$. Further, assuming that $(H_1, \vv\gamma)$ is vertex-balanced, we have $\gamma_2 = \gamma_1 + \gamma_8$, which implies $\beta_2 - \beta_1 = \gamma_6 + \gamma_8 \geq 0$. Hence, 
\begin{align*}
\Fdt(G) \subseteq \{\vv\beta \in \Feq(G) \st \beta_2 \geq \beta_1\}.
\end{align*}
On the other hand, assuming $\vv\beta \in \Feq(G)$ fulfills $\beta_2\geq\beta_1$, the flux $\vv\gamma$ on $H_2$ that is shown in magenta in \eqref{tikz:B-T_13} is a vertex-balanced flux for which $(G,\vv\beta) \triangleq (H_2,\vv\gamma)$ holds (in the limit case $\beta_2 = \beta_1$, $H_2$ has one fewer edge). Consequently,
\begin{align*}
    \Fdt(G) = \{\vv\beta \in \Feq(G) \st \beta_2 \geq \beta_1\}.
\end{align*}
Hence, a $\vv\kappa \in \rrpp^5$ is in $\Kdt(G)$ if and only if $(G,\vv\kk)$ has an equilibrium $\xx^* \in \rrpp^2$ with $x_1^* \leq \tfrac{\kappa_2}{\kappa_1}$. A short Mathematica calculation gives
\begin{align*}
\Kdt(G) = \{\vv\kappa \in \rrpp^5 \st \tfrac{\kappa_5}{\kappa_1} \geq \tfrac{\kappa_1^2}{\kappa_2^2} + \tfrac{\kappa_4}{\kappa_2}\}.
\end{align*}
Numerical simulation shows that approximately 35.4\% of the simplex $\{\vv\kk \in \rrpp^5 \st \sum_{i=1}^5 \kk_i = 1\}$ belongs to $\Kdt(G)$.

Note that, for all $\vv\kappa \in \rrpp^5$, $(G,\vv\kappa)$ is dynamically equal to a planar, weakly reversible mass-action system (this can be seen by adding the reactions $\X_1 \stackrel{\varepsilon}{\leftarrow} 2\X_1 \stackrel{\varepsilon}{\to} 3\X_1$), and hence, it is permanent. Therefore, for all $\vv\kappa \in \Kdt(G)$, the unique positive equilibrium of $(G,\vv\kappa)$ is globally asymptotically stable. The more involved dynamics described in the first paragraph of this subsection can only occur for $\vv \kappa \in \rrpp^5$ with $\tfrac{\kappa_5}{\kappa_1} < \tfrac{\kappa_1^2}{\kappa_2^2} + \tfrac{\kappa_4}{\kappa_2}$.

\subsection{Basic clock mechanism}

\begin{align} \label{tikz:circadian}
\begin{aligned}
\begin{tikzpicture}[scale=1.2]

\begin{axis}[view={105}{30}, axis line style=white, width=5.5cm, height=5.5cm, ticks=none, xmin=0, xmax=1.9, ymin=0, ymax=1.9, zmin=0, zmax=1.5]

    \tikzset{bullet/.style={inner sep=1pt,outer sep=1.5pt,draw,fill,Egraphcolor,circle}};
    \tikzset{myarrow/.style={arrows={-stealth},thick,Egraphcolor}};
    \tikzset{mygrid/.style={very thin,gray!50}};
    \tikzset{myaxis/.style={thick,gray}};    
    \newcommand{\ab}{2.5};    
 
    \draw[mygrid] (1,0,\ab) -- (1,0,0) -- (1,\ab,0);
    \draw[mygrid] (2,0,\ab) -- (2,0,0) -- (2,\ab,0);
    \draw[mygrid] (0,1,\ab) -- (0,1,0) -- (\ab,1,0);
    \draw[mygrid] (0,2,\ab) -- (0,2,0) -- (\ab,2,0);
    \draw[mygrid] (0,\ab,1) -- (0,0,1) -- (\ab,0,1);
    \draw[mygrid] (0,\ab,2) -- (0,0,2) -- (\ab,0,2);
        
    \draw[myaxis] (0,0,0) -- (\ab,0,0);    
    \draw[myaxis] (0,0,0) -- (0,\ab,0);    
    \draw[myaxis] (0,0,0) -- (0,0,\ab);    
    
    \node[bullet] (P1) at (0,0,0) {};
    \node[bullet] (P2) at (1,0,0) {};
    \node[bullet] (P3) at (0,1,0) {};
    \node[bullet] (P4) at (1,1,0) {};
    \node[bullet] (P5) at (0,0,1) {};
    
    \node [above left]  at (P1) {\small $\0$};
    \node [below right] at (P2) {\small $\mathsf{T}$};
    \node [above right] at (P3) {\small $\mathsf{P}$};
    \node [right] at (P4) {\small $\mathsf{T}+\mathsf{P}$};
    \node [above right] at (P5) {\small $\mathsf{C}$};
    \node[right] at (0,1.25,1.25) {$G$};
    
    \draw[myarrow,transform canvas={xshift=-1.5pt,yshift=1pt}]  (P1) to node[xshift=-2pt,yshift=3pt] {\tiny $1$} (P2);
    \draw[myarrow,transform canvas={xshift=1.5pt,yshift=-1pt}] (P2) to node[xshift=0pt,yshift=-5pt] {\tiny $2$} (P1);
    
    \draw[myarrow,transform canvas={xshift=1.5pt,yshift=1.5pt}] (P1) to node[xshift=12pt,yshift=4pt] {\tiny $3$} (P3);
    \draw[myarrow,transform canvas={xshift=-1.5pt,yshift=-1.5pt}]  (P3) to node[xshift=12pt,yshift=-5pt] {\tiny $4$} (P1);
    
    \draw[myarrow,transform canvas={xshift=1.5pt,yshift=1pt}] (P4) to node[xshift=2pt,yshift=6pt] {\tiny $5$} (P5);
    \draw[myarrow,transform canvas={xshift=-1.5pt,yshift=-1pt}] (P5) to node[xshift=-4pt,yshift=0pt] {\tiny $6$} (P4);

    \draw[myarrow] (P5) to node[xshift=-3pt] {\tiny $7$} (P1);
   
\end{axis}

\node at (2,-1) {$\begin{aligned}
\frac{\mathrm{d}[\sf{T}]}{\mathrm{d}t} &= \kappa_1 - \kappa_2 [\sf{T}] - \kappa_5 [\sf{T}] [\sf{P}] + \kappa_6 [\sf{C}] \\
\frac{\mathrm{d}[\sf{P}]}{\mathrm{d}t} &= \kappa_3 - \kappa_4 [\sf{P}] - \kappa_5 [\sf{T}] [\sf{P}] + \kappa_6 [\sf{C}] \\
\frac{\mathrm{d}[\sf{C}]}{\mathrm{d}t} &= \kappa_5 [\sf{T}] [\sf{P}] - \kappa_6 [\sf{C}] - \kappa_7 [\sf{C}]
\end{aligned}$};

\begin{scope}[shift={(5,0)}]

\begin{axis}[view={105}{30}, axis line style=white, width=5.5cm, height=5.5cm, ticks=none, xmin=0, xmax=1.9, ymin=0, ymax=1.9, zmin=0, zmax=1.5]

    \tikzset{bullet/.style={inner sep=1pt,outer sep=1.5pt,draw,fill,Egraphcolor,circle}};
    \tikzset{myarrow/.style={arrows={-stealth},thick,Egraphcolor}};
    \tikzset{mygrid/.style={very thin,gray!50}};
    \tikzset{myaxis/.style={thick,gray}};    
    \newcommand{\ab}{2.5};    
 
    \draw[mygrid] (1,0,\ab) -- (1,0,0) -- (1,\ab,0);
    \draw[mygrid] (2,0,\ab) -- (2,0,0) -- (2,\ab,0);
    \draw[mygrid] (0,1,\ab) -- (0,1,0) -- (\ab,1,0);
    \draw[mygrid] (0,2,\ab) -- (0,2,0) -- (\ab,2,0);
    \draw[mygrid] (0,\ab,1) -- (0,0,1) -- (\ab,0,1);
    \draw[mygrid] (0,\ab,2) -- (0,0,2) -- (\ab,0,2);
        
    \draw[myaxis] (0,0,0) -- (\ab,0,0);    
    \draw[myaxis] (0,0,0) -- (0,\ab,0);    
    \draw[myaxis] (0,0,0) -- (0,0,\ab);    
    
    \node[style={inner sep=1pt,outer sep=3.5pt,draw,fill,Egraphcolor,circle}] (P1) at (0,0,0) {};
    \node[bullet] (P2) at (1,0,0) {};
    \node[bullet] (P3) at (0,1,0) {};
    \node[style={inner sep=1pt,outer sep=3.5pt,draw,fill,Egraphcolor,circle}] (P4) at (1,1,0) {};
    \node[bullet] (P5) at (0,0,1) {};
    
    \node [above left]  at (P1) {\small $\0$};
    \node [below right] at (P2) {\small $\mathsf{T}$};
    \node [above right] at (P3) {\small $\mathsf{P}$};
    \node [right] at (P4) {\small $\mathsf{T}+\mathsf{P}$};
    \node [above right] at (P5) {\small $\mathsf{C}$};
    \node at (0,1.25,1.25) {$(G^{\max},\fluxcolor{\vv\gamma})$};
    
    \draw[myarrow,transform canvas={xshift=-1.5pt,yshift=1pt}]  (P1) to node[xshift=-4pt,yshift=3pt] {\tiny \textcolor{magenta}{$\beta_2$}} (P2);
    \draw[myarrow,transform canvas={xshift=1.5pt,yshift=-1pt}] (P2) to node[xshift=2pt,yshift=-5pt] {\tiny \textcolor{magenta}{$\beta_2$}} (P1);
    
    \draw[myarrow,transform canvas={xshift=1.5pt,yshift=1.5pt}] (P1) to node[xshift=12pt,yshift=4pt] {\tiny \textcolor{magenta}{$\beta_4$}} (P3);
    \draw[myarrow,transform canvas={xshift=-1.5pt,yshift=-1.5pt}]  (P3) to node[xshift=12pt,yshift=-5pt] {\tiny \textcolor{magenta}{$\beta_4$}} (P1);
    
    \draw[myarrow,transform canvas={xshift=1.5pt,yshift=1pt}] (P4) to node[sloped,xshift=-5pt,yshift=4pt] {\tiny \textcolor{magenta}{$\beta_6+\beta_7$}} (P5);
    \draw[myarrow,transform canvas={xshift=-1.5pt,yshift=-1pt}] (P5) to node[xshift=-4pt,yshift=0pt] {\tiny \textcolor{magenta}{$\beta_6$}} (P4);

    \draw[myarrow] (P5) to node[xshift=-5pt] {\tiny \textcolor{magenta}{$\beta_7$}} (P1);
    \draw[myarrow] (P1) to node[xshift=-5pt] {\tiny \textcolor{magenta}{$\beta_7$}} (P4);

\end{axis}
\node at (4,4) {\phantom{0}};
\end{scope}

\draw[rounded corners,lightgray] (current bounding box.south west) rectangle (current bounding box.north east);

\end{tikzpicture}
\end{aligned}
\end{align}

The E-graph $G$ in \eqref{tikz:circadian} is a simplified model of the circadian clock mechanism in \textit{Drosophila}, where $\sf{P}$ and $\sf{T}$ refer to period (PER) and timeless (TIM) proteins, respectively, while $\sf{C}$ stands for the PER-TIM compound, see \cites{johnston:pantea:donnell:2016, leloup:goldbeter:1999}. Though $G$ is not weakly reversible, for all $\vv \kappa \in \rrpp^7$ there exists a $\vv \lambda \in \rrpp^8$ such that $\vv{f}_{(G,\vv\kappa)} = \vv{f}_{(G^{\max},\vv\lambda)}$ ($G^{\max}$ is shown on the right in \eqref{tikz:circadian}). Hence, $(G,\vv \kappa)$ is dynamically equal to a weakly reversible mass-action system for all $\vv \kappa \in \rrpp^7$, and therefore, $\Keq(G) = \rrpp^7$, see \cite{boros:2019}. It has been shown in \cite{craciun:deshpande:jin:2025b}*{Example 5.3} that $\Kdt(G)$ is a seven-dimensional semialgebraic set in $\rrpp^7$. Below, we argue that, in fact, $\Kdt(G) = \rrpp^7$.

Note that $\Feq(G) = \{ \vv\beta \in \rrpp^7 \st \beta_1 - \beta_2 = \beta_3 - \beta_4 = \beta_5 - \beta_6 = \beta_7 \}$. Further, the E-graph $G^{\max}$ endowed with the flux $\vv\gamma$ shown in magenta is dynamically equal to that of $G$ endowed with the equilibrium flux $\vv\beta$. Since $\vv\gamma$ is vertex-balanced, we conclude that $\Fdt(G) = \Feq(G)$. Hence, by \Cref{prop:Fdt_iff_Kdt}(c), we have $\Kdt(G) = \Keq(G) = \rrpp^7$, i.e., the pair $(G,\vv\kappa)$ is disguised vertex-balanced for all $\vv\kappa \in \rrpp^7$. Since $G^{\max}$ has a single connected component, $(G,\vv\kappa)$ is globally asymptotically stable for all $\vv\kappa \in \rrpp^7$.

\subsection{Tetrahedron}
\label{subsec:tetrahedron}

\begin{align} \label{tikz:tetrahedron}
\begin{aligned}
\begin{tikzpicture}[scale=1.2]

\begin{axis}[view={110}{55}, axis line style=white, width=5.5cm, height=6cm, ticks=none, xmin=0, xmax=3, ymin=0, ymax=3, zmin=0, zmax=3.5]

    \tikzset{bullet/.style={inner sep=1pt,outer sep=1.5pt,draw,fill,Egraphcolor,circle}};
    \tikzset{myarrow/.style={arrows={-stealth},thick,Egraphcolor}};
    \tikzset{mygrid/.style={very thin,gray!50}};
    \tikzset{myaxis/.style={thick,gray}};    
    \newcommand{\ab}{2.75};    
 
    \draw[mygrid] (1,0,\ab) -- (1,0,0) -- (1,\ab,0);
    \draw[mygrid] (2,0,\ab) -- (2,0,0) -- (2,\ab,0);
    \draw[mygrid] (0,1,\ab) -- (0,1,0) -- (\ab,1,0);
    \draw[mygrid] (0,2,\ab) -- (0,2,0) -- (\ab,2,0);
    \draw[mygrid] (0,\ab,1) -- (0,0,1) -- (\ab,0,1);
    \draw[mygrid] (0,\ab,2) -- (0,0,2) -- (\ab,0,2);
        
    \draw[myaxis] (0,0,0) -- (\ab,0,0);    
    \draw[myaxis] (0,0,0) -- (0,\ab,0);    
    \draw[myaxis] (0,0,0) -- (0,0,\ab);    
    
    \node[bullet] (P1) at (0,0,0) {};
    \node[style={inner sep=1pt,outer sep=2.5pt,draw,fill,Egraphcolor,circle}] (P2) at (1,0,0) {};
    \node[bullet] (P3) at (2,0,0) {};
    \node[bullet] (P4) at (0,2,0) {};
    \node[bullet] (P5) at (0,1,1) {};
    \node[bullet] (P6) at (0,0,2) {};
    
    \node [above left]  at (P1) {\small $\0$};
    \node [below right]  at (P2) {\small $\X_1$};
    \node [below right]  at (P3) {\small $2\X_1$};
    \node [above right] at (P4) {\small $2\X_2$};
    \node [above right] at (P5) {\small $\X_2+\X_3$};
    \node [above right] at (P6) {\small $2\X_3$};
    \node at (1.5,1.5,0) {$G$};
    
    \draw[myarrow,transform canvas={xshift=-1.5pt,yshift=0.5pt}]  (P1) to node[xshift=-2pt,yshift=2pt] {\tiny $1$} (P2);
    \draw[myarrow,transform canvas={xshift=1.5pt,yshift=-0.5pt}] (P2) to node[xshift=5pt,yshift=4pt] {\tiny $2$} (P1);
    
    \draw[myarrow,transform canvas={xshift=-1.5pt,yshift=0.5pt}] (P2) to node[xshift=-2pt,yshift=2pt] {\tiny $3$} (P3);
    \draw[myarrow,transform canvas={xshift=1.5pt,yshift=-0.5pt}]  (P3) to node[xshift=2pt,yshift=-2pt] {\tiny $4$} (P2);
    
    \draw[myarrow,transform canvas={xshift=-1pt,yshift=1pt}] (P2) to node[xshift=2pt,yshift=6pt] {\tiny $5$} (P5);
    \draw[myarrow,transform canvas={xshift=1pt,yshift=-1pt}] (P5) to node[xshift=2pt,yshift=-2pt] {\tiny $6$} (P2);

    \draw[myarrow,transform canvas={xshift=.5pt,yshift=1.5pt}] (P4) to node[above] {\tiny $7$} (P5);
    \draw[myarrow,transform canvas={xshift=-.5pt,yshift=-1.5pt}]  (P5) to node[below] {\tiny $8$} (P4);

    \draw[myarrow,transform canvas={xshift=.5pt,yshift=1.5pt}] (P5) to node[above] {\tiny $9$} (P6);
    \draw[myarrow,transform canvas={xshift=-.5pt,yshift=-1.5pt}]  (P6) to node[below] {\tiny $10$} (P5);

\end{axis}

\node at (8,2) {$\begin{aligned}
\frac{\mathrm{d}x_1}{\mathrm{d}t} &= \kappa_1 + (\kappa_3 - \kappa_2- \kappa_5) x_1 - \kappa_4 x_1^2 + \kappa_6 x_2 x_3 \\ 
\frac{\mathrm{d}x_2}{\mathrm{d}t} &= \kappa_5 x_1 - \kappa_7 x_2^2 + (\kappa_8 -\kappa_9 - \kappa_6) x_2 x_3 + \kappa_{10} x_3^2 \\
\frac{\mathrm{d}x_3}{\mathrm{d}t} &= \kappa_5 x + \kappa_7 x_2^2 - (\kappa_8 -\kappa_9 + \kappa_6) x_2 x_3 - \kappa_{10} x_3^2 \\
\end{aligned}$};

\node at (-.25,2) {\phantom{0}};
\node at (11.75,2) {\phantom{0}};

\draw[rounded corners,lightgray] (current bounding box.south west) rectangle (current bounding box.north east);

\end{tikzpicture}
\end{aligned}
\end{align}

Motivated by the network in \cite{johnston:siegel:szederkenyi:2013:mathbiosci}*{Fig. 4 (b)}, let us consider the E-graph $G$ (in $\rr^3$) shown in \eqref{tikz:tetrahedron}. Since $G$ is weakly reversible, $\Keq(G) = \rrpp^{10}$, see \cite{boros:2019}. Further, since $G$ is of deficiency two, the toric locus $\Kt(G)$ has codimension two. Indeed, $\Kt(G) = \{\vv\kappa \in \rrpp^{10} \st \tfrac{\kappa_1}{\kappa_2} = \tfrac{\kappa_3}{\kappa_4} \text{ and } \tfrac{\kappa_7}{\kappa_8} = \tfrac{\kappa_9}{\kappa_{10}}\}$. Below, we show that the disguised toric locus $\Kdt(G)$ equals $\rrpp^{10}$.

The set of equilibrium fluxes is
\begin{align*}
    \Feq(G) = \{\vv\beta \in \rrpp^{10} \st \beta_1 - \beta_2 + \beta_3 - \beta_4 = 0,\, \beta_5 - \beta_6 = 0,\, \beta_7 - \beta_8 + \beta_9 - \beta_{10} = 0\}.
\end{align*}
Note that a $\vv\beta \in \Feq(G)$ is in $\Fdt(G,G)$ if and only if there exists a $\vv \gamma \in \rrpp^{10}$ such that
\begin{gather}
\begin{alignedat}{3}
  \beta_1 &= \gamma_1, & \qquad \beta_6    &= \gamma_6,    & \qquad \beta_3 - \beta_2 - \beta_5  &= \gamma_3 - \gamma_2 - \gamma_5, \\
  \beta_4 &= \gamma_4, & \qquad \beta_7    &= \gamma_7,    & \qquad \beta_8 - \beta_9 - \beta_6  &= \gamma_8 - \gamma_9 - \gamma_6, \\
  \beta_5 &= \gamma_5, & \qquad \beta_{10} &= \gamma_{10}, & \qquad \beta_8 - \beta_9 + \beta_6  &= \gamma_8 - \gamma_9 + \gamma_6;
\end{alignedat}
\tag{DE-f} \\
\begin{alignedat}{3}
  \gamma_1 &= \gamma_2, & \qquad \gamma_7    &= \gamma_8, & \qquad \gamma_2 + \gamma_3 + \gamma_5 &= \gamma_1 + \gamma_4 + \gamma_6, \\
  \gamma_4 &= \gamma_3, & \qquad \gamma_{10} &= \gamma_9, & \qquad \gamma_8 + \gamma_9 + \gamma_6 &= \gamma_7 + \gamma_{10} + \gamma_5.
\end{alignedat}
\tag{VB-f}
\end{gather}
It is straightforward to see that for all $\vv\beta \in \Feq(G)$ there exists a unique $\vv\gamma \in \rrpp^{10}$ that satisfies all 15 linear equations, namely,
\begin{align*}
    \gamma_1 = \gamma_2 = \beta_1, \quad\quad \gamma_3 = \gamma_4 = \beta_4, \quad\quad \gamma_5 = \gamma_6 = \beta_5, \quad\quad \gamma_7 = \gamma_8 = \beta_7, \quad\quad \gamma_9 = \gamma_{10} = \beta_{10}.  
\end{align*}
Hence, $\Fdt(G,G) = \Feq(G)$. Since in general $\Fdt(G,G) \subseteq \Fdt(G) \subseteq \Feq(G)$, it follows that $\Fdt(G) = \Feq(G)$. Consequently, by \Cref{prop:Fdt_iff_Kdt}(c), we have $\Kdt(G) = \Keq(G) = \rrpp^{10}$. Furthermore, since $G$ has only a single connected component, we conclude that, for all $\vv\kappa \in \rrpp^{10}$, the unique positive equilibrium is globally asymptotically stable \cite{anderson:2011a}.

Finally, we argue that $\Kdt(G, G) = \rrpp^{10}$ (and hence, $\Kdt(G) = \rrpp^{10}$) can be seen directly. Indeed, for $\vv\kappa \in \rrpp^{10}$, let $\xx^*$ be a positive equilibrium (we do not need to know a priori that it is unique) and define $\vv\lambda \in \rrpp^{10}$ by
\begin{alignat*}{5}
    \lambda_1 &= \kappa_1, & \quad\quad \lambda_3 &= \kappa_4 x_1^*, & \quad\quad \lambda_5 &= \kappa_5, & \quad\quad \lambda_7 &= \kappa_7, & \quad\quad \lambda_9 &= \kappa_{10}\tfrac{x_3^*}{x_2^*}, \\
    \lambda_2 &= \kappa_1\tfrac{1}{x_1^*}, & \quad\quad \lambda_4 &= \kappa_4, & \quad\quad \lambda_6 &= \kappa_5 \tfrac{x_1^*}{x_2^*x_3^*}, & \quad\quad \lambda_8 &= \kappa_7\tfrac{x_2^*}{x_3^*}, & \quad\quad \lambda_{10} &= \kappa_{10}.
\end{alignat*}
Since $\tfrac{\lambda_1}{\lambda_2} = \tfrac{\lambda_3}{\lambda_4}$ and $\tfrac{\lambda_7}{\lambda_8} = \tfrac{\lambda_9}{\lambda_{10}}$, we have $\vv\lambda \in \Kt(G)$. Further, a direct calculation shows that $(G,\vv\kappa)$ and $(G,\vv\lambda)$ are dynamically equal. Hence, indeed $\Kdt(G) = \rrpp^{10}$.

\subsection{A four-dimensional example}
\label{subsec:4d}

\begin{align} \label{tikz:4d}
\begin{aligned}
\begin{tikzpicture}[scale=1.9]

\tikzset{myarrow/.style={arrows={-Latex},thick}};

\node[outer sep=5pt]  (0) at (0,0)  {$\0$};
\node  (X2) at (1,0)  {$\X_2$};
\node (2X2) at (2,0)  {$2\X_2$};
\node  (X1) at (1,0.75)  {$\X_1$};
\node (2X1) at (2,1.5)  {$2\X_1$};
\node  (X3) at (1,-0.75) {$\X_3$};
\node (2X3) at (2,-1.5) {$2\X_3$};
\node[outer sep=5pt]  (X0) at (3,0)  {$\X_0$};

\draw[myarrow, transform canvas={xshift=-1.4pt,yshift=1.4pt}]  (0) to node[left]  {\tiny $1$} (X1);
\draw[myarrow, transform canvas={xshift=1.4pt,yshift=-1.4pt}]  (X1) to node[right] {\tiny $2$} (0);
\draw[myarrow, transform canvas={yshift=2pt}]                  (0) to node[above] {\tiny $3$} (X2);
\draw[myarrow, transform canvas={yshift=-2pt}]                 (X2) to node[below] {\tiny $4$} (0);
\draw[myarrow, transform canvas={xshift=1.4pt,yshift=1.4pt}]   (0) to node[right] {\tiny $5$} (X3);
\draw[myarrow, transform canvas={xshift=-1.4pt,yshift=-1.4pt}] (X3) to node[left]  {\tiny $6$} (0);

\draw[myarrow, transform canvas={xshift=-1.8pt,yshift=-0.9pt}]  (X0) to node[left]  {\tiny $9$}  (2X1);
\draw[myarrow, transform canvas={xshift= 1.8pt,yshift= 0.9pt}] (2X1) to node[right] {\tiny $10$}  (X0);
\draw[myarrow, transform canvas={yshift=-2pt}]                  (X0) to node[below] {\tiny $11$} (2X2);
\draw[myarrow, transform canvas={yshift=2pt}]                  (2X2) to node[above] {\tiny $12$}  (X0);
\draw[myarrow, transform canvas={xshift= 1.8pt,yshift=-0.9pt}]  (X0) to node[right] {\tiny $13$} (2X3);
\draw[myarrow, transform canvas={xshift=-1.8pt,yshift=0.9pt}]  (2X3) to node[left]  {\tiny $14$}  (X0);

\draw[myarrow,transform canvas={yshift=2pt}] (0) to[out=25, in=155]  node[above] {\tiny $7$}  (X0);
\draw[myarrow,transform canvas={yshift=-2pt}] (X0) to[out=155, in=25] node[below] {\tiny $8$}  (0);

\node at (0.5,1.25) {$G$};

\node at (1.35,-1.6) {$\begin{aligned}
    \dot{x}_1 &= \kappa_1 - \kappa_2 x_1 + 2(\kappa_9    x_0 - \kappa_{10} x_1^2) \\
    \dot{x}_2 &= \kappa_3 - \kappa_4 x_2 + 2(\kappa_{11} x_0 - \kappa_{12} x_2^2) \\
    \dot{x}_3 &= \kappa_5 - \kappa_6 x_3 + 2(\kappa_{13} x_0 - \kappa_{14} x_3^2) \\
    \dot{x}_0 &= \kappa_7 - \kappa_8 x_0 - (\kappa_9 + \kappa_{11} + \kappa_{13})x_0 + \kappa_{10} x_1^2 + \kappa_{12} x_2^2 + \kappa_{14} x_3^2
\end{aligned}$};

\begin{scope}[shift={(3.5,0)}]
    
\node[outer sep=5pt]  (0) at (0,0)  {$\0$};
\node  (X2) at (1,0)  {$\X_2$};
\node (2X2) at (2,0)  {$2\X_2$};
\node  (X1) at (1,.75)  {$\X_1$};
\node (2X1) at (2,1.5)  {$2\X_1$};
\node  (X3) at (1,-.75) {$\X_3$};
\node (2X3) at (2,-1.5) {$2\X_3$};
\node[outer sep=5pt]  (X0) at (3,0)  {$\X_0$};

\draw[myarrow, transform canvas={xshift=-1.4pt,yshift=1.4pt}]  (0) to node[left]  {\tiny $1$} (X1);
\draw[myarrow, transform canvas={xshift=1.4pt,yshift=-1.4pt}]  (X1) to node[right] {\tiny $2$} (0);
\draw[myarrow, transform canvas={yshift=2pt}]                  (0) to node[above] {\tiny $3$} (X2);
\draw[myarrow, transform canvas={yshift=-2pt}]                 (X2) to node[below] {\tiny $4$} (0);
\draw[myarrow, transform canvas={xshift=1.4pt,yshift=1.4pt}]   (0) to node[right] {\tiny $5$} (X3);
\draw[myarrow, transform canvas={xshift=-1.4pt,yshift=-1.4pt}] (X3) to node[left]  {\tiny $6$} (0);

\draw[myarrow, transform canvas={xshift=-1.8pt,yshift=-0.9pt}]  (X0) to node[left]  {\tiny $9$}  (2X1);
\draw[myarrow, transform canvas={xshift= 1.8pt,yshift= 0.9pt}] (2X1) to node[right] {\tiny $10$}  (X0);
\draw[myarrow, transform canvas={yshift=-2pt}]                  (X0) to node[below] {\tiny $11$} (2X2);
\draw[myarrow, transform canvas={yshift=2pt}]                  (2X2) to node[above] {\tiny $12$}  (X0);
\draw[myarrow, transform canvas={xshift= 1.8pt,yshift=-0.9pt}]  (X0) to node[right] {\tiny $13$} (2X3);
\draw[myarrow, transform canvas={xshift=-1.8pt,yshift=0.9pt}]  (2X3) to node[left]  {\tiny $14$}  (X0);

\draw[myarrow] (X1) to node[left]  {\tiny $15$} (2X1);
\draw[myarrow] (X2) to node[above] {\tiny $16$} (2X2);
\draw[myarrow] (X3) to node[right] {\tiny $17$} (2X3);

\draw[myarrow] (X0) to[out=135, in=0] node[above] {\tiny $18$}  (X1);
\draw[myarrow, bend left] (X0) to     node[below] {\tiny $19$}  (X2);
\draw[myarrow] (X0) to[out=225, in=0] node[below] {\tiny $20$}  (X3);

\draw[myarrow,transform canvas={yshift=2pt}] (0) to[out=25, in=155]  node[above] {\tiny $7$}  (X0);
\draw[myarrow,transform canvas={yshift=-2pt}] (X0) to[out=155, in=25] node[below] {\tiny $8$}  (0);

\node at (0.5,1.25) {$H$};

\end{scope}

\draw[rounded corners,lightgray] (current bounding box.south west) rectangle (current bounding box.north east);

\end{tikzpicture}
\end{aligned}
\end{align}

The E-graph $G$ in \eqref{tikz:4d} first appeared in \cite{craciun:pantea:2008}*{Section 6}, where, using Species-Reaction graph theory \cite{craciun:feinberg:2006}, it has been shown that $G$ does not admit multiple positive equilibria. Since $G$ is weakly reversible, we can conclude that for all rate constants, there exists a unique positive equilibrium.

By definition, a $\vv\beta \in \rrpp^{14}$ is in $\Feq(G)$ if and only if
\begin{align*}
    \beta_{8} - \beta_7     &= \tfrac12(\beta_1 - \beta_2 + \beta_3 - \beta_4 + \beta_5 - \beta_6), \\
    \beta_{10} - \beta_9    &= \tfrac12(\beta_1 - \beta_2), \\
    \beta_{12} - \beta_{11} &= \tfrac12(\beta_3 - \beta_4), \\
    \beta_{14} - \beta_{13} &= \tfrac12(\beta_5 - \beta_6).  
\end{align*}
The toric flux cone $\Ft(G)$ and the toric locus $\Kt(G)$ are given by
\begin{align*}
    \Ft(G) &= \{ \vv\beta \in \Feq(G) \st \beta_1 - \beta_2 = \beta_3 - \beta_4 = \beta_5 - \beta_6 = 0\}, \\
    \Kt(G) &= \{ \vv\kappa \in \rrpp^{14} \st \tfrac{\kappa_1^2}{\kappa_2^2}\tfrac{\kappa_{10}}{\kappa_9} = \tfrac{\kappa_3^2}{\kappa_4^2}\tfrac{\kappa_{12}}{\kappa_{11}} = \tfrac{\kappa_5^2}{\kappa_6^2}\tfrac{\kappa_{14}}{\kappa_{13}} = \tfrac{\kappa_7}{\kappa_8}\}.
\end{align*}
Note that the graph $H$ (depicted on the right in \eqref{tikz:4d}) is obtained from $G^{\max}$ by omitting $\0 \to 2\X_i$ for $i=1,2,3$. By \Cref{lem:rank-1}, we have $\Fdt(G,G^{\max}) = \Fdt(G,H)$.  
Following the same steps as in \Cref{subsec:square_rev,subsec:square_parallelogram,subsec:tetrahedron} (but with $H$ instead of $G^{\max}$), and solving the resulting linear quantifier elimination problem, we find that a $\vv \beta \in \Feq(G)$ is in $\Fdt(G)$ if and only if
\begin{align} \label{eq:4d}
    \beta_1 - \max(\beta_2,\tfrac12(\beta_1+\beta_2)) + \beta_3 - \max(\beta_4,\tfrac12(\beta_3+\beta_4)) + \beta_5 - \max(\beta_6,\tfrac12(\beta_5+\beta_6)) + \beta_7 \geq 0,
\end{align}
see \cite{boros_github} for more details. Albeit it might be challenging to find an explicit description of the disguised toric locus $\Kdt(G)$, it follows from \Cref{cor:Kdt_G} and the formula \eqref{eq:4d} that $\dim \Kdt(G) = 14$. For comparison, recall that $ \dim \Kt(G) = 11$ (since $G$ has a deficiency of 3). 

We conclude this section by noting that a numerical experiment shows that approximately 62.6\% of the simplex $\{\vv\kk \in \rrpp^{14} \st \sum_{i=1}^{14} \kk_i = 1\}$ belongs to $\Kdt(G)$. We have determined this by numerically computing the unique positive equilibrium for a randomly picked rate constant, and then verifying whether the corresponding flux vector satisfies \eqref{eq:4d}, see \cite{boros_github}. This example highlights the strength of the approach that we presented in this paper: the disguised toric flux cone (which is the solution of a linear problem) can give valuable information about the disguised toric locus (which is the solution of a nonlinear problem).
\section{Discussion}
\label{sec:discussion}

In this paper, we introduced a flux-based framework for analyzing the disguised toric locus of a reaction network.
Our results build upon and advance prior work \cites{craciun:jin:sorea:2024, craciun:deshpande:jin:2025a}. 
Our first main result, \Cref{thm:homeo_Psi}, establishes a homeomorphism relating the disguised toric locus $\Kdt(G)$ of a reaction network $G$ with its disguised toric flux cone $\Fdt(G)$. This result provides a practical computational approach. Since $\Fdt(G)$ is a polyhedral cone, its linear structure often allows for an explicit characterization of $\Kdt(G)$, which is typically more complex.

The flux cone $\Fdt(G)$ decomposes into flux cones $\Fdt(G,H)$, each corresponding to an E-graph $H \subseteq G^{\comp}$ that admits a vertex-balanced realization. This decomposition can nevertheless be challenging to interpret, since the number of admissible subgraphs $H$ may be very large.
Our second main result, \Cref{thm:FdtG_is_clFdtGGmax} and \Cref{cor:Kdt_G}, shows the existence of a maximal weakly reversible realization graph $G^{\max}$ such that every admissible realization $H$ is a subgraph of $G^{\max}$ and $\Fdt(G,G^{\max})$ coincides with the manifold interior of $\Fdt(G)$.
Combined with the homeomorphism in our first main result, this yields an analogous relation between $\Kdt(G,G^{\max})$ and $\Kdt(G)$.

Furthermore, \Cref{sec:algorithm} outlines how $G^{\max}$ can be computed efficiently via a linear feasibility problem. Once $G^{\max}$ is obtained, evaluating $\Fdt(G, G^{\max})$ allows us to determine $\Fdt(G)$, and consequently $\Kdt(G)$.
Finally, in \Cref{sec:examples}, we illustrated the applicability of our approach through several biologically relevant models, including the square and the parallelogram systems, the reversible Lotka–Volterra autocatalator, and a basic clock mechanism.

The approaches developed in this work suggest several directions for future research. 
One promising direction is to further explore the relationship between the topology of $\Kdt(G)$ and the polyhedral geometry of $\Fdt(G)$, with the goal of characterizing how $\Kdt(G)$ is embedded within the positive orthant, and what subgraphs $H\subseteq G^{\max}$ contribute to its boundary.
Another potential direction involves exploiting changes of coordinates. While this work focuses on the disguised toric locus, which requires two related systems to be dynamically equal, this condition can be relaxed by asking for equality up to a change of coordinates. Systems related in this way still share many dynamical properties, including stability and multistationarity. Hence, investigating the size and structure of this larger set may yield a deeper insight into the dynamics of mass-action systems.

\section*{Acknowledgments}
We acknowledge the POSTECH Center for Mathematical Machine Learning and its Applications (CM2LA) in Pohang, South Korea, for hosting a Seorabeol Research Station workshop in July 2024, where the initial steps of this research were taken.

\section*{Funding}

BB was supported by the National Research, Development and Innovation Office, Hungary, grant RRF-2.3.1-21-2022-00006. GC was supported in part by the US National Science Foundation, grant DMS-2051568. DR was supported in part by the Fulbright Program.
OH was partially funded by the European Union (Grant Agreement no.~101044561, POSALG). Views and opinions expressed are those of the authors only and do not necessarily reflect those of the European Union or European Research Council (ERC). Neither the European Union nor ERC can be held responsible for them.

\bibliographystyle{amsalpha} 
\bibliography{references}

\end{document}